\newcommand{\norm}[1]{\left\| #1 \right\|}
\newcommand{\inp}[2]{\left\langle#1,#2\right\rangle} 
\newcommand{\flr}[1]{\left\lfloor #1\right\rfloor} 
\newcommand{\ceil}[1]{\left\lceil #1\right\rceil} 
\newcommand{\abs}[1]{\left| #1 \right|}
\newcommand{\parens}[1]{\left( #1 \right)}
\newcommand{\brac}[1]{\left\{ #1 \right\}}
\newcommand{\cA}{\mathcal{A}}
\newcommand{\cF}{\mathcal{F}}
\newcommand{\cI}{\mathcal{I}}
\newcommand{\cM}{\mathcal{M}}
\newcommand{\cN}{\mathcal{N}}
\newcommand{\cO}{\mathcal{O}}
\newcommand{\cS}{\mathcal{S}}
\newcommand{\cU}{\mathcal{U}}
\newcommand{\mA}{\mathbf{A}}
\newcommand{\mI}{\mathbf{I}}
\newcommand{\mS}{\mathbf{S}}
\newcommand{\del}[1]{}
\newcommand{\R}{\mathbb{R}} 
\newcommand{\N}{\mathbb{N}} 
\newcommand{\eqdef}{:=} 
\newcommand{\Prob}[1]{\mathbb{P}\left(#1\right)} 
\newcommand{\Exp}[1]{{\mathbb{E}}\left[#1\right]}
\definecolor{mydarkgreen}{RGB}{39,130,67}
\definecolor{mydarkred}{RGB}{192,47,25}
\definecolor{mydarkorange}{RGB}{39,130,67}
\definecolor{bgcolor}{rgb}{0.8,1,1}
\newcommand{\green}{\color{mydarkgreen}}
\newcommand{\red}{\color{mydarkred}}
\newcommand{\algname}[1]{{\sf #1}}
\newcommand{\algnametiny}[1]{{\sf #1}}
\newcommand{\algnamelarge}[1]{{\sf #1}} 
\newcommand{\Teq}{t^*}
\newenvironment{protocol}[1][htb]{%
    \renewcommand{\ALG@name}{Protocol}
   \begin{algorithm}[#1]%
  }{\end{algorithm}}
\newtheorem{assumption}{Assumption}
\newtheorem{lemma}{Lemma}
\newtheorem{theorem}{Theorem}
\theoremstyle{plain}
\newtheorem{remark}[theorem]{Remark}
\theoremstyle{definition}
\newtheorem{definition}[theorem]{Definition}
\definecolor{codegreen}{rgb}{0,0.6,0}
\definecolor{codegray}{rgb}{0.5,0.5,0.5}
\definecolor{codepurple}{rgb}{0.58,0,0.82}
\definecolor{backcolour}{rgb}{0.95,0.95,0.92}
\lstdefinestyle{mystyle}{
    backgroundcolor=\color{backcolour},   
    commentstyle=\color{codegreen},
    keywordstyle=\color{magenta},
    numberstyle=\tiny\color{codegray},
    stringstyle=\color{codepurple},
    basicstyle=\ttfamily\footnotesize,
    breakatwhitespace=false,         
    breaklines=true,                 
    captionpos=b,                    
    keepspaces=true,                 
    numbers=left,                    
    numbersep=5pt,                  
    showspaces=false,                
    showstringspaces=false,
    showtabs=false,                  
    tabsize=2
}
\newcommand\mybar{\kern1pt\rule[-\dp\strutbox]{1pt}{\baselineskip}\kern1pt}
\title{Freya PAGE: First Optimal Time Complexity for Large-Scale Nonconvex Finite-Sum Optimization with Heterogeneous Asynchronous Computations}
\author{%
  Alexander Tyurin \\
  KAUST\thanks{King Abdullah University of Science and Technology, Thuwal, Saudi Arabia},\,\,AIRI\thanks{AIRI, Moscow, Russia},\,\,Skoltech\thanks{Skolkovo Institute of Science and Technology, Moscow, Russia} \\
  \And
  Kaja Gruntkowska \\
  KAUST\footnotemark[1] \\
  \And
  Peter Richt\'{a}rik \\
  KAUST\footnotemark[1] \\
}
\begin{document}

\maketitle

\begin{abstract}

    In practical distributed systems, workers are typically not homogeneous, and due to differences in hardware configurations and network conditions, can have highly varying processing times. We consider smooth nonconvex finite-sum (empirical risk minimization) problems in this setup and introduce a new parallel method, \algname{Freya PAGE}, designed to handle arbitrarily heterogeneous and asynchronous computations. By being robust to ``stragglers'' and adaptively ignoring slow computations, \algname{Freya PAGE} offers significantly improved \emph{time complexity} guarantees compared to all previous methods, including \algname{Asynchronous SGD}, \algname{Rennala SGD}, \algname{SPIDER}, and \algname{PAGE}, while requiring weaker assumptions. The algorithm relies on novel generic stochastic gradient collection strategies with theoretical guarantees that can be of interest on their own, and may be used in the design of future optimization methods. Furthermore, we establish a lower bound for smooth nonconvex finite-sum problems in the asynchronous setup, providing a fundamental time complexity limit. This lower bound is tight and demonstrates the optimality of \algname{Freya PAGE} in the large-scale regime, i.e., when $\sqrt{m} \geq n,$ where $n$ is \# of workers, and $m$ is \# of data samples.

\end{abstract}

\section{Introduction}\label{sec:intro}

In real-world distributed systems used for large-scale machine learning tasks, it is common to encounter device heterogeneity and variations in processing times among different computational units. These can stem from GPU computation delays, disparities in hardware configurations, network conditions, and other factors, resulting in different computational capabilities and speeds across devices \citep{chen2016revisiting, tyurin2023optimal}. As a result, some clients may execute computations faster, while others experience delays or even fail to participate in the training altogether.

Due to the above reasons, we aim to address the challenges posed by device heterogeneity in the context of solving finite-sum nonconvex optimization problems of the form
\begin{align}\label{eq:problem}
    \textstyle \min\limits_{x\in \R^d} \brac{f(x) \eqdef \frac{1}{m} \sum\limits_{i=1}^m f_i(x)},
\end{align}
where $f_i: \R^d \rightarrow \R$ can be viewed as the loss of a machine learning model $x$ on the $i$\textsuperscript{th} example in a training dataset with $m$ samples. Our goal is to find an $\varepsilon$-stationary point, i.e., a (possibly random) point $\hat{x}$ such that $\mathbb{E}[\|\nabla f(\hat{x})\|^2] \leq \varepsilon$.
We focus on the homogeneous distributed setup:
\begin{itemize}
    \item there are $n$ \emph{workers}/\emph{clients}/\emph{devices} able to work in parallel,
    \item each worker has access to stochastic gradients $\nabla f_j$, $j \in [m]$,
    \item worker $i$ calculates $\nabla f_j(\cdot)$ in less or equal to $\tau_i \in [0, \infty]$ seconds for all $i \in [n], j \in [m].$
\end{itemize}
Without loss of generality, we assume that $\tau_1\leq\ldots\leq\tau_n$. One can think of $\tau_i \in [0, \infty]$ as an upper bound on the computation time rather than a fixed deterministic time. 
Looking ahead, iteration complexity can be established even if $\tau_i = \infty$ for all $i \in [n]$ (Theorem~\ref{thm:page_rate}). We also provide results where the bounds $\{\tau_i^k\}$ are dynamic and change with every iteration $k$ (Section~\ref{sec:time_var_times}). For simplicity of presentation, however, we assume that $\tau_i^k = \tau_i$ for $i \in [n], k \geq 0$, unless explicitly stated otherwise.

\subsection{Assumptions}\label{sec:assumptions}

We adopt two weak assumptions, which are standard for the problem \eqref{eq:problem} \citep{fang2018spider}.
\begin{assumption}\label{as:smooth_lower_bdd}
    The function $f$ is $L_-$-smooth and lower-bounded by $f^* \in \R$.
\end{assumption}
\begin{assumption}\label{as:L+}
    $\exists \, L_+ \geq 0$ such that $\frac{1}{m} \sum\limits_{i=1}^{m} \norm{\nabla f_i(x) - \nabla f_i(y)}^2 \leq L_+^2 \norm{x - y}^2\, \forall x,y\in\R^d.$
\end{assumption}
We also consider Assumption~\ref{as:L_pm}. Note that this assumption does not restrict the class of considered functions $\{f_i\}.$
Indeed, if Assumption~\ref{as:L+} holds with $L_{+},$ then Assumption~\ref{as:L_pm} holds with some $L_{\pm} \leq L_{+}$. If one only wants to rely on Assumptions~\ref{as:smooth_lower_bdd} and \ref{as:L+}, it is sufficient to take $L_{\pm} = L_{+}$. However, Assumption~\ref{as:L_pm} enables us to derive sharper rates, since $L_{\pm}$ can be small or even $0$, even if $L_-$ and $L_{+}$ are large \citep{szlendak2021permutation,tyurin2022sharper,kovalev2022optimal}.
\begin{assumption}[Hessian variance \citep{szlendak2021permutation}]\label{as:L_pm}
    There exists $L_{\pm} \geq 0$ such that
    \begin{align*}
        \textstyle \frac{1}{m} \sum\limits_{i=1}^m \norm{\nabla f_i(x) - \nabla f_i(y) - \left(\nabla f(x) - \nabla f(y)\right)}^2 \leq L^2_{\pm} \norm{x-y}^2
        \qquad \forall x, y \in \R^d.
    \end{align*}
\end{assumption}

\subsection{Gradient oracle complexities}\label{sec:sync_vr}

Iterative algorithms are traditionally evaluated based on their \emph{gradient complexity}. Let us present a brief overview of existing theory.
The classical result of Gradient Descent (\algname{GD}) says that in the smooth nonconvex regime, the number of oracle calls needed to solve problem~\eqref{eq:problem} is $\cO(m\varepsilon^{-1})$ because \algname{GD} converges in $\cO(\varepsilon^{-1})$ iterations, and calculates the full gradient $\nabla f = \nicefrac{1}{m} \sum_{i=1}^m \nabla f_i$ in each iteration.
This was improved to $\cO(m + m^{\nicefrac{2}{3}} \varepsilon^{-1})$ by several \emph{variance-reduced} methods, including \algname{SVRG} and \algname{SCSG} \citep{allen2016variance, reddi2016stochastic, lei2017non, horvath2019nonconvex}.
Since then, various other algorithms, such as \algname{SNVRG}, \algname{SARAH}, \algname{SPIDER}, \algname{SpiderBoost}, \algname{PAGE} and their variants, have been developed \citep{fang2018spider, wang2019spiderboost, nguyen2017sarah, li2021page, zhou2020stochastic, horvath2022adaptivity}. These methods achieve a gradient complexity of $\cO(m + \sqrt{m} \varepsilon^{-1})$, matching the lower bounds \citep{fang2018spider,li2021page}.

That said, in practical scenarios, what often truly matters is the \emph{time complexity} rather than the \emph{gradient complexity} \citep{tyurin2023optimal}. Although the latter metric serves as a natural benchmark for sequential methods, it seems ill-suited in the context of parallel methods. 

\subsection{Some previous time complexities}
\label{sec:previous_compl}
Let us consider some examples to provide intuition about time complexities for problem \eqref{eq:problem}.

\textbf{\algname{GD} with $1$ worker (\algname{Hero GD}).} In principle, each worker can solve the problem on their own. Hence, one approach would be to select the fastest client (assuming it is known) and delegate the task to them exclusively.
A well-known result says that for $L_-$-smooth objective function $f$ (Assumption~\ref{as:smooth_lower_bdd}), \algname{GD} converges in $\delta^0L_-\varepsilon^{-1}$ iterations, where $\delta^0 \eqdef f(x^0)-f^*,$ and $x^0$ is the starting point. Since at each iteration the method computes $m$ gradients $\nabla f_i(\cdot)$, $i\in[m]$, the time required to find an $\varepsilon$-stationary point is $\delta^0L_-\varepsilon^{-1} \times m \tau_1$ seconds.

\textbf{\algname{GD} with $n$ workers and equal data allocation (\algname{Soviet GD}).} The above strategy leaves the remaining $n-1$ workers idle, and thus potentially useful computing resources are wasted. A common approach is to instead divide the data into $n$ equal parts and assign one such part to each worker, so that each has to compute $\nicefrac{m}{n}$ gradients (assuming for simplicity that $m$ is divisible by $n$). Since at each iteration the strategy needs to wait for the slowest worker, the total time is $\delta^0L_-\varepsilon^{-1} \times \nicefrac{m \tau_n}{n}$. Depending on the relationship between $\tau_1$ and $\nicefrac{\tau_n}{n}$, this could be more efficient or less efficient compared to \algname{Hero~GD}. This shows that the presence of stragglers can eliminate the potential speedup expected from parallelizing the training \citep{dutta2018slow}.

\textbf{\algname{SPIDER}/\algname{PAGE} with $1$ worker or $n$ workers and equal data allocation (\algname{Hero PAGE} and \algname{Soviet PAGE}).}
As mentioned in Section~\ref{sec:sync_vr}, \algname{SPIDER}/\algname{PAGE} can have better \emph{gradient complexity} guarantees than \algname{GD}. Using the result of \citet{li2021page}, the equal data allocation strategy with $n$ workers leads to the time complexity of
\begin{align}
    \label{eq:page_time}
    \textstyle T_{\textnormal{Soviet PAGE}} \eqdef \Theta\left(\tau_n \max\left\{\frac{m}{n}, 1\right\} + \tau_n \frac{\delta^0 \max\{L_-, L_{\pm}\}}{\varepsilon}\max\left\{\frac{\sqrt{m}}{n}, 1\right\} \right)
\end{align}
seconds. We refer to this method as \algname{Soviet PAGE}. In practical regimes, when $\varepsilon$ is small and $L_- \approx L_{\pm}$, this complexity can be $\sqrt{m}$ better than that of \algname{GD}. Running \algname{PAGE} on the fastest worker (which we will call \algname{Hero PAGE}), we instead get the time complexity $T_{\textnormal{Hero PAGE}}~\eqdef~\Theta\left(\tau_1 m + \tau_1 \nicefrac{\delta^0}{\varepsilon} \sqrt{m}\right).$

Given these examples, the following question remains unanswered: what is the best possible time complexity in our setting? This paper aims to answer this question.

\begin{table}
\caption{\small Comparison of the \emph{worst-case time complexity} guarantees of methods that work with asynchronous computations in the setup from Section~\ref{sec:intro} (up to smoothness constants). We assume that $\tau_{i} \in [0, \infty]$ is the bound on the times required to calculate one stochastic gradient $\nabla f_j$ by worker $i,$ $\tau_1\leq\ldots\leq\tau_n,$ and $m \geq n \log n.$ Abbr: $\delta^0 \eqdef f(x^0) - f^*,$ $m = $ \# of data samples, $n = $ \# of workers, $\varepsilon = $ error tolerance.}
\label{tbl:main}
\tiny
\centering 
\begin{adjustbox}{width=\columnwidth,center}
\begin{threeparttable}
\begin{tabular}[t]{ccc}
    \toprule
    \bf  Method & \bf Worst-Case Time Complexity & \bf Comment \\
    \midrule
    \makecell{\algnametiny{Hero GD} \, \big(\algnametiny{Soviet GD}\big)} & \makecell{$\tau_1 m \frac{\delta^0}{\varepsilon}$ \quad \big($\tau_n \frac{m}{n} \frac{\delta^0}{\varepsilon}$\big)} & Suboptimal \\
    \midrule
    \makecell{\algnametiny{Hero PAGE} \, \big(\algnametiny{Soviet PAGE}\big) \\ \citep{li2021page}} & \makecell{$\tau_1 m + \tau_1 \frac{\delta^0}{\varepsilon}\sqrt{m}$ \quad \big($\tau_n \frac{m}{n} + \tau_n \frac{\delta^0}{\varepsilon} \frac{\sqrt{m}}{n}$\big)} & Suboptimal \\
    \midrule
    \makecell{\algnametiny{SYNTHESIS} \\ \citep{liu2022synthesis}} & \makecell{---} &  \makecell{Limitations: \\ bounded gradient assumption, \\ calculates the full gradients\textsuperscript{{\color{blue}(a)}}, \\ suboptimal.\textsuperscript{{\color{blue}(b)}}} \\
    \midrule
    \makecell{\algnametiny{Asynchronous SGD} \\ \citep{koloskova2022sharper} \\ \citep{mishchenko2022asynchronous}} & \makecell{$\frac{\delta^0}{\varepsilon} \parens{\parens{\sum\limits_{i=1}^n \frac{1}{\tau_i}}^{-1} \left(\frac{\sigma^2}{\varepsilon}+n\right)}$} &  \makecell{Limitations: \\ $\sigma^2$--bounded variance assumption, \\ suboptimal when $\varepsilon$ is small.} \\
    \midrule
    \makecell{\algnametiny{Rennala SGD} \\ \citep{tyurin2023optimal}} & \makecell{$\frac{\delta^0}{\varepsilon} \min\limits_{j\in[n]} \parens{\parens{\sum\limits_{i=1}^j \frac{1}{\tau_i}}^{-1} \left(\frac{\sigma^2}{\varepsilon}+j\right)}$} &  \makecell{Limitations: \\ $\sigma^2$--bounded variance assumption, \\ suboptimal when $\varepsilon$ is small.} \\
    \midrule
    \cellcolor{bgcolor} \begin{tabular}{c}\algnametiny{Freya PAGE} \\ (Theorems~\ref{cor:nice_upper_bound} and \ref{cor:nice_upper_bound_better})\end{tabular} & \cellcolor{bgcolor} \begin{tabular}{c} $\min\limits_{j\in[n]} \parens{\parens{\sum\limits_{i=1}^j \frac{1}{\tau_i}}^{-1} (m+j)}$ \\ $\qquad\qquad\qquad + \frac{\delta^0}{\varepsilon} \min\limits_{j\in[n]} \parens{\parens{\sum\limits_{i=1}^j \frac{1}{\tau_i}}^{-1} (\sqrt{m}+j)}$\textsuperscript{{\color{blue}(c)}} \end{tabular} & \cellcolor{bgcolor} \begin{tabular}{c}Optimal in the large-scale regime, \\ i.e., $\sqrt{m} \geq n$ (see Section~\ref{sec:lower_bound_simple}) \end{tabular}\\
    \midrule
    \midrule
    \cellcolor{bgcolor} \begin{tabular}{c} Lower bound \\ (Theorem \ref{thm:lower_bound_simple}) \end{tabular} & \cellcolor{bgcolor} \begin{tabular}{c} $\min\limits_{j\in[n]} \parens{\parens{\sum\limits_{i=1}^j \frac{1}{\tau_i}}^{-1} (m+j)} $ \\ $\qquad\qquad\qquad + \frac{\delta^0}{\sqrt{m} \varepsilon} \min\limits_{j\in[n]} \parens{\parens{\sum\limits_{i=1}^j \frac{1}{\tau_i}}^{-1} (m+j)}$ \end{tabular} & \cellcolor{bgcolor} \begin{tabular}{c}--- \end{tabular}\\
    \midrule
    \multicolumn{3}{c}{\makecell{\algnametiny{Freya PAGE} has \emph{universally} better guarantees than all previous methods: the dependence on $\varepsilon$ is $\cO\left(\nicefrac{1}{\varepsilon}\right)$ (unlike \algnametiny{Rennala SGD} and \algnametiny{Asynchronous SGD}), \\ the dependence on $\{\tau_i\}$ is harmonic-like and robust to slow workers (robust to $\tau_n \to \infty$) (unlike \algnametiny{Soviet PAGE} and \algnametiny{SYNTHESIS}), \\
    the assumptions are weak, and the time complexity of \algnametiny{Freya PAGE} is optimal when $\sqrt{m} \geq n$.}}\\
    \bottomrule
    \end{tabular}
    \begin{tablenotes}
    \scriptsize
    \item [{\color{blue}(a)}] In Line $3$ of their Algorithm $3$, they calculate the full gradient, assuming that it can be done for free and not explaining how.
    \item [{\color{blue}(b)}] Their convergence rates in Theorems $1$ and $3$ depend on a bound on the delays $\Delta,$ which in turn depends on the performance of the slowest worker. Our method does not depend on the slowest worker if it is too slow (see Section~\ref{sec:discussion}), which is required for optimality.
    \item [{\color{blue}(c)}] We prove better time complexity in Theorem~\ref{thm:opt_S_main}, but this result requires the knowledge of $\{\tau_i\}$ in advance,  unlike Theorems~\ref{cor:nice_upper_bound} and \ref{cor:nice_upper_bound_better}.
\end{tablenotes}
\end{threeparttable}
\end{adjustbox}
\end{table}

\section{Contributions}

We consider the finite-sum optimization problem \eqref{eq:problem} under weak assumptions and develop a new method, \algname{Freya PAGE}. 
The method works with arbitrarily heterogeneous and asynchronous computations on the clients without making \emph{any} assumptions about the bounds on the processing times~${\tau_i}$.
We show that the \emph{time complexity} of \algname{Freya PAGE} is provably better than that of all previously proposed synchronous/asynchronous methods (Table~\ref{tbl:main}). Moreover, we prove a lower bound that guarantees optimality of \algname{Freya PAGE} in the large-scale regime ($\sqrt{m} \geq n$). 
The algorithm leverages new computation strategies, \algname{ComputeGradient} (Alg.~\ref{algorithm:ga_asynch_main_new}) and \algname{ComputeBatchDifference} (Alg.~\ref{algorithm:ga_asynch_main_diff}), which are generic and can be used in any other asynchronous method. These strategies enable the development of our new \algname{SGD} method (\algname{Freya SGD}); see Sections~\ref{sec:using_strat} and \ref{sec:freya_sgd}.
Experiments from Section~\ref{sec:experiments} on synthetic optimization problems and practical logistic regression tasks support our theoretical results.

\section{The Design of the New Algorithm}
\label{sec:design}

It is clear that to address the challenges arising in the setup under consideration and achieve optimality, a distributed algorithm has to adapt to and effectively utilize the heterogeneous nature of the underlying computational infrastructure. With this in mind, we now present a new algorithm, \algname{Freya PAGE}, that can efficiently coordinate and synchronize computations across the $n$ devices, accommodating arbitrarily varying processing speeds, while mitigating the impact of slow devices or processing delays on the overall performance of the system.

\begin{algorithm}[t]
    \caption{\algname{Freya PAGE}}
    \label{algorithm:rennala_page_gen}
    \begin{algorithmic}[1]
    \State \textbf{Parameters:} starting point $x^0 \in \R^d$, learning rate $\gamma > 0$, minibatch size $S\in\N$, probability $p \in (0, 1]$, initialization $g^0 = \nabla f(x^0)$ using ${\green \textnormal{\algname{ComputeGradient}}(x^{0})}$ \quad (Alg.~\ref{algorithm:ga_asynch_main_new})
    \For{$k = 0, 1, \ldots, K-1$}
        \State $x^{k+1} = x^k - \gamma g^k$
        \State Sample $c^k \sim \text{Bernoulli}(p)$
        \If{$c^k=1$} \hfill{\color{gray} (with probability $p$)}
            \State {\green $\nabla f(x^{k+1}) = \textnormal{\algname{ComputeGradient}}(x^{k+1})$} \hfill (Alg.~\ref{algorithm:ga_asynch_main_new}) \label{line:grad}
            \State $g^{k+1} = \nabla f(x^{k+1})$
        \Else \hfill{\color{gray} (with probability $1-p$)}
            \State {\green $\frac{1}{S} \sum\limits_{i \in \cS^k}\left(\nabla f_i(x^{k+1}) - \nabla f_i(x^{k})\right) = \textnormal{\algname{ComputeBatchDifference}}(S, x^{k+1}, x^k)$} \hfill (Alg.~\ref{algorithm:ga_asynch_main_diff}) \label{line:batch}
            \State $g^{k+1} = g^k + \frac{1}{S} \sum\limits_{i \in \cS^k}\left(\nabla f_i(x^{k+1}) - \nabla f_i(x^{k})\right)$ 
        \EndIf
    \EndFor
    \end{algorithmic}
    {\small \hspace*{0.5cm} (note): $\cS^k$ is a set of i.i.d. indices that are sampled from $[m]$, \emph{uniformly with replacement}, $\abs{\cS^k} = S$}
\end{algorithm}

\algname{Freya PAGE} is formalized in Algorithm~\ref{algorithm:rennala_page_gen}. 
The update rule is just the regular \algname{PAGE} \citep{li2021page} update: at each iteration, with some (typically small) probability $p$, the algorithm computes the full gradient $\nabla f(x^{k+1})$, and otherwise, it samples a minibatch $\cS^k$ of size $S$ and reuses the gradient estimator $g^k$ from the previous iteration, updated by the cheaper-to-compute adjustment $\frac{1}{S} \sum_{i\in \cS^k} \parens{\nabla f_i(x^{k+1}) - \nabla f_i(x^k)}$. 

Within Algorithm~\ref{algorithm:rennala_page_gen}, at each iteration we call one of two subroutines: \algname{ComputeGradient} (Alg.~\ref{algorithm:ga_asynch_main_new}, performing the low-probability step), and \algname{ComputeBatchDifference} (Alg.~\ref{algorithm:ga_asynch_main_diff}, performing the high-probability step). Let us focus on \algname{ComputeGradient}, designed to collect the full gradient: it takes a point $x$ as input and returns $\nabla f(x) = \frac{1}{m} \sum_{i=1}^{m} \nabla f_i(x).$
There exist many strategies for implementing this calculation, some of which were outlined in Section~\ref{sec:previous_compl}. The most naive one is to assign the task of calculating the whole gradient $\nabla f$ to a single worker $i$, resulting in a worst-case running time of $m \tau_i$ seconds for \algname{ComputeGradient}. Another possible strategy is to distribute the functions $\{f_i\}$ evenly among the workers; in this case, calculating $\nabla f$ takes $\tau_n \max\{\nicefrac{m}{n}, 1\}$ seconds in the worst case.

Clearly, we could do better if we \emph{knew $\{\tau_i\}$ in advance}. Indeed, let us allocate to each worker $j$ a number of functions $\{f_i\}$ inversely proportional to $\tau_j$. This strategy is reasonable -- the faster the worker, the more gradients it can compute. We can show that such a strategy finds $\nabla f$ in 
\begin{align}
    \label{eq:unieLpdtWZUm}
    \textstyle \Theta\left(\min\limits_{j\in[n]} \parens{\parens{\sum\limits_{i=1}^j \frac{1}{\tau_i}}^{-1} (m + j)}\right)
\end{align}
seconds in the worst case (see the proof of Theorem \ref{thm:bath_diff}). This complexity is better than $m \tau_1$ and $\tau_n \max\{\nicefrac{m}{n}, 1\}$ (Theorem~\ref{eq:simple_upper_bounds_for_eq_time}).
However, this approach comes with two major limitations: i) it requires knowledge of the upper bounds $\{\tau_i\}$, ii)~even if we have access to $\{\tau_i\}$, the computation environment can be adversarial: theoretically and practically, it is possible that at the beginning the first worker is the fastest and the last worker is the slowest, but after some time, their performances swap. Consequently, the first worker might end up being assigned the largest batch, despite now having the lowest performance. Thus, this strategy is not robust to time-varying speeds.

\begin{figure}[t]
\begin{minipage}[t]{0.5\textwidth}
\begin{algorithm}[H]
    \centering
    \caption{\algname{ComputeGradient}($x$)}
    \label{algorithm:ga_asynch_main_new}
    \begin{algorithmic}[1]
    \State \textbf{Input:} point $x\in\R^d$
    \State Init $g=0\in\R^d$, set $\cM = \emptyset$
    \State Broadcast $x$ to all workers
    \State For each worker $i\in[n]$, sample $j$ from $[m]$ uniformly and ask it to calculate $\nabla f_j(x)$
        \While{$\cM \neq [m]$}
            \State Wait for $\nabla f_p(x)$ from a worker
            \If{$p \in [m] \backslash \cM$}
            \State $g \gets g + \frac{1}{m} \nabla f_p(x)$
            \State Update $\cM \gets \cM \cup \{p\}$
            \EndIf
            \State Sample $j$ from $[m] \backslash \cM$ uniformly
            and ask \hspace*{0.45cm}  this worker to calculate $\nabla f_{j}(x)$ \label{line:sample_two_main}
        \EndWhile
        \State Return $g = \frac{1}{m} \sum\limits_{i=1}^{m} \nabla f_i(x)$
    \end{algorithmic}
\end{algorithm}
\end{minipage}
\hfill
\begin{minipage}[t]{0.5\textwidth}
\begin{algorithm}[H]
    \centering
    \caption{\algname{ComputeBatchDifference}($S$, $x$, $y$)}
    \label{algorithm:ga_asynch_main_diff}
    \begin{algorithmic}[1]
    \State \textbf{Input:} batch size $S\in\N$, points $x,y\in\R^d$
    \State Init $g=0\in\R^d$
    \State Broadcast $x, y$ to all workers
    \State For each worker, sample $j$ from $[m]$ uniformly and ask it to calculate $\nabla f_j(x) - \nabla f_j(y)$
        \For{$i = 1, 2, \ldots, S$}
            \State Wait for $\nabla f_p(x) - \nabla f_p(y)$ from a worker
            \State $g \gets g + \frac{1}{S} (\nabla f_p(x) - \nabla f_p(y))$
            \State Sample $j$ from $[m]$ uniformly and ask \hspace*{0.45cm}  this worker to calculate $\nabla f_j(x) - \nabla f_j(y)$
        \EndFor
        \State Return $g$
    \end{algorithmic}
\end{algorithm}
\vspace*{-0.5cm}
{\scriptsize Notes: i) the workers can aggregate $\nabla f_p$ locally, and the algorithm can call \algname{AllReduce} once to collect all calculated gradients. ii) By splitting $[m]$ into blocks, instead of one $\nabla f_p,$ we can ask the workers to calculate the sum of one block in Alg.~\ref{algorithm:ga_asynch_main_new} (and use a similar idea in Alg.~\ref{algorithm:ga_asynch_main_diff}).}
\end{minipage}
\end{figure}

\paragraph{New gradient computation strategy.} 
The key innovation of this work lies in the introduction of new computation strategies: Algorithms~\ref{algorithm:ga_asynch_main_new} and \ref{algorithm:ga_asynch_main_diff}. We start by examining Algorithm~\ref{algorithm:ga_asynch_main_new}. It first broadcasts the input $x\in\R^d$ to all workers. Then, for each worker, it samples $j$ \emph{uniformly} from~$[m]$ and asks it to calculate $\nabla f_j(x)$ (with a non-zero probability, two workers can be assigned the same computation). Then, the algorithm enters the loop and waits for any worker to finish their calculations. Once this happens, it asks this worker to compute a stochastic gradient with a new index sampled \emph{uniformly} from the set $[m] \backslash \cM$ of indices that have not yet been processed (again, it is possible to resample an index previously assigned to another worker). This continues until all indices $i\in[m]$ have been processed and the full gradient $\frac{1}{m} \sum_{i=1}^m \nabla f_i$ has been collected.
Unlike the previous strategies, our Algorithm~\ref{algorithm:ga_asynch_main_new} does not use $\{\tau_i\}$, thus being \emph{robust and adaptive to the changing compute times}. 
Furthermore, we can prove that its time complexity (almost) equals \eqref{eq:unieLpdtWZUm}:
\begin{restatable}{theorem}{THMFULLGRADTIME}\label{thm:full_grad_time}
    The expected time needed by Algorithm~\ref{algorithm:ga_asynch_main_new} to calculate $g = \frac{1}{m} \sum\limits_{i=1}^m \nabla f_i$ is at most
    \begin{align}
        \label{eq:EfEPFSGXiRSAUaS}
        \textstyle 12 \min\limits_{j\in[n]} \parens{\parens{\sum\limits_{i=1}^j \frac{1}{\tau_i}}^{-1} (m + {\min\{m,n\} \log \left(\min\{m,n\}\right)} + j)}
    \end{align}
    seconds.
\end{restatable}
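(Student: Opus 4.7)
I would split the argument into (a) a deterministic ``speed-to-completions'' conversion that upper-bounds the wall-clock time $T$ by a function of how many tasks have been completed by any prefix of workers, and (b) a coupon-collector-style bound on the expected total number of completions performed before $\cM = [m]$. Combining the two and minimizing over the prefix size $j$ yields the claimed bound.

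\textbf{Step 1 (speed bound).} Fix any $j \in [n]$ and let $T$ be the (random) wall-clock runtime of Algorithm~\ref{algorithm:ga_asynch_main_new}. Let $N_i$ be the number of gradients worker $i$ finishes during $[0,T]$, and set $N^{(j)} \eqdef \sum_{i=1}^{j} N_i$. Because each of worker $i$'s tasks takes at most $\tau_i$ seconds, $N_i \geq \lfloor T/\tau_i \rfloor \geq T/\tau_i - 1$. Summing over $i \in [j]$,
\begin{align*}
N^{(j)} \;\geq\; T \sum_{i=1}^{j}\frac{1}{\tau_i} \;-\; j,
\qquad\text{i.e.,}\qquad
T \;\leq\; \parens{\sum_{i=1}^{j}\frac{1}{\tau_i}}^{-1} \parens{N^{(j)} + j}.
\end{align*}
Taking expectations, it suffices to control $\Exp{N^{(j)}}$.

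\textbf{Step 2 (coupon collector for completions).} Call a completion \emph{useful} if the returned index $p$ lies in $[m]\setminus \cM$ at the moment of completion (so $|\cM|$ grows by one), and \emph{wasted} otherwise. Since the algorithm halts once $\cM=[m]$, exactly $m$ completions are useful, and hence $N^{(j)} \leq m + W$, where $W$ is the total number of wasted completions across all workers. The key combinatorial estimate is
\begin{align*}
\Exp{W} \;\leq\; C\cdot\min\{m,n\}\,\log\parens{\min\{m,n\}}
\end{align*}
for an absolute constant $C$. I would obtain this by partitioning the timeline into phases indexed by $k=|\cM|$: during phase $k$, every (re)assignment samples an index uniformly from $[m]\setminus \cM$, which has size $m-k$ (line~\ref{line:sample_two_main}), and at any moment at most $\min\{n, m-k\}$ distinct valid indices can simultaneously be in progress. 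A birthday-style calculation then bounds the expected number of wasted completions during phase $k$, which, summed over $k=0,\dots,m-1$, yields the $\min\{m,n\}\log\min\{m,n\}$ factor.

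\textbf{Combining and main obstacle.} Plugging the estimate of Step 2 into Step 1, for every $j \in [n]$,
\begin{align*}
\Exp{T} \;\leq\; \parens{\sum_{i=1}^{j}\frac{1}{\tau_i}}^{-1}\parens{m + C\cdot\min\{m,n\}\log\min\{m,n\} + j},
\end{align*}
and minimizing over $j$ (with all constants absorbed into the prefactor $12$) produces the stated bound. The principal difficulty is Step 2: because assignments and completions interleave asynchronously and outstanding tasks may have been drawn against strictly smaller past values of $\cM$, a naive analysis that treats the sampling as uniform over $[m]$ only yields $O(m\log m)$ wasted completions. The crucial refinement is to exploit that Algorithm~\ref{algorithm:ga_asynch_main_new} always resamples from the shrinking set $[m]\setminus \cM$, together with a careful accounting of how many workers can simultaneously pursue \emph{distinct} valid indices; this replaces the factor $m$ by $\min\{m,n\}$.
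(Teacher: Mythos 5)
Your Step 1 (converting wall-clock time to a lower bound on the number of completions by the first $j$ workers) is sound and is indeed the same speed argument used in the paper's proofs of Theorems~\ref{thm:bath_diff} and \ref{thm:compute_batch}. The gap is in Step 2: the claim $\Exp{W} \leq C\min\{m,n\}\log\min\{m,n\}$ for the \emph{total} number of wasted completions is false when $n \gg m$. Concretely, take $m=10$, $n=10000$, $\tau_i=1$ for all $i$. At time $0$ all $n$ workers are assigned indices uniformly from $[10]$; an adversarial scheduler (which is allowed, since $\tau_i$ is only an upper bound and the order in which simultaneous completions are dequeued is unspecified) can force essentially all $n$ first-round completions to be processed before the loop exits, giving $W \approx n-m = 9990$, while $\min\{m,n\}\log\min\{m,n\}\approx 23$. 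Your phase-by-phase birthday argument does not control this: once $|[m]\setminus\cM|$ is small, the number of workers simultaneously pointed at the same surviving index scales with $n$, so wasted completions scale with $n$, not with $\min\{m,n\}$.

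The paper's actual proof of Theorem~\ref{thm:compute_batch} avoids this by never trying to bound $W$ globally. Instead, for each candidate threshold $k$ (which plays the role of your prefix $j$), it declares workers $1,\dots,k$ ``fast'' and tracks not $[m]\setminus\cM$ but the smaller set
\begin{align*}
\cU \;\eqdef\; \{\text{indices in }\cS\setminus\cM\text{ not currently held by any fast worker}\}.
\end{align*}
When a fast worker is reassigned, it samples uniformly from $\cS\setminus\cM$, and the probability of hitting $\cU$ (i.e., of a ``useful'' assignment that shrinks $\cU$) is at least $\frac{|\cU|}{\min\{k,S\}+|\cU|}$, because at most $\min\{k,S\}$ elements of $\cS\setminus\cM$ are occupied by fast workers. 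This makes the number of fast-worker assignments needed to empty $\cU$ a geometric/coupon-collector sum with ratio controlled by $\min\{k,S\}$, not by $n$; summing gives the $S+\min\{k,S\}\log(\min\{k,S\})$ term. Once $\cU=\emptyset$, every remaining unfinished index is already in the hands of some fast worker, so the residual time is at most $\tau_k$; this extra $\tau_k$ is exactly what absorbs the $n\gg m$ scenario that breaks your $W$ bound. Finally the paper applies the same ``busy-worker'' speed conversion as your Step 1, but only to the fast workers, and takes the $\min$ over $k$. To repair your proof you would need to replace the global count $W$ by a quantity restricted to the first $j$ workers and measured against $\cU$ rather than $[m]\setminus\cM$, which essentially reproduces the paper's construction.
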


The result \eqref{eq:EfEPFSGXiRSAUaS} (the proof of which can be found in Section~\ref{sec:time_compl_sampling}) is slightly worse than \eqref{eq:unieLpdtWZUm} due to the extra ${\min\{m,n\} \log \left(\min\{m,n\}\right)}$ term. This term arises because a worker may be assigned a gradient $\nabla f_j(x)$ that was previously assigned to another worker (in \begin{NoHyper}Line~\ref{line:sample_two_main}\end{NoHyper} of Algorithm~\ref{algorithm:ga_asynch_main_new}). Hence, with a \emph{small} (but non-zero) probability, two workers can perform the same calculations. However, typically the number of samples $m$ is much larger than the number of workers $n.$ If we assume that $m \geq n \log n,$ which is satisfied in many practical scenarios, then the time complexity \eqref{eq:EfEPFSGXiRSAUaS} equals 
\begin{align*}
    \textstyle \Theta\left(\min\limits_{j\in[n]} \parens{\parens{\sum\limits_{i=1}^j \frac{1}{\tau_i}}^{-1} (m + j)}\right)
\end{align*}
and the term $\min\{m,n\} \log \left(\min\{m,n\}\right)$ never dominates. Since this complexity is not worse than~\eqref{eq:unieLpdtWZUm}, our strategy behaves as if it knew $\{\tau_i\}$ in advance! To simplify formulas and avoid the logarithmic term, we use the following assumption throughout the main part of this paper:
\begin{assumption}
    \label{ass:m_is_large}
    $m \geq n \log n,$ where $m$ is the \# of data samples and $n$ is the \# of workers.
\end{assumption}

We now proceed to discuss \algname{ComputeBatchDifference} (Algorithm~\ref{algorithm:ga_asynch_main_diff}), designed to compute a minibatch of stochastic gradient differences. Both Algorithms~\ref{algorithm:ga_asynch_main_new} and~\ref{algorithm:ga_asynch_main_diff} calculate sums. However the latter only waits until there are $S$ samples in the sum, where some indices in the batch may not be unique. On the other hand, Algorithm~\ref{algorithm:ga_asynch_main_new} must ensure the collection of a full batch of $m$ \emph{unique} stochastic gradients. As a result, Algorithm~\ref{algorithm:ga_asynch_main_diff} offers better complexity results and, unlike \algname{ComputeGradient}, does not suffer from suboptimal guarantees and logarithmic terms, as demonstrated in the theorem below.

\begin{restatable}{theorem}{THMBATHDIFF}\label{thm:bath_diff}
    The time needed by Algorithm~\ref{algorithm:ga_asynch_main_diff} to calculate $g$ is at most
    \begin{align}
        \label{eq:bath_diff_time}
        \textstyle 4 \min\limits_{j\in[n]} \parens{\parens{\sum\limits_{i=1}^j \frac{1}{\tau_i}}^{-1} (S + j)}
    \end{align}
    seconds.
\end{restatable}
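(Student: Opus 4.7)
The argument is a purely deterministic timing analysis; no randomness enters, since the worst-case compute time per stochastic gradient difference on worker $i$ is $\tau_i$ by assumption. The key structural observation about Algorithm~\ref{algorithm:ga_asynch_main_diff} is that every worker is kept continuously busy from time $0$ until termination: every worker receives an initial task, and each time a worker returns a result, the main loop immediately samples a fresh index and dispatches a new job to that same worker. Consequently, by any wall-clock time $T$, worker $i$ has completed at least $\lfloor T/\tau_i \rfloor$ gradient differences.

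Next, I would fix an arbitrary $j\in[n]$, restrict attention to the $j$ fastest workers, and lower-bound the total count of completed tasks by
\[
\sum_{i=1}^{n}\flr{T/\tau_i} \;\ge\; \sum_{i=1}^{j}\flr{T/\tau_i} \;\ge\; T\sum_{i=1}^{j}\frac{1}{\tau_i} - j,
\]
using $\flr{x}\ge x-1$. Since the algorithm terminates once this count reaches $S$, it suffices to pick the smallest $T$ for which $T\sum_{i=1}^{j}\frac{1}{\tau_i}-j\ge S$, namely $T=\parens{\sum_{i=1}^{j}\frac{1}{\tau_i}}^{-1}(S+j)$. Because $j$ was arbitrary, the algorithm finishes by $\min_{j\in[n]}\parens{\sum_{i=1}^{j}\frac{1}{\tau_i}}^{-1}(S+j)$ seconds (the constant $4$ in the theorem statement covers minor slack: strengthening $\flr{x}\ge x-1$ to $\flr{x}\ge x/2$ when $x\ge 2$, handling the regime $T<\tau_j$, and bounding $\tau_j$ by $j\bigl(\sum_{i=1}^j 1/\tau_i\bigr)^{-1}$, which follows from $\tau_i\le\tau_j$ for $i\le j$).

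\textbf{Main obstacle.} The only nontrivial step is justifying the ``each worker is always busy'' invariant, which underlies the $\flr{T/\tau_i}$ bound. This needs a careful reading of Algorithm~\ref{algorithm:ga_asynch_main_diff}: the event-driven \texttt{for} loop immediately dispatches a fresh index to any worker that returns, so there is no idle gap on any worker prior to the algorithm's termination. Once this invariant is nailed down, the rest is an elementary one-line inequality in $T$ followed by a minimization over $j$. Unlike Theorem~\ref{thm:full_grad_time} for \algname{ComputeGradient}, there is no coupon-collector blow-up here, because Algorithm~\ref{algorithm:ga_asynch_main_diff} samples with replacement and stops as soon as \emph{any} $S$ computations are received (duplicates allowed), so no extra logarithmic factor appears.
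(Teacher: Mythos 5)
Your proof follows the same basic skeleton as the paper's: workers are continuously busy, so by time $T$ worker $i$ has finished $\flr{T/(\cdot\tau_i)}$ pairs; restrict to the $j$ fastest workers, bound the floor, and minimize over $j$. Your choice of the elementary bound $\flr{x}\geq x-1$ is actually a cleaner route than the paper's — the paper instead fixes $j^*=\arg\min_j g(j)$, invokes Lemma~\ref{lemma:techn1} to get $\tau_i\leq g(j^*)$ for $i\leq j^*$ so that $\flr{x}\geq x/2$ applies, and this machinery is carried over mainly for consistency with the more delicate argument for \algname{ComputeGradient} (Theorem~\ref{thm:compute_batch}). With $\flr{x}\geq x-1$ none of that is needed.

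However, there is one genuine mis-reading of the setup. You state that ``the worst-case compute time per stochastic gradient difference on worker $i$ is $\tau_i$ by assumption.'' The assumption in Section~\ref{sec:intro} is that worker $i$ computes \emph{one} stochastic gradient $\nabla f_j(\cdot)$ in at most $\tau_i$ seconds; a difference $\nabla f_j(x)-\nabla f_j(y)$ requires \emph{two} gradient evaluations and so costs up to $2\tau_i$. The paper's proof accordingly uses $\flr{t/(2\tau_i)}$, and this is precisely why \algname{ComputeBatchDifference} carries the constant $4$ while \algname{ComputeBatch} (Theorem~\ref{thm:batch}, single gradients) carries only $2$. Your derivation as written would prove the bound with constant $1$; after correcting the $\tau_i\to 2\tau_i$ you get constant $2$, which still sits inside the claimed $4$, so the final statement survives — but the reason there is slack is not what you say. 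In particular the closing remarks about strengthening $\flr{x}\ge x-1$ to $\flr{x}\ge x/2$ and ``bounding $\tau_j$ by $j\parens{\sum_{i=1}^j 1/\tau_i}^{-1}$'' are confused: $\flr{x}\ge x-1$ is the bound you actually want (it needs no side condition), and the inequality runs the other way, $j\parens{\sum_{i=1}^j 1/\tau_i}^{-1}\le\tau_j$ since $\tau_i\le\tau_j$ for $i\le j$. I'd recommend replacing the entire slack discussion with the single correction that each difference costs $2\tau_i$, which accounts for the constant exactly.
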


Algorithm~\ref{algorithm:rennala_page_gen} uses \emph{uniform sampling with replacement}, implemented in Algorithm~\ref{algorithm:ga_asynch_main_diff}. However, one can modify the two algorithms slightly to support virtually \emph{any unbiased sampling} (see Section~\ref{sec:proofs_rennala_page_other}).

\paragraph{Note on asynchronicity.}

It is clear that to eliminate the need of waiting for very slow machines, some level of asynchronicity has to be injected into an algorithm for it to be efficient. \algname{Asynchronous SGD} \citep{recht2011hogwild,nguyen2018sgd,koloskova2022sharper,mishchenko2022asynchronous} takes this concept to the extreme by never synchronizing and continually overwriting the updates. Consequently, the algorithm's time complexity is suboptimal.
Conversely, imposing limitations on asynchronicity leads to optimal methods, both in our context (in the large-scale regime) and in the scenario examined by \citet{tyurin2023optimal}. \algname{Freya PAGE} seamlessly combines synchrony and asynchrony, getting the best out of the two worlds.

\section{Time Complexities and Convergence Rates}
\label{sec:theory}
Formulas \eqref{eq:unieLpdtWZUm} and \eqref{eq:EfEPFSGXiRSAUaS} will be used frequently throughout the paper. To lighten up the heavy notation, let us define the following mapping.
\begin{definition}[Equilibrium time]
\label{def:time_budget}
A mapping $\Teq \,:\, \R_{\geq 0} \times \R^n_{\geq 0} \rightarrow \R_{\geq 0}$ defined by
\begin{equation}
\begin{aligned}
    \label{eq:equil_time}
    \textstyle \Teq(S, [\bar{\tau}_i]_{i=1}^{n}) \eqdef \min\limits_{j\in[n]} \parens{\parens{\sum\limits_{i=1}^j \frac{1}{\bar{\tau}_i}}^{-1} (S+j)} \in [0, \infty]
\end{aligned}
\end{equation}
is called the \emph{equilibrium time}.
We let $\Teq(S) \equiv \Teq(S, [\tau_i]_{i=1}^{n})$ when considering $\{\tau_i\}$ from Section~\ref{sec:intro}.
\end{definition}

Returning to the algorithm, we guarantee the following iteration complexity.
\begin{restatable}[Iteration complexity]{theorem}{THMPAGERENNALAINDEPITER}
    \label{thm:page_rate}
    Let Assumptions~\ref{as:smooth_lower_bdd}, \ref{as:L+} and \ref{as:L_pm} hold. 
    Consider any minibatch size $S \in \N \eqdef \{1,2,\ldots\}$, any probability $p \in (0, 1],$ and let the stepsize be $\gamma = \left(L_- + L_{\pm} \sqrt{\frac{1-p}{p S}}\right)^{-1}$.
    Then, after
    \begin{align}\label{eq:iter_compl_indep}
        \textstyle K \geq K_{\algname{PAGE}} \eqdef \frac{2 \delta^0}{\varepsilon} \left(L_- + L_{\pm} \sqrt{\frac{1-p}{p S}}\right)
    \end{align}
    iterations of Algorithm~\ref{algorithm:rennala_page_gen}, we have $\Exp{\norm{\nabla f (\hat{x}^K)}^2} \leq \varepsilon$, where $\hat{x}^K$ is sampled uniformly at random from the iterates $\{x^0,\ldots,x^{K-1}\}$.
\end{restatable}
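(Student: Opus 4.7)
The plan is to follow the classical \algname{PAGE} analysis of \citet{li2021page}, adapted to the present algorithm and under the assumptions at hand. The crucial observation is that, regardless of how the subroutines in Algorithms~\ref{algorithm:ga_asynch_main_new} and \ref{algorithm:ga_asynch_main_diff} schedule computations, their \emph{outputs} are: (i) the exact gradient $\nabla f(x^{k+1})$ in the low-probability branch, and (ii) an unbiased estimator $\frac{1}{S}\sum_{i\in\cS^k}(\nabla f_i(x^{k+1}) - \nabla f_i(x^k))$ based on $S$ i.i.d.\ indices drawn uniformly with replacement from $[m]$ in the high-probability branch. Therefore the analysis reduces to a standard stochastic descent argument on the \algname{PAGE} recursion, and the physical time does not enter this theorem at all (which is why the statement holds even when $\tau_i=\infty$).

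First, I would use $L_-$-smoothness of $f$ together with the update $x^{k+1}=x^k-\gamma g^k$ to obtain the one-step descent inequality
\[
f(x^{k+1}) \le f(x^k) - \tfrac{\gamma}{2}\|\nabla f(x^k)\|^2 - \tfrac{\gamma}{2}(1-\gamma L_-)\|g^k\|^2 + \tfrac{\gamma}{2}\|g^k-\nabla f(x^k)\|^2,
\]
by expanding $\langle \nabla f(x^k),g^k\rangle = \tfrac{1}{2}(\|\nabla f(x^k)\|^2 + \|g^k\|^2 - \|g^k-\nabla f(x^k)\|^2)$. Next, I would derive the \algname{PAGE} variance recursion. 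Conditioning on $(x^k, g^k, x^{k+1})$, the Bernoulli switch gives, using independence of the samples in $\cS^k$ and Assumption~\ref{as:L_pm},
\[
\mathbb{E}\|g^{k+1} - \nabla f(x^{k+1})\|^2 \le (1-p)\|g^k - \nabla f(x^k)\|^2 + (1-p)\tfrac{L_\pm^2}{S}\|x^{k+1}-x^k\|^2,
\]
since $g^{k+1}=\nabla f(x^{k+1})$ with probability $p$, while on the other branch the new error equals the old error plus a mean-zero minibatch average whose second moment is $\tfrac{1}{S}$ times the per-sample Hessian variance. Substituting $\|x^{k+1}-x^k\|^2 = \gamma^2\|g^k\|^2$ makes both bounds compatible.

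Then I would combine the two via the Lyapunov function $\Phi^k \eqdef f(x^k) - f^* + \beta\|g^k - \nabla f(x^k)\|^2$, choosing $\beta$ so that the error term cancels: the identity $\beta = \tfrac{\gamma}{2} + \beta(1-p)$ forces $\beta = \tfrac{\gamma}{2p}$. The residual coefficient on $\|g^k\|^2$ becomes $-\tfrac{\gamma}{2}(1 - \gamma L_- - \gamma^2 L_\pm^2 (1-p)/(pS))$, and the prescribed stepsize $\gamma = (L_- + L_\pm\sqrt{(1-p)/(pS)})^{-1}$ is exactly the largest $\gamma$ making this quantity nonnegative (using $ab \le \tfrac{1}{2}(a^2+b^2)$ on the cross-term inside the bracket). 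With this, I get $\mathbb{E}[\Phi^{k+1}]\le \Phi^k - \tfrac{\gamma}{2}\mathbb{E}\|\nabla f(x^k)\|^2$; telescoping from $k=0$ to $K-1$, using $g^0=\nabla f(x^0)$ so that $\Phi^0=\delta^0$, and dividing by $K\gamma/2$ yields the stated bound.

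The only nonroutine point — and the one I would double-check first — is the variance identity for $\|g^{k+1}-\nabla f(x^{k+1})\|^2$ on the non-full branch: because $\cS^k$ is sampled \emph{with replacement} (not a random subset of unique indices), the summands $\nabla f_{i_s}(x^{k+1})-\nabla f_{i_s}(x^k) - (\nabla f(x^{k+1})-\nabla f(x^k))$ are i.i.d.\ mean-zero, which is what produces the clean $1/S$ factor and what makes Assumption~\ref{as:L_pm} the right sharper substitute for Assumption~\ref{as:L+}. Everything else is algebra and a telescoping sum, so this step is the heart of the argument.
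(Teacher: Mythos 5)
Your proof is essentially correct and reconstructs the standard \algname{PAGE} Lyapunov argument from scratch. The paper, by contrast, simply \emph{cites} Theorem~6 of \citet{tyurin2022sharper} (with the ``Uniform With Replacement'' parameters from their Table~1), because the update rule of \algname{Freya PAGE} is identical to that of \algname{PAGE}, and the asynchronous subroutines only affect time complexity, not the distributional properties of $g^{k+1}$. So the paper buys brevity by delegation, while your proof is self-contained and makes explicit exactly where each assumption enters: $L_-$-smoothness in the one-step descent, and the crucial with-replacement i.i.d.\ structure of $\cS^k$ together with Assumption~\ref{as:L_pm} in the variance recursion $\Exp{\|g^{k+1}-\nabla f(x^{k+1})\|^2}\le(1-p)\|g^k-\nabla f(x^k)\|^2+(1-p)\tfrac{L_\pm^2}{S}\|x^{k+1}-x^k\|^2$. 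The Lyapunov construction $\Phi^k=f(x^k)-f^*+\tfrac{\gamma}{2p}\|g^k-\nabla f(x^k)\|^2$, cancellation of the error terms, and telescoping with $\Phi^0=\delta^0$ (since $g^0=\nabla f(x^0)$) all check out and reproduce exactly \eqref{eq:iter_compl_indep}.

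One small imprecision: you describe $\gamma=\bigl(L_-+L_{\pm}\sqrt{(1-p)/(pS)}\bigr)^{-1}$ as ``the largest $\gamma$'' making $1-\gamma L_- -\gamma^2 L_{\pm}^2(1-p)/(pS)$ nonnegative. That is not quite true: substituting this $\gamma$ yields the strictly positive value $\tfrac{ab}{(a+b)^2}$ with $a=L_-$, $b=L_{\pm}\sqrt{(1-p)/(pS)}$, whereas the actual largest admissible stepsize is the positive root $\tfrac{-a+\sqrt{a^2+4b^2}}{2b^2}\ge\tfrac{1}{a+b}$. The prescribed stepsize is simply a clean sufficient choice obtained via $\tfrac{1}{a+b}\le\min\{\tfrac{1}{a},\tfrac{1}{b}\}$-type bounds, not the sharp threshold. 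This does not affect the validity of your argument, since nonnegativity of the coefficient on $\|g^k\|^2$ is all you need, but the wording should be corrected.
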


Theorem~\ref{thm:page_rate} states that the iteration complexity is the same as in the optimal \algname{PAGE} method \citep{li2021page}. Note that we can guarantee convergence even if the upper bounds $\{\tau_i\}$ are unknown or infinite (as long as there exists some worker that can complete computations within a finite time).

We now derive time complexity guarantees. With probability $p$, the workers need to supply to the algorithm $m$ stochastic gradients at each of the $m$ data samples, which by Theorem \ref{thm:full_grad_time} can be done in at most $\Teq(m)$ seconds (up to a log factor). Otherwise, they compute $S$ differences of stochastic gradients, which by Theorem \ref{thm:bath_diff} takes at most $\Teq(S)$ seconds (up to a constant factor).
The resulting time complexity is given in the theorem below.
\begin{restatable}[Time complexity with free parameters $p$ and $S$]{theorem}{THMPAGERENNALATIME}
    \label{thm:page_time_compl}
    Consider the assumptions and the parameters from Theorem~\ref{thm:page_rate}, plus Assumption~\ref{ass:m_is_large}. The expected time complexity of Algorithm~\ref{algorithm:rennala_page_gen} is at most
    \begin{equation}
    \begin{aligned}\label{eq:compl_p_S_indep}
        &\textstyle T(p,S, [\tau_i]_{i=1}^n) \eqdef 12 \cdot \Teq(m, [\tau_i]_{i=1}^n) \\
        &\textstyle \quad + \frac{48 \delta^0}{\varepsilon} \parens{L_- + L_{\pm} \sqrt{\frac{1-p}{p S}}} \times 
        \brac{p \cdot \Teq(m, [\tau_i]_{i=1}^n) + (1-p) \cdot \Teq(S, [\tau_i]_{i=1}^n)}.
    \end{aligned}
    \end{equation}
\end{restatable}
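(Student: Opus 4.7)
The plan is to combine three previously established results: the iteration-complexity bound from Theorem~\ref{thm:page_rate}, and the per-call time complexities of the two subroutines, Theorems~\ref{thm:full_grad_time} and~\ref{thm:bath_diff}. With the stepsize $\gamma = \parens{L_- + L_{\pm}\sqrt{(1-p)/(pS)}}^{-1}$ fixed as in Theorem~\ref{thm:page_rate}, we know that $K \eqdef \lceil K_{\algname{PAGE}} \rceil$ iterations of Algorithm~\ref{algorithm:rennala_page_gen} suffice to reach $\Exp{\norm{\nabla f(\hat{x}^K)}^2}\le\varepsilon$. In the regime of interest $K_{\algname{PAGE}}\ge 1$, so $K \le 2K_{\algname{PAGE}}$; this is the source of the extra factor of $2$ that turns the natural coefficient $24\delta^0/\varepsilon$ into the $48\delta^0/\varepsilon$ appearing in~\eqref{eq:compl_p_S_indep}.

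Next, I would bound the wall-clock time of each phase of the method in expectation. The initialization computes $g^0 = \nabla f(x^0)$ via a single call to \algname{ComputeGradient}, contributing at most $12\,\Teq(m,[\tau_i]_{i=1}^n)$ seconds in expectation: indeed Theorem~\ref{thm:full_grad_time} yields an upper bound with the extra $\min\{m,n\}\log\min\{m,n\}$ term, which under Assumption~\ref{ass:m_is_large} is absorbed into $m$ up to an absolute constant (this is the only place where $m\ge n\log n$ is invoked). For a generic iteration $k$, I would condition on $c^k \sim \text{Bernoulli}(p)$: when $c^k=1$, the body calls \algname{ComputeGradient}, whose expected runtime is at most $12\,\Teq(m,[\tau_i]_{i=1}^n)$; when $c^k=0$, it calls \algname{ComputeBatchDifference}, whose runtime is at most $4\,\Teq(S,[\tau_i]_{i=1}^n)$ \emph{deterministically} by Theorem~\ref{thm:bath_diff}. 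Marginalizing over $c^k$ gives an expected per-iteration runtime of at most $12 p \,\Teq(m,[\tau_i]_{i=1}^n) + 4(1-p)\,\Teq(S,[\tau_i]_{i=1}^n) \le 12\brac{p\,\Teq(m,[\tau_i]_{i=1}^n) + (1-p)\,\Teq(S,[\tau_i]_{i=1}^n)}$.

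Finally, by linearity of expectation over the (deterministic) $K$ iterations, the total expected wall-clock time is at most
\[
12\,\Teq(m,[\tau_i]_{i=1}^n) + 12 K \brac{p\,\Teq(m,[\tau_i]_{i=1}^n) + (1-p)\,\Teq(S,[\tau_i]_{i=1}^n)},
\]
and substituting $K \le 2K_{\algname{PAGE}} = \tfrac{4\delta^0}{\varepsilon}\parens{L_- + L_{\pm}\sqrt{(1-p)/(pS)}}$ yields \eqref{eq:compl_p_S_indep}.

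The only conceptual subtlety — and hence the main (mild) obstacle — is ensuring that the in-expectation guarantee of Theorem~\ref{thm:full_grad_time} composes correctly with the Bernoulli randomness of $c^k$ across the $K$ iterations. This is clean because (i) linearity of expectation does not require independence, (ii) the randomness of $c^k$ and the internal sampling in the two subroutines are independent across iterations, and (iii) Assumption~\ref{ass:m_is_large} eliminates the logarithmic slack in Theorem~\ref{thm:full_grad_time}. All other steps are direct algebraic substitution, so the proof reduces to careful bookkeeping of constants.
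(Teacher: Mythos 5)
Your approach is essentially the same as the paper's: combine the iteration complexity $K_{\algname{PAGE}}$ from Theorem~\ref{thm:page_rate} with the per-call time bounds of Theorems~\ref{thm:full_grad_time} and~\ref{thm:bath_diff}, marginalize over the Bernoulli variable $c^k$, sum by linearity of expectation, and use Assumption~\ref{ass:m_is_large} to suppress the $\min\{m,n\}\log\min\{m,n\}$ term. Two remarks on the constants. First, the absorption under Assumption~\ref{ass:m_is_large} gives $m + \min\{m,n\}\log\left(\min\{m,n\}\right) + j \le 2(m+j)$, so Theorem~\ref{thm:full_grad_time} yields an expected runtime for \algname{ComputeGradient} of at most $24\,\Teq(m,[\tau_i]_{i=1}^n)$, not $12\,\Teq(m,[\tau_i]_{i=1}^n)$ as you assert; this is the bound the paper's proof uses. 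Second, the paper does not invoke the ceiling estimate $K \le 2K_{\algname{PAGE}}$ --- it simply multiplies $K_{\algname{PAGE}}$ by the per-iteration expected cost. Your undercount in the full-gradient bound and your extra ceiling factor of $2$ happen to cancel, so your final expression still matches~\eqref{eq:compl_p_S_indep}, but the intermediate claims are each off by a factor of two in opposite directions. The rest (independence of $c^k$ from the subroutines' internal randomness, the deterministic bound in Theorem~\ref{thm:bath_diff}, that Assumption~\ref{ass:m_is_large} is needed only for the logarithmic term) is correct and matches the paper's argument.
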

The first term comes from the preprocessing step, where the full gradient is calculated to obtain $g^0 = \nabla f(x^0)$. Here, we use Assumption~\ref{ass:m_is_large} that $m \geq n \log n.$ The result \eqref{eq:compl_p_S_indep} is valid even without this assumption, but at the cost of extra logarithmic factors.

\subsection{Optimal parameters $S$ and $p$}\label{sec:opt_params}

The time complexity \eqref{eq:compl_p_S_indep} depends on two free parameters, $S \in \N$ and $p \in (0, 1].$ The result below (following from Theorems~\ref{thm:opt_p_indep} and \ref{thm:opt_S}) determines their optimal choice.

\begin{restatable}[Main result]{theorem}{THMOPTPSMAIN}
    \label{thm:opt_S_main}
    Consider the assumptions and parameters from Theorem~\ref{thm:page_rate}, plus Assumption~\ref{ass:m_is_large}. Up to a constant factor, the time complexity \eqref{eq:compl_p_S_indep} is at least $T([\tau_i]_{i=1}^n) \eqdef \Teq(m, [\tau_i]_{i=1}^n)$
    \begin{align}
        \label{eq:best_time_comp}
        \hspace{-0.5cm}\textstyle +\frac{\delta^0}{\varepsilon}  \min\Bigg\{L_- \Teq(m, [\tau_i]_{i=1}^n), \min\limits_{S \in \N} \underbrace{\textstyle \left[L_- \Teq(S, [\tau_i]_{i=1}^n) + L_{\pm} \frac{\sqrt{\Teq(m, [\tau_i]_{i=1}^n) \Teq(S, [\tau_i]_{i=1}^n)}}{\sqrt{S}}\right]}_{F(S) \eqdef } \Bigg\},
    \end{align}
    and this lower bound is achieved with $S^* = \arg\min\limits_{S \in \N} F(S)$ and $p^* = p^*(S^*),$ where
    \begin{align*}
        \textstyle p^*(S) = \begin{cases}
            1, & \textstyle \textnormal{if } \textstyle L_- \Teq(m, [\tau_i]_{i=1}^n) \leq L_- \Teq(S, [\tau_i]_{i=1}^n) + L_{\pm} \frac{\sqrt{\Teq(m, [\tau_i]_{i=1}^n)\Teq(S, [\tau_i]_{i=1}^n)}}{\sqrt{S}}, \\
            \textstyle \frac{\Teq(S, [\tau_i]_{i=1}^n)}{\Teq(m, [\tau_i]_{i=1}^n)}, & \textnormal{otherwise}.
        \end{cases}
    \end{align*}
\end{restatable}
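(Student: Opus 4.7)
My plan is to minimize the time complexity \eqref{eq:compl_p_S_indep} in two nested stages: first minimize over $p \in (0, 1]$ for a fixed $S \in \N$, and then minimize the resulting expression over $S$. To lighten the notation, set $A := \Teq(m, [\tau_i]_{i=1}^n)$ and $B := \Teq(S, [\tau_i]_{i=1}^n)$, so that, up to the factor $48\delta^0/\varepsilon$ and the additive preprocessing term $12 A$, the function to minimize is
\[
g(p, S) \;=\; \Bigl(L_- + L_{\pm}\sqrt{\tfrac{1-p}{pS}}\,\Bigr)\bigl(pA + (1-p)B\bigr).
\]

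\emph{Step~1 (inner minimization in $p$).} I would split on the sign of $A - B$. When $A \le B$, both $pA + (1-p)B = B + p(A-B)$ and $\sqrt{(1-p)/(pS)}$ are nonincreasing on $(0, 1]$, so the optimum is $p = 1$ with value $g(1, S) = L_- A$, matching the $p^*(S) = 1$ branch in the theorem. When $A > B$, the candidate $p = B/A \in (0, 1]$ yields $pA + (1-p)B = 2B - B^2/A \le 2B$ and $\sqrt{(1-p)/(pS)} = \sqrt{(A-B)/(BS)} \le \sqrt{A/(BS)}$, hence $g(B/A, S) \le 2L_- B + 2L_{\pm}\sqrt{AB/S} = 2 F(S)$. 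For the matching universal lower bound I would substitute $u := \sqrt{(1-p)/p} \in [0, \infty)$, which rewrites
\[
g(p, S) \;=\; \Bigl(L_- + \tfrac{L_{\pm}}{\sqrt{S}}\, u\Bigr) \cdot \frac{A + u^2 B}{1 + u^2},
\]
and then split on $u \le 1$ versus $u \ge 1$, further splitting the latter at $u = \sqrt{A/B}$. These cases extract either a constant fraction of $L_- A$ (from the $L_-$ factor when $u$ is small, using $(A+u^2B)/(1+u^2) \ge A/2$) or a constant fraction of $L_- B + L_{\pm}\sqrt{AB/S}$ (from the combined factor near $u = \sqrt{A/B}$), giving $g(p, S) \gtrsim \min\{L_- A, F(S)\}$ uniformly. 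Combining the two directions yields $\min_{p \in (0,1]} g(p, S) = \Theta\bigl(\min\{L_- A, F(S)\}\bigr)$, and identifying which branch is active recovers the stated formula for $p^*(S)$.

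\emph{Step~2 (outer minimization in $S$).} Since $L_- \Teq(m)$ does not depend on $S$, the outer minimization factors through the $\min$:
\[
\min_{S \in \N}\min\bigl\{L_-\Teq(m),\, F(S)\bigr\} \;=\; \min\!\Bigl\{L_-\Teq(m),\; \min_{S \in \N} F(S)\Bigr\},
\]
so the optimal batch size is $S^* = \arg\min_{S \in \N} F(S)$. Adding back the preprocessing term $\Teq(m, [\tau_i]_{i=1}^n)$ and absorbing universal constants into the "up to a constant factor" claim produces exactly the expression \eqref{eq:best_time_comp}, and $p^* = p^*(S^*)$ is read off from the Step~1 case split.

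\emph{Main obstacle.} The hardest part will be the matching lower bound in Step~1: the nonlinear coupling between $\sqrt{(1-p)/(pS)}$ and $pA + (1-p)B$ admits no clean closed-form minimizer (setting $g'(p) = 0$ leads to a quartic in $u$), so the regime split in $u$ must be carried out carefully in order to recover \emph{both} summands of $F(S) = L_- B + L_{\pm}\sqrt{AB/S}$ up to absolute constants, with no leftover dependence on $A/B$ or $S$ leaking into the constant. A minor secondary point is admissibility of the candidate $p = B/A$: it lies in $(0,1]$ exactly when $A \ge B$, which is precisely the regime in which the $F(S)$ branch of the outer minimum can be active, so the two branches of $p^*(S)$ are automatically consistent with the case split used in the proof.
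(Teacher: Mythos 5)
Your proof is correct and follows essentially the same route as the paper's own argument (Theorems~\ref{thm:opt_p_indep} and \ref{thm:opt_S}): both proceed by nested minimization over $p$ then $S$, both split at $p = \nicefrac{1}{2}$ to isolate the $L_-\Teq(m)$ branch, both use AM--GM to extract the $L_{\pm}\sqrt{\Teq(m)\Teq(S)/S}$ term when $p < \nicefrac{1}{2}$, and both take $p = \Teq(S)/\Teq(m)$ for achievability. Your reparameterization via $u = \sqrt{(1-p)/p}$ and the anticipated further split at $u = \sqrt{A/B}$ are cosmetic; the latter is in fact unnecessary, since for all $u \geq 1$ the two inequalities $A + u^2 B \geq 2u\sqrt{AB}$ and $u^2/(1+u^2) \geq \nicefrac{1}{2}$ already recover both summands of $F(S)$ with absolute constants.
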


Result \eqref{eq:best_time_comp} is the best possible time complexity that can be achieved with the \algname{Freya PAGE} method.
Unfortunately, the final time complexity has non-trivial structure, and the optimal parameters depend on $\{\tau_i\}$ in the general case. If we have access to all parameters and times $\{\tau_i\},$ then \eqref{eq:best_time_comp}, $S^*,$ and $p^*$ can be computed efficiently. Indeed, the main problem is to find $\min_{S \in \N} F(S),$ which can be solved, for instance, by using the bisection method, because $L_- \Teq(S, [\tau_i]_{i=1}^n)$ is non-decreasing and $L_{\pm} \sqrt{\Teq(m, [\tau_i]_{i=1}^n) \Teq(S, [\tau_i]_{i=1}^n)} / \sqrt{S}$ is non-increasing in $S$.

\subsection{Optimal parameters $S$ and $p$ in the large-scale regime }\label{sec:opt_params_large_data}

Surprisingly, we can significantly simplify the choice of the optimal parameters $S$ and $p$ in the large-scale regime, when $\sqrt{m} \geq n$. This is a weak assumption, since typically the number of data samples is much larger than the number of workers.

\begin{restatable}[Main result in the large-scale regime]{theorem}{THEOREMSIMPLETIMECOMPLEXITYCHOICE}\label{cor:nice_upper_bound}
    Consider the assumptions and parameters from Theorem~\ref{thm:page_rate}, plus Assumption~\ref{ass:m_is_large}. Up to a constant factor and smoothness constants, if $\sqrt{m} \geq n,$ then the optimal choice of parameters in~\eqref{eq:compl_p_S_indep} is $S^* = \ceil{\sqrt{m}}$ and $p^* = \nicefrac{1}{\sqrt{m}}$. For this choice, the expected time complexity of Algorithm~\ref{algorithm:rennala_page_gen} is at most
    \begin{align}
        \label{eq:time_comp}
        \textstyle T(\nicefrac{1}{\sqrt{m}},\sqrt{m}, [\tau_i]_{i=1}^n) = 12 \Teq(m, [\tau_i]_{i=1}^n) + \frac{192 \delta^0 \max\{L_-, L_{\pm}\}}{\varepsilon} \Teq(\sqrt{m}, [\tau_i]_{i=1}^n)
    \end{align}
    seconds. The iteration complexity with $S = \ceil{\sqrt{m}}$ and $p = \nicefrac{1}{\sqrt{m}}$ is $K_{\algname{PAGE}} \leq \nicefrac{4 \delta^0 \max\{L_-, L_{\pm}\}}{\varepsilon}$.
\end{restatable}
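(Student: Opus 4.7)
The result is essentially a specialization of the general bound~\eqref{eq:compl_p_S_indep} from Theorem~\ref{thm:page_time_compl} to the choice $p=1/\sqrt m$, $S=\lceil\sqrt m\rceil$, combined with a simple monotonicity argument that collapses every occurrence of $\Teq(m)$ on the iteration side of the bound into $\Teq(\sqrt m)$ in the large-scale regime. The final optimality claim is then obtained by comparing the resulting upper bound to the lower envelope supplied by Theorem~\ref{thm:opt_S_main}.

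\textbf{Step 1: iteration complexity and the smoothness factor.} With $S\geq\sqrt m$ and $p=1/\sqrt m$, I first observe that $\sqrt{(1-p)/(pS)}=\sqrt{1-1/\sqrt m}\leq 1$, so the stepsize constant $L_-+L_{\pm}\sqrt{(1-p)/(pS)}$ is upper bounded by $L_-+L_{\pm}\leq 2\max\{L_-,L_{\pm}\}$. This already gives $K_{\algname{PAGE}}\leq 4\delta^0\max\{L_-,L_{\pm}\}/\varepsilon$ directly from~\eqref{eq:iter_compl_indep}.

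\textbf{Step 2: the key inequality $\Teq(m)\leq\sqrt m\cdot\Teq(\sqrt m)$.} Writing $A_j=(\sum_{i=1}^{j}1/\tau_i)^{-1}$ and letting $j^\star\in[n]$ attain the minimum in $\Teq(\sqrt m)$, by definition of the min I have $\Teq(m)\leq A_{j^\star}(m+j^\star)$, so $\Teq(m)/\sqrt m\leq A_{j^\star}(\sqrt m+j^\star/\sqrt m)$. The large-scale assumption $n\leq\sqrt m$ yields $j^\star/\sqrt m\leq 1\leq j^\star$, and therefore $\Teq(m)/\sqrt m\leq A_{j^\star}(\sqrt m+j^\star)=\Teq(\sqrt m)$. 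A similar $2$-competitive bound $\Teq(\lceil\sqrt m\rceil)\leq 2\,\Teq(\sqrt m)$ follows from monotonicity and the elementary inequality $\lceil\sqrt m\rceil+j\leq 2(\sqrt m+j)$ plugged inside the min. Combined, these two estimates yield $p\,\Teq(m)+(1-p)\,\Teq(S)\leq 3\,\Teq(\sqrt m)$.

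\textbf{Step 3: assemble and certify optimality.} Substituting the bounds of Steps~1 and 2 into~\eqref{eq:compl_p_S_indep} gives the claimed upper bound $12\,\Teq(m)+\frac{192\,\delta^0\max\{L_-,L_{\pm}\}}{\varepsilon}\,\Teq(\sqrt m)$ after absorbing the small residual numerical factor. For optimality up to a constant, I invoke Theorem~\ref{thm:opt_S_main}: evaluating $F(S)=L_-\Teq(S)+L_{\pm}\sqrt{\Teq(m)\Teq(S)}/\sqrt S$ at $S=\sqrt m$ and reusing Step~2 yields $F(\sqrt m)\leq 2\max\{L_-,L_{\pm}\}\,\Teq(\sqrt m)$, so the minimum $F$ attains in~\eqref{eq:best_time_comp} is already $\lesssim\max\{L_-,L_{\pm}\}\,\Teq(\sqrt m)$, matching the upper bound. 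This also pins down the corresponding $p^\star(\sqrt m)\asymp 1/\sqrt m$ via the second case of the formula for $p^\star(S)$ in Theorem~\ref{thm:opt_S_main}. The only non-mechanical step is Step~2: the inequality $\Teq(m)\leq\sqrt m\,\Teq(\sqrt m)$ is what allows the potentially large quantity $\Teq(m)$ to drop out of the iteration-cost side of the bound, and it relies essentially on $\sqrt m\geq n$, since without this hypothesis the minimizer $j^\star\leq n$ could exceed $\sqrt m$, making $j^\star/\sqrt m>1$ and breaking the comparison.
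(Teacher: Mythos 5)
Your derivation of the \emph{upper bound}~\eqref{eq:time_comp} and of the iteration complexity is sound and follows essentially the same route as the paper: plug $p=1/\sqrt m$, $S=\lceil\sqrt m\rceil$ into~\eqref{eq:compl_p_S_indep}, bound $L_-+L_{\pm}\sqrt{(1-p)/(pS)}\leq 2\max\{L_-,L_{\pm}\}$, and collapse $p\,\Teq(m)+(1-p)\,\Teq(S)$ to $O(\Teq(\sqrt m))$. (Your extra factor handling the ceiling yields $288$ rather than the stated $192$, but this is the same looseness the paper itself commits when it writes $T(\nicefrac{1}{\sqrt m},\sqrt m,\cdot)$ rather than $T(\nicefrac{1}{\sqrt m},\lceil\sqrt m\rceil,\cdot)$; not a real issue.)

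Two genuine problems remain. First, your explanation of where $\sqrt m\geq n$ is used is wrong. You claim Step~2 ``relies essentially on $\sqrt m\geq n$, since without this hypothesis the minimizer $j^\star\leq n$ could exceed $\sqrt m$ \ldots breaking the comparison.'' In fact the inequality $\Teq(m)\leq\sqrt m\,\Teq(\sqrt m)$ holds for \emph{all} $m\geq 1$: you only need $j^\star/\sqrt m\leq j^\star$, which is automatic from $\sqrt m\geq 1$; your intermediate step $j^\star/\sqrt m\leq 1\leq j^\star$ is an unnecessary strengthening. Consistent with this, the theorem statement itself asserts~\eqref{eq:time_comp} for all $m\geq n\log n$, not just when $\sqrt m\geq n$. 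The role of $\sqrt m\geq n$ is entirely on the \emph{optimality} side, where it supplies the reverse inequality $\Teq(m)\gtrsim\sqrt m\,\Teq(\sqrt m)$.

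Second, and more seriously, Step~3 does not actually establish optimality. You show $F(\sqrt m)\leq 2\max\{L_-,L_{\pm}\}\Teq(\sqrt m)$, i.e.\ that the lower envelope in~\eqref{eq:best_time_comp} is $\lesssim\Teq(m)+\nicefrac{\delta^0}{\varepsilon}\max\{L_-,L_{\pm}\}\Teq(\sqrt m)$. But optimality of $(p^*,S^*)$ means $T(p^*,S^*)\lesssim\min_{p,S}T(p,S)\asymp$~\eqref{eq:best_time_comp}, and your argument only shows both sides are $\lesssim$ the same quantity $X$; it never shows~\eqref{eq:best_time_comp}$\gtrsim X$. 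What is missing is the lower bound $\min_S F(S)\gtrsim\max\{L_-,L_{\pm}\}\Teq(\sqrt m)$, which requires a case analysis on $S\lessgtr\sqrt m$ together with the reverse estimate $\Teq(m)\geq\tfrac{\sqrt m}{2}\Teq(\sqrt m)$ (this is where $\sqrt m\geq n$ genuinely enters). The paper sidesteps this by invoking Theorem~\ref{thm:rpage_psopt_indep} with $L_{\pm}=L_-$, whose proof carries out exactly this two-sided bound on $t'(S)$; your proposal replaces that invocation with a one-sided check and therefore leaves the optimality claim unproved.
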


We cannot guarantee that $S = \ceil{\sqrt{m}}$ and $p = \nicefrac{1}{\sqrt{m}}$ is the \emph{optimal} pair when $\sqrt{m} < n,$ but it is a valid choice for all $m \geq 1.$ Note that \eqref{eq:time_comp} is true if $m \geq n \log n$, and it is true up to a log factor if $m < n \log n$. 
In light of Theorem~\ref{thm:rpage_psopt_indep}, we can further refine Theorem~\ref{cor:nice_upper_bound} if the ratio $\nicefrac{L_{\pm}}{L}$ is known:

\begin{theorem}[Main result in the large-scale regime using the ratio $\nicefrac{L_{\pm}}{L}$]
    \label{cor:nice_upper_bound_better}
    Consider the assumptions and parameters from Theorem~\ref{thm:page_rate}, plus Assumption~\ref{ass:m_is_large}. The expected time complexity of Algorithm~\ref{algorithm:rennala_page_gen} is at most $\Theta(\Teq(m, [\tau_i]_{i=1}^n) + \nicefrac{\delta^0 L_-}{\varepsilon} \times \Teq(\min\{\max\{\ceil{\nicefrac{L_{\pm} \sqrt{m}}{L_-}}, 1\}, m\}, [\tau_i]_{i=1}^n))$ seconds, where $S = \min\{\max\{\ceil{\nicefrac{L_{\pm} \sqrt{m}}{L_-}}, 1\}, m\}$ and $p = \nicefrac{S}{m}.$
\end{theorem}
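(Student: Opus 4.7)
The plan is to specialize the time complexity formula~\eqref{eq:compl_p_S_indep} from Theorem~\ref{thm:page_time_compl} to the stated choice $S = \min\{\max\{\lceil L_\pm\sqrt{m}/L_-\rceil,1\},m\}$ and $p = S/m$, and then simplify using two basic properties of the equilibrium time $\Teq$. Substituting these parameters yields
\[
T(p,S,[\tau_i]) \;=\; 12\,\Teq(m) + \frac{48\delta^0}{\varepsilon}\left(L_- + L_\pm\sqrt{\frac{1-p}{pS}}\right)\left(p\,\Teq(m) + (1-p)\,\Teq(S)\right),
\]
so it suffices to bound the two factors in the second term by $\cO(L_-)$ and $\cO(\Teq(S))$, respectively.

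The key technical observation is a scaling property of $\Teq$: for every real $c \geq 1$,
\[
\Teq(cS,[\tau_i]) \;\leq\; c\,\Teq(S,[\tau_i]).
\]
This follows directly from Definition~\ref{def:time_budget}: fixing the minimizing index $j^\star$ of $\Teq(S)$ and using $cS + j^\star \leq c(S+j^\star)$ shows that the candidate value at $j^\star$ for $\Teq(cS)$ is at most $c\,\Teq(S)$. Applied with $c = m/S \geq 1$, this yields $(S/m)\,\Teq(m) \leq \Teq(S)$, so the bracket term obeys
\[
p\,\Teq(m) + (1-p)\,\Teq(S) \;\leq\; \Teq(S) + \Teq(S) \;=\; 2\,\Teq(S).
\]

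To bound the smoothness factor, note $p = S/m$ implies $(1-p)/(pS) = (m-S)/S^2 \leq m/S^2$, hence $L_\pm\sqrt{(1-p)/(pS)} \leq L_\pm\sqrt{m}/S$. A short case split on the clipping in $S$ then shows this quantity is at most $L_-$: if $L_\pm\sqrt{m}/L_- \leq 1$ then $S = 1$ and $L_\pm\sqrt{m} \leq L_-$; if $1 < L_\pm\sqrt{m}/L_- \leq m$ then by construction $S \geq L_\pm\sqrt{m}/L_-$; and if $L_\pm\sqrt{m}/L_- > m$ then $S = m$ forces $p = 1$, so the $L_\pm$ term vanishes entirely. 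In all three cases $L_- + L_\pm\sqrt{(1-p)/(pS)} \leq 2L_-$, giving
\[
T(p,S,[\tau_i]) \;\leq\; 12\,\Teq(m) + \frac{192\,\delta^0 L_-}{\varepsilon}\,\Teq(S),
\]
which matches the claimed complexity.

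The calculation itself is short; the main obstacle is handling the three regimes defined by the clipping of $S$ uniformly. The middle regime is immediate, but the $S = m$ case is initially suspicious because the naive bound $L_\pm/\sqrt{m}$ can exceed $L_-$ there; one has to notice that $p = 1$ in this regime, which makes the $L_\pm$ contribution to the \algname{PAGE} recursion disappear. The scaling inequality $\Teq(cS) \leq c\,\Teq(S)$ is the one non-algebraic ingredient, but it follows immediately from inspecting the defining $\min_j$ formula in Definition~\ref{def:time_budget}.
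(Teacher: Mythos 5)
Your proof is correct, and it takes a more direct route than the paper. The paper offers no explicit proof for this theorem; it simply says the result follows ``in light of Theorem~\ref{thm:rpage_psopt_indep},'' whose three cases are $n \leq L_\pm\sqrt{m}/L_- \leq m$, $L_\pm\sqrt{m}/L_- \leq 1$, and $L_\pm\sqrt{m}/L_- \geq m$ --- leaving the regime $1 < L_\pm\sqrt{m}/L_- < n$ uncovered. Your argument sidesteps that optimization machinery entirely: you substitute the prescribed $(S,p)$ into the explicit complexity formula~\eqref{eq:compl_p_S_indep}, then bound the bracket via the scaling observation $\Teq(cS,[\tau_i]) \leq c\,\Teq(S,[\tau_i])$ for $c \geq 1$ (which, applied with $c = m/S$, gives $p\,\Teq(m) \leq \Teq(S)$), and bound the smoothness prefactor by $2L_-$ via a case split on the clipping that defines $S$. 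The scaling inequality is not stated anywhere in the paper but is elementary, and your treatment of the $S = m$ corner (noticing that $p = 1$ kills the $L_\pm$ term) is the right fix for a step that would otherwise fail. The upshot: your proof is self-contained, covers the regime the paper's cited theorem omits, and does not require identifying the \emph{optimal} $(S,p)$ --- only verifying that the stated choice achieves the bound, which is all the theorem actually claims.
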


For brevity reasons, we will continue working with the result from Theorem~\ref{cor:nice_upper_bound} in the main part of this paper. Note that the optimal parameters do not depend on $\{\tau_i\}$, and can be easily calculated since the number of functions $m$ is known in advance. Hence, our method is \emph{fully adaptive to changing and heterogeneous compute times of the workers}.

Even if the bounds are unknown and $\tau_i = \infty$ for all $i \in [n],$ our method converges after $4 \delta^0 \max\{L_-, L_{\pm}\} / \varepsilon$ iterations, and calculates the optimal number of stochastic gradients equal to $\cO(m +\sqrt{m} \delta^0 \max\{L_-, L_{\pm}\}/\varepsilon)$.

\subsection{Discussion of the time complexity}
\label{sec:discussion}
Let us use Definition~\ref{def:time_budget} and unpack the second term in the complexity \eqref{eq:time_comp}, namely,
\begin{align*}
    \textstyle \frac{\delta^0 \max\{L_-, L_{\pm}\}}{\varepsilon} \min\limits_{j\in[n]} \parens{\parens{\sum\limits_{i=1}^j \frac{1}{\tau_i}}^{-1} (\sqrt{m} + j)}.
\end{align*}
A somewhat similar expression involving $\min_{j \in [n]}$ and harmonic means was obtained by \citet{tyurin2023optimal,tyurin2024shadowheart} for minimizing expectation under the bounded variance assumption. The term $\delta^0 \max\{L_-, L_{\pm}\} / \varepsilon$ is standard in optimization \citep{nesterov2018lectures,lan2020first} and describes the difficulty of the problem \eqref{eq:problem}. The term $\min_{j\in[n]} ((\sum_{i=1}^j \nicefrac{1}{\tau_i})^{-1} (\sqrt{m} + j))$ represents the average time of one iteration and has some nice properties. For instance, if the last worker is slow and $\tau_n \approx \infty,$ then 
    $\min_{j\in[n]} (\cdots) = \min_{j\in[{\red n - 1}]} (\cdots),$
so the complexity effectively ignores it. Moreover, if $j^*$ is an index that minimizes $\min_{j \in [n]}(\cdots),$ then $\min_{j\in[n]} ((\sum_{i=1}^j \nicefrac{1}{\tau_i})^{-1} (\sqrt{m} + j)) = ((\sum_{i=1}^{j^*} \nicefrac{1}{\tau_i})^{-1} (\sqrt{m} + j^*)).$ The last formula, again, does not depend on the slowest workers $\{j^* + 1, \dots, n\}$, which are automatically excluded from the time complexity expression. The same reasoning applies to the term $\Teq(m, [\tau_i]_{i=1}^n).$ Let us now consider some extreme examples which are meant to shed some light on our time complexity result \eqref{eq:time_comp}:

\begin{restatable}{example}{EXAMPLESAMEMAIN}[Equally Fast Workers]
    \label{example:samespeed_main}
    Suppose that the upper bounds on the processing times are equal, i.e., $\tau_j = \tau$ for all $j \in [n]$. Then 
    \begin{align*}
        \textstyle T(\nicefrac{1}{\sqrt{m}},\sqrt{m}, [\tau_i]_{i=1}^n) = \Theta\left(\tau \max\left\{\frac{m}{n}, 1\right\} + \tau \frac{\delta^0 \max\{L_-, L_{\pm}\}}{\varepsilon}\max\left\{\frac{\sqrt{m}}{n}, 1\right\} \right).
    \end{align*}
\end{restatable}
The complexity in Example~\ref{example:samespeed_main} matches that in \eqref{eq:page_time}, which makes sense since \algname{Soviet PAGE} is a reasonable method when $\{\tau_i\}$ are equal. Note that the reduction happens without prior knowledge of $\{\tau_i\}$.
\begin{restatable}{example}{EXAMPLEINFFASTMAIN}[Infinitely Fast Worker]
    \label{example:inffast_main}
    If $\tau_1=0$, then $T(\nicefrac{1}{\sqrt{m}},\sqrt{m}, [\tau_i]_{i=1}^n) = 0.$
\end{restatable}
\begin{restatable}{example}{EXAMPLEINFSLOWMAIN}[Infinitely Slow Workers]
    \label{example:infslow_main}
    If $\tau_j=\infty \, \forall j \in[n]$, then $T(\nicefrac{1}{\sqrt{m}},\sqrt{m}, [\tau_i]_{i=1}^n) = \infty.$
\end{restatable}
\begin{restatable}{example}{EXAMPLESLOWMAIN}[Extremely Slow Workers]
    \label{example:slow_main}
    Suppose that the times $\tau_j < \infty$ are fixed $\forall j \leq j_{B}$ and $\tau_j\geq B$ $\forall j > j_{B}$ for some $B$ large enough. Then $T(\nicefrac{1}{\sqrt{m}},\sqrt{m}, [\tau_i]_{i=1}^n) = T(\nicefrac{1}{\sqrt{m}},\sqrt{m}, [\tau_i]_{i=1}^{j_{B}}).$
\end{restatable}

Example \ref{example:slow_main} says that the workers whose processing time is too large are \emph{ignored}, 
which supports the discussion preceding the examples.

\subsection{Dynamic bounds}\label{sec:time_var_times}

It turns out that the guarantees from Theorem~\ref{cor:nice_upper_bound} can be generalized to the situation where the compute times $\{\tau_i\}$ are allowed to dynamically change throughout the iterations. Consider the $k$\textsuperscript{th} iteration of Algorithm~\ref{algorithm:rennala_page_gen} and assume that worker $i$ calculates one $\nabla f_j$ in at most $\tau_i^k \in [0, \infty]$ seconds $\forall i \in [n], j \in [m].$ 
Clearly, $\max_{k \geq -1}\tau_i^k \leq \tau_i$ $\forall i \in [n]$ (where $\{\tau_i^{-1}\}$ are upper bounds from the preprocessing step), but $\tau_i^k$ can be arbitrarily smaller than $\tau_i$ (possibly $\tau_i^k = 0$ and $\tau_i = \infty$).

\begin{theorem}
    Consider the assumptions and parameters from Theorem~\ref{thm:page_rate}, plus Assumption~\ref{ass:m_is_large}. Up to a constant factor, the expected time complexity of Algorithm~\ref{algorithm:rennala_page_gen} with $S = \ceil{\sqrt{m}}$ and $p = \nicefrac{1}{\sqrt{m}}$ is at most
\begin{align}
    \label{eq:QpDKSb}
    \textstyle \Teq(m, [\tau_{\pi_{-1,i}}^{-1}]_{i=1}^n) + \sum\limits_{k=0}^{\ceil{4 \delta^0 \max\{L_-, L_{\pm}\} / \varepsilon}} \Teq(\sqrt{m}, [\tau_{\pi_{k,i}}^k]_{i=1}^n),
\end{align}
where $\pi_{k,\cdot}$ is a permutation such that $\tau_{\pi_{k,1}}^k \leq \dots \leq \tau_{\pi_{k,n}}^k$ for all $k \geq -1.$
(This theorem follows from Theorem~\ref{thm:page_time_compl_var} with the chosen parameters).
\end{theorem}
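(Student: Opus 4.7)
The plan is to reduce this dynamic-times result to the static-times time complexity bound by applying the per-iteration subroutine guarantees (Theorems~\ref{thm:full_grad_time} and~\ref{thm:bath_diff}) locally to each iteration with its own compute times $\{\tau_i^k\}$, and then summing. The key observation is that the iteration complexity bound $K \leq K_{\algname{PAGE}}$ from Theorem~\ref{thm:page_rate} depends only on the stepsize, smoothness constants, $\delta^0$, $\varepsilon$, $p$ and $S$; it does \emph{not} depend on the compute times $\{\tau_i^k\}$. Therefore, plugging in $S = \lceil\sqrt{m}\rceil$ and $p = 1/\sqrt{m}$, the iteration bound $K \leq \lceil 4\delta^0 \max\{L_-, L_{\pm}\}/\varepsilon\rceil$ carries over verbatim to the dynamic-times setting.

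Next I would analyze the time spent per iteration. The preprocessing step calls \algname{ComputeGradient} on times $\{\tau_i^{-1}\}$; since Theorems~\ref{thm:full_grad_time} and~\ref{thm:bath_diff} assume sorted times $\tau_1\leq\ldots\leq\tau_n$, I would apply them to the sorted sequence $[\tau_{\pi_{-1,i}}^{-1}]_{i=1}^n$ and use Assumption~\ref{ass:m_is_large} to discard the $\min\{m,n\}\log\min\{m,n\}$ term, producing the first summand $\Teq(m, [\tau_{\pi_{-1,i}}^{-1}]_{i=1}^n)$. For iteration $k \geq 0$, the randomness $c^k \sim \text{Bernoulli}(p)$ selects one of two branches: with probability $p$, \algname{ComputeGradient} is called (cost at most $O(\Teq(m, [\tau_{\pi_{k,i}}^k]))$ by Theorem~\ref{thm:full_grad_time} plus Assumption~\ref{ass:m_is_large}), and with probability $1-p$, \algname{ComputeBatchDifference} is called with batch size $S=\lceil\sqrt{m}\rceil$ (cost at most $O(\Teq(\sqrt{m}, [\tau_{\pi_{k,i}}^k]))$ by Theorem~\ref{thm:bath_diff}). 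Thus the expected time of iteration $k$ is at most
\begin{equation*}
    O\!\left( p \cdot \Teq(m, [\tau_{\pi_{k,i}}^k]_{i=1}^n) + (1-p)\cdot \Teq(\sqrt{m}, [\tau_{\pi_{k,i}}^k]_{i=1}^n)\right).
\end{equation*}

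The main technical step is to show that the two terms can be combined into a single $\Teq(\sqrt{m}, \cdot)$ term. I would establish the elementary inequality $\Teq(m, [\bar\tau_i]) \leq \sqrt{m}\, \Teq(\sqrt{m}, [\bar\tau_i])$ for any sorted $\{\bar\tau_i\}$, which follows directly from Definition~\ref{def:time_budget}: if $j^\star$ achieves the minimum in $\Teq(\sqrt{m},\cdot)$, then
\begin{equation*}
    \Teq(m,[\bar\tau_i]) \leq \left(\sum_{i=1}^{j^\star} 1/\bar\tau_i\right)^{-1}(m + j^\star) \leq \sqrt{m}\left(\sum_{i=1}^{j^\star} 1/\bar\tau_i\right)^{-1}(\sqrt{m} + j^\star) = \sqrt{m}\,\Teq(\sqrt{m},[\bar\tau_i]),
\end{equation*}
using $m + j^\star \leq \sqrt{m}(\sqrt{m}+j^\star)$ for $\sqrt{m}\geq 1$. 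Combined with $p = 1/\sqrt{m}$, this yields $p\cdot\Teq(m,\cdot) \leq \Teq(\sqrt{m},\cdot)$, so each iteration contributes at most $O(\Teq(\sqrt{m}, [\tau_{\pi_{k,i}}^k]_{i=1}^n))$ expected seconds.

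Summing over $k = 0, 1, \ldots, \lceil 4\delta^0 \max\{L_-, L_{\pm}\}/\varepsilon\rceil - 1$ and adding the preprocessing cost gives the claimed bound~\eqref{eq:QpDKSb} up to a constant factor. I expect the main obstacle to be the bookkeeping around the permutations $\pi_{k,\cdot}$ and the translation of the expected per-iteration time into the stated total bound; once the inequality $\Teq(m,\cdot) \leq \sqrt{m}\,\Teq(\sqrt{m},\cdot)$ is in hand, everything else is an application of Theorems~\ref{thm:page_rate}, \ref{thm:full_grad_time} and~\ref{thm:bath_diff} at each iteration, which is essentially what the referenced Theorem~\ref{thm:page_time_compl_var} formalizes in the general $p, S$ setting.
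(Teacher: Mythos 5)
Your proposal is correct and follows essentially the same route as the paper: it is the argument behind Theorem~\ref{thm:page_time_compl_var} (apply the per-iteration time bounds of Theorems~\ref{thm:full_grad_time} and~\ref{thm:bath_diff} to the iteration-local sorted times and sum, since iteration complexity is independent of the $\tau_i^k$), specialized to $S=\lceil\sqrt{m}\rceil$, $p=1/\sqrt{m}$, and then collapsed via $p\,\Teq(m,\cdot)\leq \Teq(\sqrt{m},\cdot)$ — the same simplification the paper uses in Theorem~\ref{cor:nice_upper_bound}. The only difference is that the paper invokes Theorem~\ref{thm:page_time_compl_var} as a black box while you spell out its content inline; your inequality $\Teq(m,\cdot)\leq\sqrt{m}\,\Teq(\sqrt{m},\cdot)$ is correct.
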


Hence, our algorithm is \emph{adaptive to the dynamic compute times} $\{\tau_i^k\}.$ Let us consider an example with $2$ workers. Assume that the first worker is stable: $\tau_1^k = \tau$ for all $k \geq 0$, and the second worker is unstable: $\tau_2^0 = \tau$ is small in the first iteration, and $\tau_2^1 \approx \infty$ in the second iteration. For explanation purposes, we ignore the preprocessing term $\Teq(m, \cdot),$ which is not a factor if $\varepsilon$ is small.
Then, 
\begin{align*}
    \eqref{eq:QpDKSb} 
    = \Teq(\sqrt{m}, [\tau_{\pi_{0,1}}^0, \tau_{\pi_{0,2}}^0]) + \Teq(\sqrt{m}, [\tau_{\pi_{1,1}}^1, \tau_{\pi_{1,2}}^1]) + ... 
    = \Teq(\sqrt{m}, [\tau, \tau]) + \Teq(\sqrt{m}, [\tau]) + ...
\end{align*}
because $\Teq(\sqrt{m}, [\tau_{\pi_{1,1}}^1, \tau_{\pi_{1,2}}^1]) = \Teq(\sqrt{m}, [\tau])$ when $\tau_2^1 \approx \infty.$ The time complexity in the second iteration depends on the first (stable) worker only, which is reasonable since $\tau_2^1 \approx \infty,$ and this happens \emph{automatically}. At the same time, the first term $\Teq(\sqrt{m}, [\tau, \tau])$ depends on both workers, and this iteration will be faster because $\Teq(\sqrt{m}, [\tau, \tau]) \leq \Teq(\sqrt{m}, [\tau]).$

\subsection{Comparison with previous strategies from Section~\ref{sec:previous_compl}}
Our time complexities \eqref{eq:best_time_comp} and \eqref{eq:time_comp} are better than all known previous guarantees if $m \geq n \log n.$ In particular,
$T(\nicefrac{1}{\sqrt{m}},\sqrt{m}, [\tau_i]_{i=1}^n) \leq T_{\textnormal{Soviet PAGE}}$ (from \eqref{eq:page_time}), because $\Teq(\sqrt{m}, [\tau_i]_{i=1}^n) \lesssim \nicefrac{\sqrt{m} \tau_n}{n}$ (Theorem~\ref{eq:simple_upper_bounds_for_eq_time}). In fact, since $\lim_{\tau_n \to \infty} \Teq(\sqrt{m}, [\tau_i]_{i=1}^n) = \Teq(\sqrt{m}, [\tau_i]_{i=1}^{n-1}) < \infty$ and $\lim_{\tau_n \to \infty} = \nicefrac{\sqrt{m} \tau_n}{n} = \infty$, $T_{\textnormal{Soviet PAGE}}$ can be arbitrarily larger. We also improve on \algname{Hero PAGE} (see Remark~\ref{remark:better}).

\subsection{Comparison with existing asynchronous variance reduced methods}

Several studies have explored asynchronous variance reduced algorithms. Essentially all of them are variants of the existing synchronous methods discussed in Section \ref{sec:sync_vr} and depend on the slowest worker in every iteration. 
There have been several attempts to combine variance reduction techniques with asynchronous computations.
Perhaps the most relevant baseline is \algname{SYNTHESIS}, an asynchronous variant of \algname{SPIDER} \citep{fang2018spider} introduced by \citet{liu2022synthesis}. The obtained \emph{gradient complexity} matches that of \algname{SPIDER} in terms of dependence on $m$, but scales linearly with the bound on the time performance of the slowest worker, making it non-adaptive to slow computations. Moreover, in Line~$3$ of their Algorithm~$3$, the full gradient is calculated, assuming that it can be done for free.
Lastly, the analysis assumes the gradients to be bounded.

\section{Lower Bound}
\label{sec:lower_bound_simple}

In previous sections, we showed that \algname{Freya PAGE} converges in at most \eqref{eq:best_time_comp} or \eqref{eq:time_comp} seconds, providing better time complexity guarantees compared to all previous methods. The natural question is: how good are these time complexities, and can they be improved? In Section~\ref{sec:lower_bound_appendix}, we formalize our setup and prove Theorems~\ref{theorem:lower_bound} and \ref{theorem:second_lower_bound}, which collectively yield the following lower bound.

\begin{theorem}[Less formal version of Theorems~\ref{theorem:lower_bound} and \ref{theorem:second_lower_bound}]
    \label{thm:lower_bound_simple}
    Assume that $0 < \tau_1 \leq \dots \leq \tau_n$ and take any $L_{+}, \delta^0, \varepsilon > 0$ and $m \in \N$ such that $\varepsilon < c_1 L_{+} \delta^0.$ Then, for any (zero-respecting) algorithm, there exists a function $f$ that satisfies $f(0) - f^* \leq \delta^0$ and Assumption~\ref{as:L+}, such that it is impossible to find an $\varepsilon$--stationary point faster than in
    \begin{align}
        \label{eq:lower_bound_main}
        \textstyle \Omega\left(\Teq(m, [\tau_i]_{i=1}^n) + \frac{\delta^0 L_+}{\sqrt{m} \varepsilon} \Teq(m, [\tau_i]_{i=1}^n)\right)
    \end{align}
    seconds using uniform sampling with replacement.
\end{theorem}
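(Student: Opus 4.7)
The plan is to adapt the classical finite-sum oracle lower bound (Fang et al., SPIDER; Arjevani et al.) to the asynchronous time-complexity model by combining two ingredients: a zero-respecting query lower bound on the hard instance, and a scheduling lower bound that converts query counts into wall-clock time through the mapping $\Teq$. The second term in \eqref{eq:lower_bound_main} will come from multiplying the two, and the first will come directly from the scheduling lower bound applied once.

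For the query lower bound, I would construct a ``randomized chain'' hard instance: each $f_i$ is a component of a Nesterov-style chain function in which advancing the algorithm's iterate from coordinate $t$ to coordinate $t+1$ requires querying one specific component index $i^*_t\in[m]$, whose identity is hidden by a uniform random permutation that is unknown to the algorithm. A zero-respecting algorithm cannot make the span of its iterates include coordinate $t+1$ without sampling $i^*_t$. Because the algorithm is restricted to uniform sampling with replacement, finding $i^*_t$ takes $\Omega(m)$ samples in expectation. Scaling the instance so that $\varepsilon$-stationarity requires advancing through $T=\Omega(L_+\delta^0/(\sqrt{m}\varepsilon))$ chain steps recovers the finite-sum oracle lower bound $\Omega(m + \sqrt{m}\,L_+\delta^0/\varepsilon)$ on the total number of stochastic gradient evaluations.

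For the time piece, I would prove the key scheduling fact: in the asynchronous setup with speeds $\tau_1\le\dots\le\tau_n$, any protocol that collects $N$ stochastic gradient evaluations needs at least $\Omega(\Teq(N,[\tau_i]_{i=1}^n))$ seconds. The adversary picks worker $i$ to return its result after exactly $\tau_i$ seconds from the moment of assignment. Then in time $T$, if only the fastest $j$ workers are used, at most $T\sum_{i=1}^j 1/\tau_i$ computations finish, while using worker $j$ at all incurs a base cost of $\tau_j$; balancing yields exactly the minimum $\min_{j\in[n]}\bigl((\sum_{i=1}^j1/\tau_i)^{-1}(N+j)\bigr)$ appearing in $\Teq$. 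Applying this with $N=\Omega(m)$ to the ``unlocking'' phase of each chain step gives an $\Omega(\Teq(m,[\tau_i]_{i=1}^n))$ time lower bound per step; multiplying by $T$ chain steps yields the second term in \eqref{eq:lower_bound_main}, and applying it once to the very first progress step (which cannot occur before $\Omega(\Teq(m,\cdot))$ seconds elapse) yields the additive first term.

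The main obstacle will be the second ingredient. The classical SPIDER/Arjevani-Carmon-style lower bounds only track the \emph{number} of oracle calls and are blind to how calls are interleaved in time across heterogeneous workers. Making the scheduling argument rigorous in the face of an adaptive adversary and asynchronous, possibly overlapping, queries requires carefully fixing the ``information'' available to the algorithm as a function of wall-clock time, and showing that this information can grow no faster than the harmonic-mean rate $\sum_{i=1}^j 1/\tau_i$ used by the top-$j$ workers; the pigeonhole/minimum-over-$j$ step then produces $\Teq$. This is structurally dual to the upper-bound argument behind Theorem~\ref{thm:full_grad_time}, so I expect the cleanest proof to mirror that analysis, replacing the ``waiting for a new sample'' upper bound with a matching adversarial lower bound on arrival times.
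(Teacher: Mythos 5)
Your high-level plan correctly matches the paper's decomposition into two pieces (Theorems~\ref{theorem:lower_bound} and \ref{theorem:second_lower_bound}): a pure scheduling lower bound that gives the additive $\Teq(m)$ term, and a progress-over-$T$-chain-steps argument that gives the $\frac{\delta^0 L_+}{\sqrt{m}\varepsilon}\Teq(m)$ term. You also correctly see that the second term comes from a per-step cost of $\Omega(\Teq(m))$ multiplied by $T = \Omega(\delta^0 L_+/(\sqrt{m}\varepsilon))$ chain steps, not from a single application of $\Teq$ to $m+\sqrt{m}T$ total queries — this distinction matters, since $\Teq$ is sublinear and $(1+T)\Teq(m) \gtrsim \Teq(m(1+T))$.

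There are two genuine gaps in the proposal, one cosmetic and one substantive. The cosmetic one: you construct a hidden random permutation assigning a distinct ``magic'' index $i^*_t$ to each chain level. This machinery is unnecessary here precisely because the oracle, not the algorithm, selects the index uniformly; the paper simply concentrates all nonzero data in $f_1$ (and sets $f_i \equiv 0$ for $i\geq 2$), so the event ``a useful gradient arrived'' is literally ``the oracle drew $j=1$'', with no information-hiding argument needed. Your construction is a reasonable attempt to be robust to index-choosing algorithms, but since the statement already restricts to uniform sampling with replacement, it adds complexity without gain.

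The substantive gap is that you never confront the randomness of the per-step time. You say the unlocking phase ``takes $\Omega(m)$ samples in expectation'' and then that this ``gives an $\Omega(\Teq(m))$ time lower bound per step; multiplying by $T$ chain steps yields the second term.'' But the time to unlock level $k$ is the random variable $\min_{i\in[n]}\tau_i\eta_i^k$, where $\eta_i^k\sim\text{Geometric}(\nicefrac{1}{m})$ is worker $i$'s search length — it is \emph{not} bounded below by $\Teq(m)$ deterministically, and it has a nontrivial lower tail. An expectation bound on $\sum_{k=1}^T \min_i\tau_i\eta_i^k$ does not yield what the complexity measure \eqref{eq:lower_compl_time} requires, namely that for all $t$ up to the claimed threshold the event $\{\text{prog} < T\text{ for all }k\in S_t\}$ has probability bounded away from zero. (Markov only bounds the upper tail; reverse-Markov needs a variance bound you haven't produced; and naive ``compose my scheduling lemma with an expected query count'' runs into Jensen, since $\Teq$ is concave so $\Exp{\Teq(\eta)}\leq\Teq(\Exp{\eta})$, which is the wrong direction.) The paper closes this gap with a Chernoff argument: it computes $\Exp{\exp(-s\,\min_i\tau_i\eta_i^1)}$ by conditioning on whether $\min_i\tau_i\eta_i^1\leq t'$ and bounding $\Prob{\min_i\tau_i\eta_i^1\leq t'}\leq\nicefrac{1}{4}$ for $t'=\nicefrac{1}{8}\Teq(m)$, then showing the MGF product over $T$ i.i.d. terms forces $\Prob{\sum_k\min_i\tau_i\eta_i^k\leq t}\leq e^{t/t'-T/2}$. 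You need this (or some equivalent lower-tail concentration for sums of minima of independent geometrics) for the argument to go through; identifying it as the missing lemma is the main thing to add.
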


Comparing \eqref{eq:time_comp} and \eqref{eq:lower_bound_main}, we see that \algname{Freya PAGE} is \emph{optimal} under Assumptions~\ref{as:smooth_lower_bdd} and \ref{as:L+} in the large-scale regime ($\sqrt{m} \geq n$). Indeed, without Assumption~\ref{as:L_pm}, we have $\max\{L_-, L_{\pm}\} = \max\{L_-, L_{+}\} = L_{+}.$ Up to constant factor, \eqref{eq:time_comp} is less or equal to \eqref{eq:lower_bound_main} since
\begin{align*}
    \Teq(\sqrt{m}, [\tau_i]_{i=1}^n) 
    &= \textstyle \min\limits_{j\in[n]} \parens{\parens{\sum\limits_{i=1}^j \frac{1}{\tau_i}}^{-1} (\sqrt{m} + j)} = \frac{1}{\sqrt{m}}\min\limits_{j\in[n]} \parens{\parens{\sum\limits_{i=1}^j \frac{1}{\tau_i}}^{-1} (m + \sqrt{m} j)} \\
    &\textstyle \overset{\sqrt{m} \geq n}{\leq} \frac{2}{\sqrt{m}}\min\limits_{j\in[n]} \parens{\parens{\sum\limits_{i=1}^j \frac{1}{\tau_i}}^{-1} (m + j)} = \frac{2}{\sqrt{m}} \Teq(m, [\tau_i]_{i=1}^n).
\end{align*}
This is \emph{the first optimal method for the problem we consider}, and Theorem \ref{thm:lower_bound_simple} gives \emph{the first lower bound}.

\section{Using the Developed Strategies in Other Methods}
\label{sec:using_strat}
\algname{ComputeBatchDifference} (Algorithm~\ref{algorithm:ga_asynch_main_diff}) is a generic subroutine and can be used in other methods. 
In Section~\ref{sec:freya_sgd}, we introduce \algname{Freya SGD}, a simple algorithm with update rule $x^{k+1} = x^{k} - \gamma \times \algname{ComputeBatch}(S, x^{k})$, where $S$ is a batch size and \algname{ComputeBatch} (Algorithm~\ref{algorithm:ga_asynch_batch}) is a minor modification of \algname{ComputeBatchDifference}. Theorem~\ref{thm:rennala_sgd_indep} establishes that \algname{Freya SGD} converges in $\cO\left(\nicefrac{1}{\varepsilon} \times \Teq\parens{\nicefrac{1}{\varepsilon}, [\tau_i]_{i=1}^n}\right)$
seconds (where we only keep the dependence on $\varepsilon$ and $\{\tau_i\}$). For small~$\varepsilon$, this complexity is worse than \eqref{eq:time_comp}, but it can be better, for instance, in the interpolation regime \citep{schmidt2013fast,ma2018power}. \algname{Freya SGD} resembles \algname{Rennala SGD} \citep{tyurin2023optimal}, but unlike the latter, it is specialized to work with finite-sum problems \eqref{eq:problem} and does not require the $\sigma^2$--bounded variance assumption on stochastic gradients (which is not satisfied in our setting).

\begin{ack}
The research reported in this publication was supported by funding from King Abdullah University of Science and Technology (KAUST): i) KAUST Baseline Research Scheme, ii) Center of Excellence for Generative AI, under award number 5940, iii) SDAIA-KAUST Center of Excellence in Artificial Intelligence and Data Science. The work of A.T. was partially supported by the Analytical center under the RF Government (subsidy agreement 000000D730321P5Q0002, Grant No. 70-2021-00145 02.11.2021).
\end{ack}

\bibliography{bibliography_main}

\begin{thebibliography}{32}
\providecommand{\natexlab}[1]{#1}
\providecommand{\url}[1]{\texttt{#1}}
\expandafter\ifx\csname urlstyle\endcsname\relax
  \providecommand{\doi}[1]{doi: #1}\else
  \providecommand{\doi}{doi: \begingroup \urlstyle{rm}\Url}\fi

\bibitem[Allen-Zhu and Hazan(2016)]{allen2016variance}
Z.~Allen-Zhu and E.~Hazan.
\newblock Variance reduction for faster non-convex optimization.
\newblock In \emph{International Conference on Machine Learning}, pages 699--707. PMLR, 2016.

\bibitem[Arjevani et~al.(2022)Arjevani, Carmon, Duchi, Foster, Srebro, and Woodworth]{arjevani2022lower}
Y.~Arjevani, Y.~Carmon, J.~C. Duchi, D.~J. Foster, N.~Srebro, and B.~Woodworth.
\newblock Lower bounds for non-convex stochastic optimization.
\newblock \emph{Mathematical Programming}, pages 1--50, 2022.

\bibitem[Carmon et~al.(2020)Carmon, Duchi, Hinder, and Sidford]{carmon2020lower}
Y.~Carmon, J.~C. Duchi, O.~Hinder, and A.~Sidford.
\newblock Lower bounds for finding stationary points {I}.
\newblock \emph{Mathematical Programming}, 184\penalty0 (1):\penalty0 71--120, 2020.

\bibitem[Chen et~al.(2016)Chen, Pan, Monga, Bengio, and Jozefowicz]{chen2016revisiting}
J.~Chen, X.~Pan, R.~Monga, S.~Bengio, and R.~Jozefowicz.
\newblock Revisiting distributed synchronous {SGD}.
\newblock \emph{arXiv preprint arXiv:1604.00981}, 2016.

\bibitem[Dutta et~al.(2018)Dutta, Joshi, Ghosh, Dube, and Nagpurkar]{dutta2018slow}
S.~Dutta, G.~Joshi, S.~Ghosh, P.~Dube, and P.~Nagpurkar.
\newblock Slow and stale gradients can win the race: Error-runtime trade-offs in distributed {SGD}.
\newblock In \emph{International Conference on Artificial Intelligence and Statistics}, pages 803--812. PMLR, 2018.

\bibitem[Fang et~al.(2018)Fang, Li, Lin, and Zhang]{fang2018spider}
C.~Fang, C.~J. Li, Z.~Lin, and T.~Zhang.
\newblock {SPIDER}: Near-optimal non-convex optimization via stochastic path-integrated differential estimator.
\newblock \emph{Advances in Neural Information Processing Systems}, 31, 2018.

\bibitem[Horv{\'a}th and Richt{\'a}rik(2019)]{horvath2019nonconvex}
S.~Horv{\'a}th and P.~Richt{\'a}rik.
\newblock Nonconvex variance reduced optimization with arbitrary sampling.
\newblock In \emph{International Conference on Machine Learning}, pages 2781--2789. PMLR, 2019.

\bibitem[Horv{\'a}th et~al.(2022)Horv{\'a}th, Lei, Richt{\'a}rik, and Jordan]{horvath2022adaptivity}
S.~Horv{\'a}th, L.~Lei, P.~Richt{\'a}rik, and M.~I. Jordan.
\newblock Adaptivity of stochastic gradient methods for nonconvex optimization.
\newblock \emph{SIAM Journal on Mathematics of Data Science}, 4\penalty0 (2):\penalty0 634--648, 2022.

\bibitem[Khaled and Richt{\'a}rik(2022)]{khaled2020better}
A.~Khaled and P.~Richt{\'a}rik.
\newblock Better theory for {SGD} in the nonconvex world.
\newblock \emph{Transactions on Machine Learning Research}, 2022.

\bibitem[Kingma and Ba(2015)]{kingma2014adam}
D.~P. Kingma and J.~Ba.
\newblock Adam: A method for stochastic optimization.
\newblock \emph{International Conference on Learning Representations}, 2015.

\bibitem[Koloskova et~al.(2022)Koloskova, Stich, and Jaggi]{koloskova2022sharper}
A.~Koloskova, S.~U. Stich, and M.~Jaggi.
\newblock Sharper convergence guarantees for asynchronous {SGD} for distributed and federated learning.
\newblock \emph{Advances in Neural Information Processing Systems}, 2022.

\bibitem[Kovalev et~al.(2022)Kovalev, Beznosikov, Borodich, Gasnikov, and Scutari]{kovalev2022optimal}
D.~Kovalev, A.~Beznosikov, E.~Borodich, A.~Gasnikov, and G.~Scutari.
\newblock Optimal gradient sliding and its application to optimal distributed optimization under similarity.
\newblock \emph{Advances in Neural Information Processing Systems}, 35:\penalty0 33494--33507, 2022.

\bibitem[Lan(2020)]{lan2020first}
G.~Lan.
\newblock \emph{First-order and stochastic optimization methods for machine learning}.
\newblock Springer, 2020.

\bibitem[LeCun et~al.(2010)LeCun, Cortes, and Burges]{lecun2010mnist}
Y.~LeCun, C.~Cortes, and C.~Burges.
\newblock Mnist handwritten digit database.
\newblock \emph{ATT Labs [Online]. Available: http://yann.lecun.com/exdb/mnist}, 2, 2010.

\bibitem[Lei et~al.(2017)Lei, Ju, Chen, and Jordan]{lei2017non}
L.~Lei, C.~Ju, J.~Chen, and M.~I. Jordan.
\newblock Non-convex finite-sum optimization via {SCSG} methods.
\newblock \emph{Advances in Neural Information Processing Systems}, 30, 2017.

\bibitem[Li et~al.(2021)Li, Bao, Zhang, and Richt{\'a}rik]{li2021page}
Z.~Li, H.~Bao, X.~Zhang, and P.~Richt{\'a}rik.
\newblock {PAGE}: A simple and optimal probabilistic gradient estimator for nonconvex optimization.
\newblock In \emph{International Conference on Machine Learning}, pages 6286--6295. PMLR, 2021.

\bibitem[Liu et~al.(2022)Liu, Zhang, and Liu]{liu2022synthesis}
Z.~Liu, X.~Zhang, and J.~Liu.
\newblock Synthesis: A semi-asynchronous path-integrated stochastic gradient method for distributed learning in computing clusters.
\newblock In \emph{Proceedings of the Twenty-Third International Symposium on Theory, Algorithmic Foundations, and Protocol Design for Mobile Networks and Mobile Computing}, pages 151--160, 2022.

\bibitem[Ma et~al.(2018)Ma, Bassily, and Belkin]{ma2018power}
S.~Ma, R.~Bassily, and M.~Belkin.
\newblock The power of interpolation: Understanding the effectiveness of sgd in modern over-parametrized learning.
\newblock In \emph{International Conference on Machine Learning}, pages 3325--3334. PMLR, 2018.

\bibitem[Mishchenko et~al.(2022)Mishchenko, Bach, Even, and Woodworth]{mishchenko2022asynchronous}
K.~Mishchenko, F.~Bach, M.~Even, and B.~Woodworth.
\newblock Asynchronous {SGD} beats minibatch {SGD} under arbitrary delays.
\newblock \emph{Advances in Neural Information Processing Systems}, 2022.

\bibitem[Nemirovskij and Yudin(1983)]{nemirovskij1983problem}
A.~S. Nemirovskij and D.~B. Yudin.
\newblock \emph{Problem complexity and method efficiency in optimization}.
\newblock Wiley-Interscience, 1983.

\bibitem[Nesterov(2018)]{nesterov2018lectures}
Y.~Nesterov.
\newblock \emph{Lectures on convex optimization}, volume 137.
\newblock Springer, 2018.

\bibitem[Nguyen et~al.(2018)Nguyen, Nguyen, Dijk, Richt{\'a}rik, Scheinberg, and Tak{\'a}c]{nguyen2018sgd}
L.~Nguyen, P.~H. Nguyen, M.~Dijk, P.~Richt{\'a}rik, K.~Scheinberg, and M.~Tak{\'a}c.
\newblock {SGD} and hogwild! convergence without the bounded gradients assumption.
\newblock In \emph{International Conference on Machine Learning}, pages 3750--3758. PMLR, 2018.

\bibitem[Nguyen et~al.(2017)Nguyen, Liu, Scheinberg, and Tak{\'a}{\v{c}}]{nguyen2017sarah}
L.~M. Nguyen, J.~Liu, K.~Scheinberg, and M.~Tak{\'a}{\v{c}}.
\newblock {SARAH}: A novel method for machine learning problems using stochastic recursive gradient.
\newblock In \emph{International Conference on Machine Learning}, pages 2613--2621. PMLR, 2017.

\bibitem[Recht et~al.(2011)Recht, Re, Wright, and Niu]{recht2011hogwild}
B.~Recht, C.~Re, S.~Wright, and F.~Niu.
\newblock Hogwild!: A lock-free approach to parallelizing stochastic gradient descent.
\newblock \emph{Advances in Neural Information Processing Systems}, 24, 2011.

\bibitem[Reddi et~al.(2016)Reddi, Hefny, Sra, Poczos, and Smola]{reddi2016stochastic}
S.~J. Reddi, A.~Hefny, S.~Sra, B.~Poczos, and A.~Smola.
\newblock Stochastic variance reduction for nonconvex optimization.
\newblock In \emph{International Conference on Machine Learning}, pages 314--323. PMLR, 2016.

\bibitem[Schmidt and Roux(2013)]{schmidt2013fast}
M.~Schmidt and N.~L. Roux.
\newblock Fast convergence of stochastic gradient descent under a strong growth condition.
\newblock \emph{arXiv preprint arXiv:1308.6370}, 2013.

\bibitem[Szlendak et~al.(2021)Szlendak, Tyurin, and Richt{\'a}rik]{szlendak2021permutation}
R.~Szlendak, A.~Tyurin, and P.~Richt{\'a}rik.
\newblock Permutation compressors for provably faster distributed nonconvex optimization.
\newblock In \emph{International Conference on Learning Representations}, 2021.

\bibitem[Tyurin and Richt{\'a}rik(2023)]{tyurin2023optimal}
A.~Tyurin and P.~Richt{\'a}rik.
\newblock Optimal time complexities of parallel stochastic optimization methods under a fixed computation model.
\newblock \emph{Advances in Neural Information Processing Systems}, 2023.

\bibitem[Tyurin et~al.(2023)Tyurin, Sun, Burlachenko, and Richt{\'a}rik]{tyurin2022sharper}
A.~Tyurin, L.~Sun, K.~Burlachenko, and P.~Richt{\'a}rik.
\newblock Sharper rates and flexible framework for nonconvex sgd with client and data sampling.
\newblock \emph{Transactions on Machine Learning Research}, 2023.

\bibitem[Tyurin et~al.(2024)Tyurin, Pozzi, Ilin, and Richt{\'a}rik]{tyurin2024shadowheart}
A.~Tyurin, M.~Pozzi, I.~Ilin, and P.~Richt{\'a}rik.
\newblock Shadowheart {SGD}: Distributed asynchronous {SGD} with optimal time complexity under arbitrary computation and communication heterogeneity.
\newblock \emph{arXiv preprint arXiv:2402.04785}, 2024.

\bibitem[Wang et~al.(2019)Wang, Ji, Zhou, Liang, and Tarokh]{wang2019spiderboost}
Z.~Wang, K.~Ji, Y.~Zhou, Y.~Liang, and V.~Tarokh.
\newblock {S}pider{B}oost and momentum: Faster variance reduction algorithms.
\newblock \emph{Advances in Neural Information Processing Systems}, 32, 2019.

\bibitem[Zhou et~al.(2020)Zhou, Xu, and Gu]{zhou2020stochastic}
D.~Zhou, P.~Xu, and Q.~Gu.
\newblock Stochastic nested variance reduction for nonconvex optimization.
\newblock \emph{Journal of Machine Learning Research}, 21\penalty0 (103):\penalty0 1--63, 2020.

\end{thebibliography}
\bibliographystyle{abbrvnat}

\newpage

\appendix

\section*{APPENDIX}
\tableofcontents
\newpage

\section{Experiments}\label{sec:experiments}

We compare \algname{Freya PAGE} with \algname{Rennala SGD}, \algname{Asynchronous SGD}, and \algname{Soviet PAGE} on nonconvex quadratic optimization tasks and practical machine learning problems. The experiments were conducted in Python 3.8 with Intel(R) Xeon(R) Gold 6248 CPU @ 2.50GHz.
We developed a library that emulates the working behavior of thousands of nodes.

\subsection{Experiments with nonconvex quadratic optimization}
\label{sec:exp_quad}

\begin{figure}[H]
    \centering
    \begin{subfigure}[t]{0.4\columnwidth}
      \centering
      \includegraphics[width=\columnwidth]{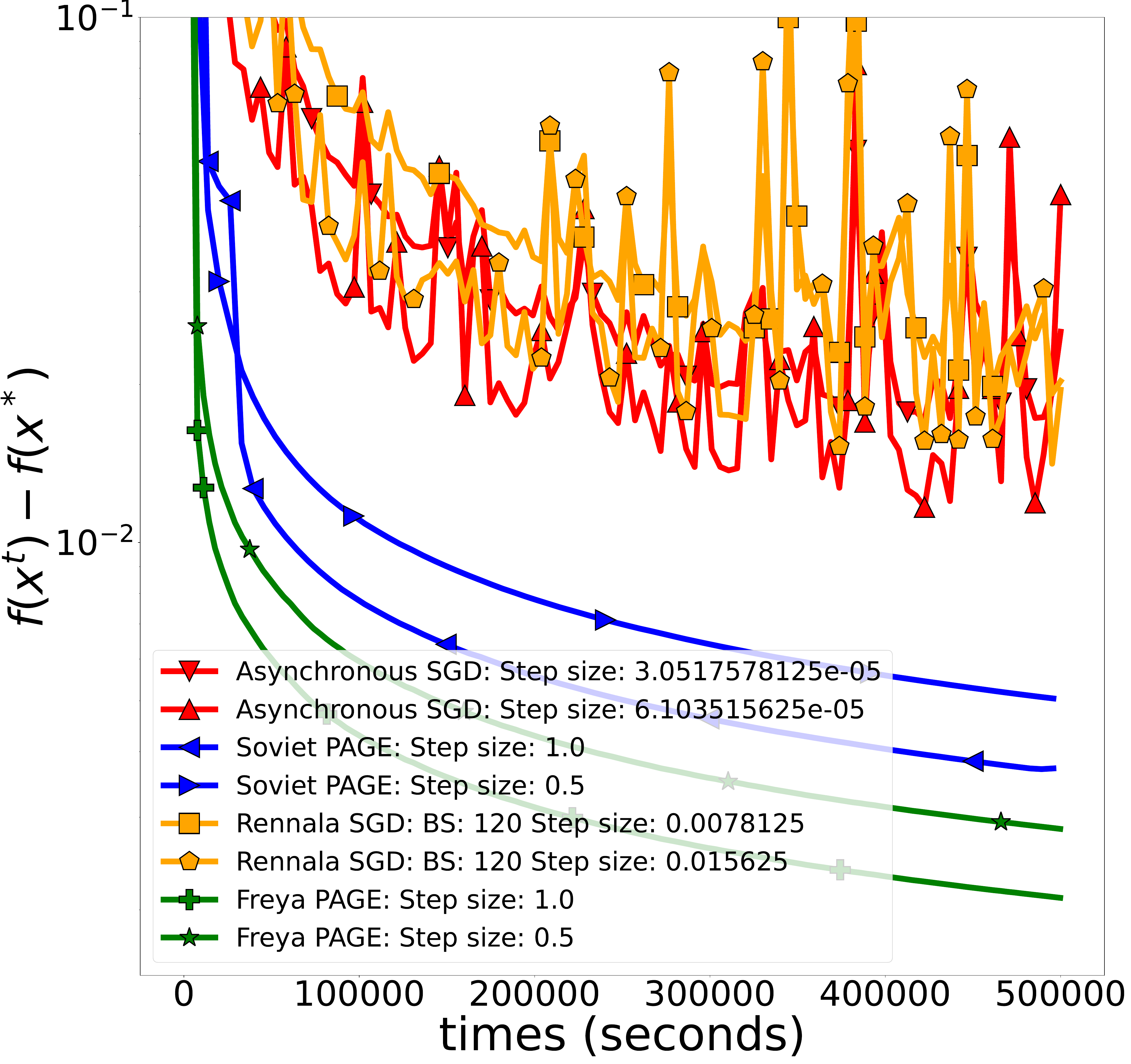}
      \caption{$n = 1000$}
    \end{subfigure}%
    \hspace{2cm}
    \begin{subfigure}[t]{0.4\columnwidth}
      \centering
      \includegraphics[width=\columnwidth]{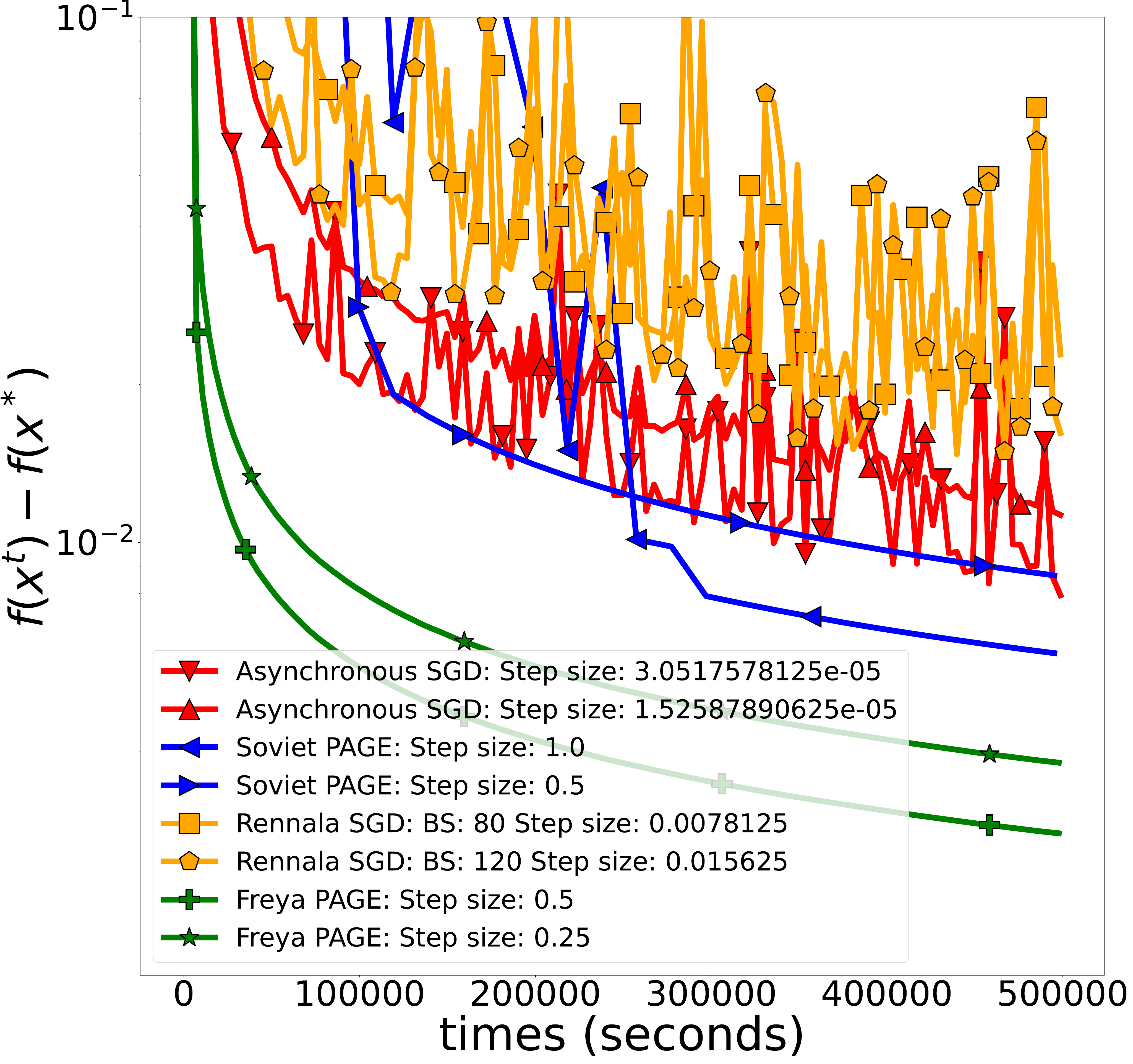}
      \caption{$n = 10000$}
    \end{subfigure}
    \caption{Experiments with nonconvex quadratic optimization tasks. We plot function suboptimality against elapsed time.}
    \label{fig:quad}
\end{figure}

In the first set of experiments, we compare the algorithms on a synthetic quadratic optimization task generated using the procedure from Section~\ref{sec:exp:setup}. To ensure robust and fair comparison, we fix the performance of each worker and emulate our setup by assuming that the $i$\textsuperscript{th} worker requires $\sqrt{i}$ seconds to calculate a stochastic gradient. For each algorithm, we fine-tune the step size from the set $\{2^i\,|\,i \in [-20, 20]\}$. Uniform sampling with replacement is used across all methods. In \algname{Freya PAGE}, we set $S = \ceil{\sqrt{m}}$ according to Theorem~\ref{cor:nice_upper_bound}. We consider $n\in\{1000, 10000\}$ and in each case plot the best run of each method. 

The results are presented in Figure~\ref{fig:quad}. It is evident that our new method, \algname{Freya PAGE}, has the best convergence performance among all algorithms considered. The convergence behavior of \algname{Rennala SGD} and \algname{Asynchronous SGD} is very noisy, and both achieve lower accuracy than \algname{Freya PAGE}. Furthermore, the gap between \algname{Freya PAGE} and \algname{Soviet PAGE} widens with increasing $n$ because \algname{Soviet PAGE} is not robust to the presence of slow workers.

\subsection{Experiments with logistic regression}

\begin{figure}[h]
    \centering
    \begin{subfigure}[t]{.4\columnwidth}
      \centering
      \includegraphics[width=\columnwidth]{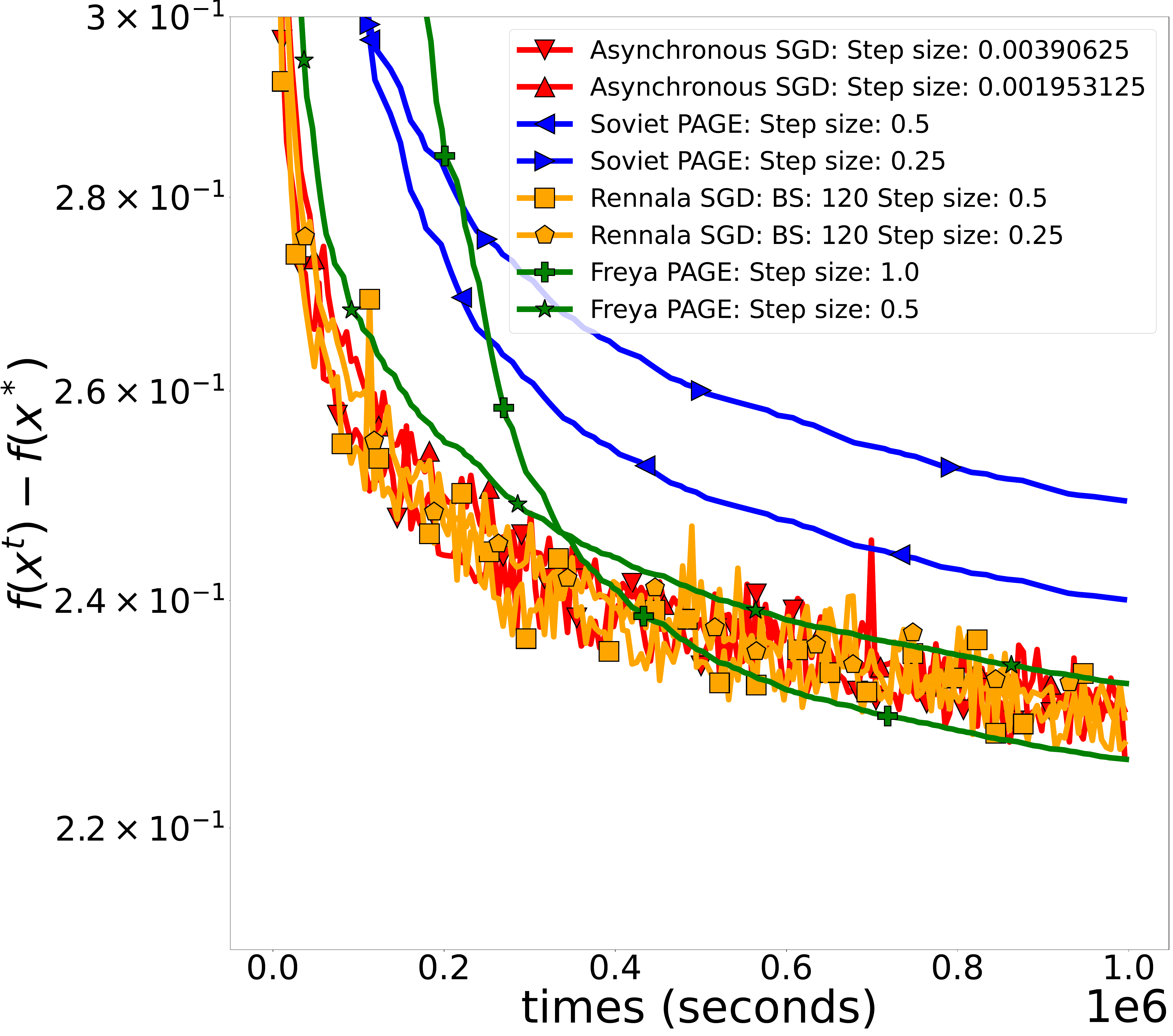}
      \caption{$n = 100$}
    \end{subfigure}%
    \hspace{2cm}
    \begin{subfigure}[t]{.4\columnwidth}
      \centering
      \includegraphics[width=\columnwidth]{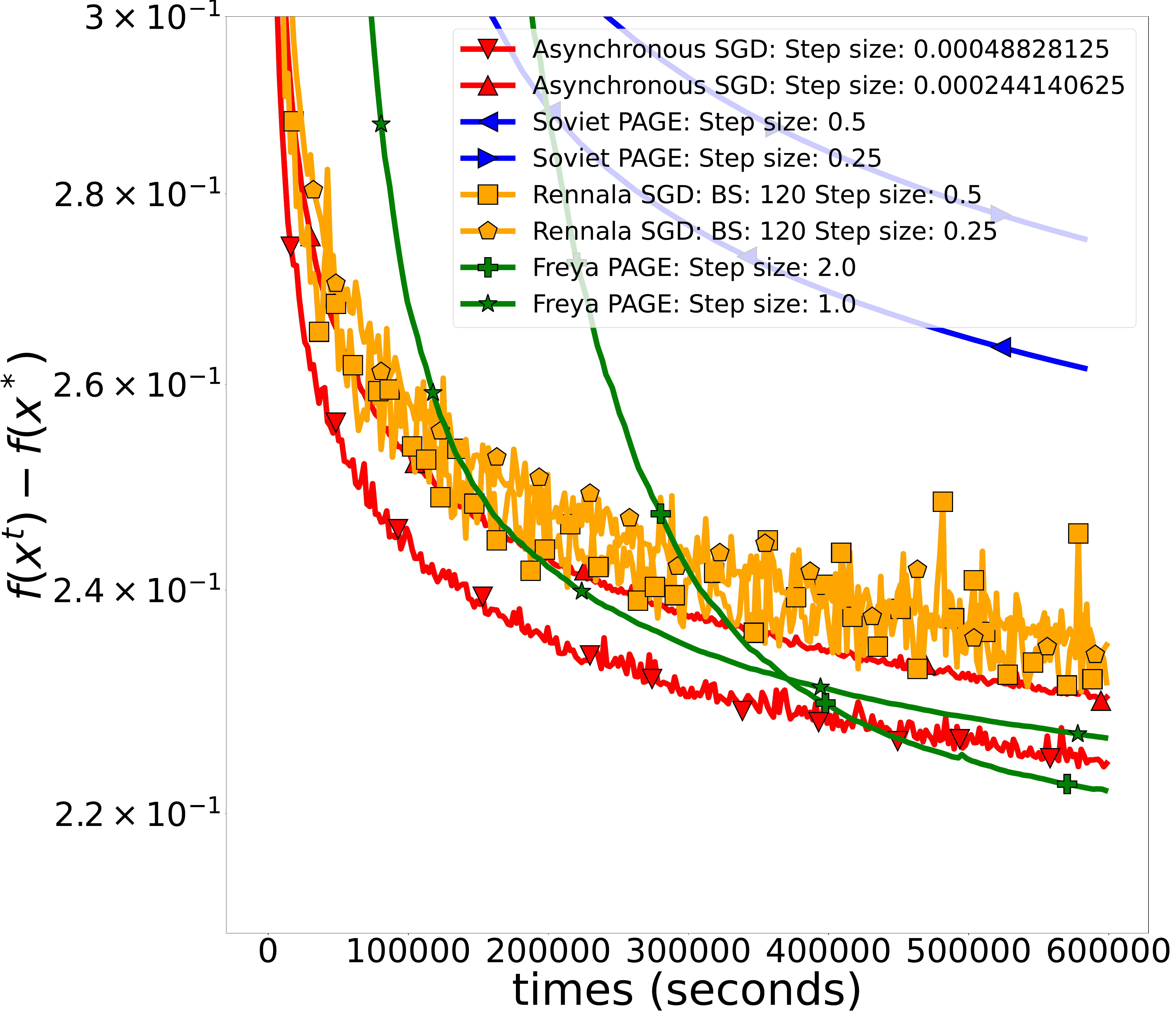}
      \caption{$n = 10000$}
      \label{fig:log_2}
    \end{subfigure}
    \caption{Experiments with the logistic regression problem on the \texttt{MNIST} dataset.}
    \label{fig:log}
\end{figure}


\begin{table}[H]
\caption{Mean and variance of algorithm accuracies on the \texttt{MNIST} test set during the final 100K seconds of the experiments from Figure~\ref{fig:log_2}.}
\label{tbl:acc}
\centering 
\begin{threeparttable}
\begin{tabular}[t]{ccc}
    \toprule
    \bf  Method & \bf Accuracy & \bf Variance of Accuracy \\
    \midrule
    \makecell{\algname{Asynchronous SGD} \\ \small\citep{koloskova2022sharper} \\ \small\citep{mishchenko2022asynchronous}} & 92.60 & 5.85e-07 \\
    \midrule
    \makecell{\algname{Soviet PAGE} \\ \small\citep{li2021page}} & 92.31 & 1.62e-07 \\
    \midrule
    \makecell{\algname{Rennala SGD} \\ \small\citep{tyurin2023optimal}} & 92.37 & 3.12e-06 \\
    \midrule
    \begin{tabular}{c} \algname{\textbf{Freya PAGE}} \end{tabular} & \begin{tabular}{c} \textbf{92.66} \end{tabular} & \begin{tabular}{c} \textbf{1.01e-07} \end{tabular}\\
    \bottomrule
    \end{tabular}
\end{threeparttable}
\end{table}

We now consider the logistic regression problem on the \texttt{MNIST} dataset \citep{lecun2010mnist}, where each algorithm samples one data point at a time. The results of the experiments are presented in Figure~\ref{fig:log}. The difference between \algname{Freya PAGE} and \algname{Rennala SGD}/\algname{Asynchronous SGD} is not as pronounced as in Section~\ref{sec:exp_quad}: the methods have almost the same performance for this particular problem. However, our method still outperforms its competitors in the low accuracy regime, and is significantly better than \algname{Soviet PAGE}.

A critical disadvantage of \algname{Rennala SGD} and \algname{Asynchronous SGD} is their noisy behavior, evident in both the plots and reflected in higher variance of the accuracy (see Table~\ref{tbl:acc}). In contrast, the iterations of \algname{Freya PAGE} in Figure~\ref{fig:log} are smooth, and its accuracy exhibits the lowest variance, as shown in Table~\ref{tbl:acc}. This stability can be attributed to the variance-reduction nature of \algname{Freya PAGE}.

\newpage

\section{The Time Complexity Guarantees of Algorithms~\ref{algorithm:ga_asynch_main_diff} and \ref{algorithm:ga_asynch_batch}}

In addition to \algname{ComputeBatchDifference} (Algorithm~\ref{algorithm:ga_asynch_main_diff}) introduced in the main part, we also analyze \algname{ComputeBatch} (Algorithm~\ref{algorithm:ga_asynch_batch}) that is similar to \algname{ComputeBatchDifference}, but calculates a minibatch of stochastic gradients $\nabla f_i(x)$ instead of stochastic gradient differences $\nabla f_i(x) - \nabla f_i(y).$

\begin{algorithm}[h]
    \centering
    \caption{\algname{ComputeBatch}($S$, $x$)}
    \label{algorithm:ga_asynch_batch}
    \begin{algorithmic}[1]
    \State \textbf{Input:} batch size $S\in\N$, point $x\in\R^d$
    \State Init $g=0\in\R^d$
    \State Broadcast $x$ to all workers
    \State For each worker, sample $j$ from $[m]$ (uniformly) and ask it to calculate $\nabla f_j(x)$
        \For{$i = 1, 2, \ldots, S$}
            \State Wait for $\nabla f_p(x)$ from a worker
            \State $g \gets g + \frac{1}{S} \nabla f_p(x)$
            \State Sample $j$ from $[m]$ (uniformly)
            and ask this worker to calculate $\nabla f_j(x)$
        \EndFor
        \State Return $g$
    \end{algorithmic}
\end{algorithm}

\THMBATHDIFF*

\begin{proof}
    Let
    \begin{align*}
        t = 4 \min_{j\in[n]} \parens{\parens{\sum_{i=1}^j \frac{1}{\tau_i}}^{-1} (S + j)}.
    \end{align*}
    As soon as some worker finishes calculating the stochastic gradient difference, it immediately starts computing the difference of the next pair. Hence, by the time $t,$ all workers will have processed at least
    \begin{align*}
        \sum_{i=1}^{n} \flr{\frac{t}{2 \tau_i}}
    \end{align*}
    pairs. Assume that
    \begin{align*}
        j^* = \arg\min_{j\in[n]} \parens{\parens{\sum_{i=1}^j \frac{1}{\tau_i}}^{-1} (S + j)}.
    \end{align*}
    Using Lemma~\ref{lemma:techn1}, we have $t \geq 4 \tau_i$ for all $i \leq j^*.$ Thus, we get $\frac{t}{2 \tau_i} \geq 1$ for all $i \leq j^*$ and 
    \begin{align*}
        &\sum_{i=1}^{n} \flr{\frac{t}{2 \tau_i}} \geq \sum_{i=1}^{j^*} \flr{\frac{t}{2 \tau_i}} \geq \sum_{i=1}^{j^*} \frac{t}{4 \tau_i} \\
        &= \frac{1}{4} \left(\sum_{i=1}^{j^*} \frac{1}{\tau_i}\right) \left(4 \parens{\sum_{i=1}^{j^*} \frac{1}{\tau_i}}^{-1} (S + j^*)\right) = S + j^* \geq S.
    \end{align*}
    We can conclude that by the time \eqref{eq:bath_diff_time}, the algorithm will have calculated $S$ pairs of stochastic gradients and exited the loop.
\end{proof}

\begin{theorem}\label{thm:batch}
    The time needed by Algorithm~\ref{algorithm:ga_asynch_batch} to calculate $g$ is at most
    \begin{align}
        2 \min_{j\in[n]} \parens{\parens{\sum_{i=1}^j \frac{1}{\tau_i}}^{-1} (S + j)}
    \end{align}
    seconds.
\end{theorem}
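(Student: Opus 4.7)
The plan is to adapt the proof of Theorem~\ref{thm:bath_diff} essentially verbatim, with one small quantitative change: in \algname{ComputeBatch}, a worker is asked to compute a single stochastic gradient $\nabla f_j(x)$ rather than a difference $\nabla f_j(x) - \nabla f_j(y)$. So the per-task time of worker $i$ is at most $\tau_i$ rather than $2\tau_i$. This is exactly the source of the factor-of-two improvement (constant $2$ instead of $4$) relative to Theorem~\ref{thm:bath_diff}.

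First I would set $t = 2\min_{j\in[n]}\bigl((\sum_{i=1}^{j} 1/\tau_i)^{-1}(S+j)\bigr)$ and let $j^\star$ be a minimizer of the expression on the right. Since each worker is immediately reassigned a new index as soon as it finishes its current stochastic gradient (the only waiting is on the driver side, inside the main loop), by time $t$ worker $i$ will have completed at least $\lfloor t/\tau_i \rfloor$ gradient evaluations.

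Next I would invoke the same technical lemma (Lemma~\ref{lemma:techn1}) used in the previous proof, applied to the definition of $j^\star$, to conclude $t \geq 2\tau_i$ for every $i \leq j^\star$, which gives $\lfloor t/\tau_i\rfloor \geq t/(2\tau_i)$. Summing and using the choice of $t$,
\[
\sum_{i=1}^{n}\flr{\tfrac{t}{\tau_i}} \;\geq\; \sum_{i=1}^{j^\star}\flr{\tfrac{t}{\tau_i}} \;\geq\; \frac{t}{2}\sum_{i=1}^{j^\star}\frac{1}{\tau_i} \;=\; S + j^\star \;\geq\; S,
\]
so the main loop has collected at least $S$ stochastic gradients by time $t$ and has exited, which is the claim.

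There is no genuine obstacle here; the argument is a direct translation of the proof of Theorem~\ref{thm:bath_diff}. The only point that needs a brief check is that the constant $2$ (instead of $4$) is correctly what falls out of Lemma~\ref{lemma:techn1} when the per-unit cost is $\tau_i$ instead of $2\tau_i$, and that no worker sits idle during the interval $[0,t]$ — both are immediate from the pseudocode of Algorithm~\ref{algorithm:ga_asynch_batch}.
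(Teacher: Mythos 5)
Your proposal is correct and follows exactly the paper's approach: the paper also proves this by pointing to the proof of Theorem~\ref{thm:bath_diff} and noting that \algname{ComputeBatch} computes a single gradient per task (cost $\tau_i$) rather than a difference (cost $2\tau_i$), which is precisely what yields the constant $2$ instead of $4$. Your fleshed-out version, including the invocation of Lemma~\ref{lemma:techn1} to get $t\geq 2\tau_i$ for $i\leq j^\star$ and the resulting bound $\sum_{i\leq j^\star}\lfloor t/\tau_i\rfloor\geq S$, is the intended argument.
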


\begin{proof}
    The proof of this theorem is essentially the same as the proof of Theorem~\ref{thm:bath_diff}. The only difference is that Algorithm~\ref{algorithm:ga_asynch_batch} calculates $\nabla f_i(x)$ instead of $\nabla f_i(x) - \nabla f_i(y).$
\end{proof}

\section{The Time Complexity Guarantees of Algorithms~\ref{algorithm:ga_asynch_main_new} and \ref{algorithm:ga_asynch_main_new_full}}
\label{sec:time_compl_sampling}

Instead of Algorithm~\ref{algorithm:ga_asynch_main_new}, we analyze a more general Algorithm~\ref{algorithm:ga_asynch_main_new_full} that reduces to Algorithm~\ref{algorithm:ga_asynch_main_new} when $\cS = [m].$

\begin{algorithm}[H]
    \centering
    \caption{\algname{ComputeBatchAnySampling}($\cS$, $x$)}
    \label{algorithm:ga_asynch_main_new_full}
    \begin{algorithmic}[1]
    \State \textbf{Input:} multiset $\cS$, point $x\in\R^d$
    \State Init $g=0\in\R^d$, multiset $\cM = \emptyset$
    \State Broadcast $x$ to all workers
    \State For each worker, sample $j$ from $\cS$ (uniformly) and ask it to calculate $\nabla f_j(x)$ \label{line:sample_one}
        \While{$\cM \neq \cS$}
            \State Wait for $\nabla f_p(x)$ from a worker
            \If{$p \in \cS \backslash \cM$}
            \State $g \gets g + \frac{1}{\abs{\cS}} \nabla f_p(x)$
            \State Update $\cM \gets \cM \cup \{p\}$
            \EndIf
            \State Sample $j$ from $\cS \backslash \cM$ (uniformly) \label{line:sample_two}
            and ask this worker to calculate $\nabla f_{j}(x)$
        \EndWhile
        \State Return $g = \frac{1}{\abs{\cS}} \sum\limits_{i \in \cS} \nabla f_i(x)$
    \end{algorithmic}
\end{algorithm}

\begin{restatable}{theorem}{THMFULLGRADTIMEFULL}
    \label{thm:compute_batch}
    The expected time needed by Algorithm~\ref{algorithm:ga_asynch_main_new_full} to calculate $g = \frac{1}{\abs{\cS}} \sum\limits_{i \in \cS} \nabla f_i$ is at most
    \begin{align}
        \label{eq:COVSRDFTGtaGOfFWmaO}
        12 \min_{j\in[n]} \parens{\parens{\sum_{i=1}^j \frac{1}{\tau_i}}^{-1} (\abs{\cS} + \min\{\abs{\cS},n\} \log \left(\min\{\abs{\cS},n\}\right) + j)}
    \end{align}
    seconds. 
\end{restatable}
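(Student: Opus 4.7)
The plan is to adapt the scheduling argument used in Theorem~\ref{thm:bath_diff} and combine it with a coupon-collector-style analysis, since the key new difficulty is that Algorithm~\ref{algorithm:ga_asynch_main_new_full} may waste work on indices sampled (initially in line~\ref{line:sample_one}) or re-sampled (in line~\ref{line:sample_two}) that turn out to duplicate an index another worker happens to be computing. So for any integer $N$, I would first reproduce the argument from the proof of Theorem~\ref{thm:bath_diff} essentially verbatim: since workers are never idle while $\mathcal{M} \neq \mathcal{S}$, by time
\[
t(N) \eqdef 4 \min_{j\in[n]} \parens{\parens{\sum_{i=1}^{j} \frac{1}{\tau_i}}^{-1}(N+j)}
\]
worker $i$ has finished at least $\lfloor t(N)/(2\tau_i)\rfloor$ tasks, and (via Lemma~\ref{lemma:techn1}) the total number of completions is at least $N$. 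Hence on every sample path, the running time is bounded by $t(N^*)$, where $N^* \in \mathbb{N}$ is the (random) number of completions the algorithm performs before $\mathcal{M} = \mathcal{S}$.

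The main work is then to upper-bound $\mathbb{E}[N^*]$. For this I would use the following symmetry fact: conditional on the assignment history of a worker whose current task was started at the moment $\mathcal{M}$ had size $k_a$, the index it is computing is uniform on $\mathcal{S}\setminus\mathcal{M}_a$, so the conditional probability that its completion is \emph{new}, given that $|\mathcal{M}|=k$ at the completion instant, equals $(|\mathcal{S}|-k)/(|\mathcal{S}|-k_a)$. Let $T_k$ denote the number of completions produced while $|\mathcal{M}|=k$. I would bound $\mathbb{E}[T_k]$ by noticing that the only way a completion is wasted is through a collision: at the time of assignment the freshly sampled index already coincides with an \emph{active} outstanding index of another worker. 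Since there are at most $\min\{|\mathcal{S}|-k,\, n-1\}$ active indices in flight at any instant and the sample is uniform on the $|\mathcal{S}|-k$ remaining elements, each new assignment is wasted with probability at most $\min\{n-1,\,|\mathcal{S}|-k\}/(|\mathcal{S}|-k)$. Summing the resulting geometric-like estimates over $k=0,\ldots,|\mathcal{S}|-1$ telescopes to
\[
\mathbb{E}[N^*] \;\leq\; 3\parens{|\mathcal{S}| + \min\{|\mathcal{S}|,n\}\,\log\min\{|\mathcal{S}|,n\}},
\]
where the logarithm in $\min\{|\mathcal{S}|,n\}$ arises exactly in the harmonic-sum tail when $|\mathcal{S}|-k \lesssim n$, i.e.\ in the final $O(n)$ indices; and the $+|\mathcal{S}|$ contribution just counts the mandatory $|\mathcal{S}|$ successful completions.

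Finally I would combine the two pieces: since $\text{time}\le t(N^*)$ almost surely and $N\mapsto t(N)$ is concave (it is a minimum of affine functions of $N$), Jensen's inequality gives
\[
\mathbb{E}[\text{time}] \;\leq\; t\bigl(\mathbb{E}[N^*]\bigr)
\;\leq\; 4\min_{j\in[n]}\parens{\parens{\sum_{i=1}^j \frac{1}{\tau_i}}^{-1}\bigl(3(|\mathcal{S}|+\min\{|\mathcal{S}|,n\}\log\min\{|\mathcal{S}|,n\})+j\bigr)},
\]
and absorbing the factor $3$ into the coefficient $4$ and slightly loosening $3j$ to $j$ inside the min recovers the stated bound~\eqref{eq:COVSRDFTGtaGOfFWmaO} with constant $12$.

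The main obstacle is the coupon-collector step. The subtlety is that $k_a$ (the value of $|\mathcal{M}|$ when a still-running task was assigned) can be much smaller than the current $k$, because slow workers can sit on an old task while many other completions happen; naively bounding the success probability by $(|\mathcal{S}|-k)/|\mathcal{S}|$ yields $O(|\mathcal{S}|\log|\mathcal{S}|)$, which is too loose when $n \ll |\mathcal{S}|$. The sharper bound with $\min\{|\mathcal{S}|,n\}\log\min\{|\mathcal{S}|,n\}$ requires using the structural fact that there can be at most $n-1$ concurrently active indices — so only that many collisions can be created per new assignment — rather than treating assignments as independent uniform coupons. Getting the telescoping to come out cleanly under arbitrary asynchronous schedules is the technical crux.
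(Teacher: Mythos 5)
Your skeleton is structurally different from the paper's and, at the top level, sound: indeed on every sample path the running time is at most $t(N^*)$, where $N^*$ is the number of completions processed before $\mathcal{M}=\mathcal{S}$, the map $N\mapsto t(N)$ is concave (a minimum of affine functions), so $\mathbb{E}[\text{time}]\le t(\mathbb{E}[N^*])$ by Jensen. That is a genuinely different route from the paper, which never bounds the \emph{total} number of completions over all workers; instead it fixes a cutoff $k$, restricts attention to the ``fast'' workers $\{1,\dots,k\}$, bounds the expected number of \emph{fast-worker} assignments, multiplies by the parallel rate $\bigl(\sum_{i\le k}1/\tau_i\bigr)^{-1}$, adds $\tau_k$, and only then minimizes over $k$. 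Where the paper uses linearity of expectation for a fixed $k$, you replace it by Jensen applied to the concave envelope $t(\cdot)$, which is an elegant packaging if the $\mathbb{E}[N^*]$ bound can be established.

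However, the central step --- $\mathbb{E}[N^*]\lesssim |\mathcal{S}|+\min\{|\mathcal{S}|,n\}\log\min\{|\mathcal{S}|,n\}$ --- is not justified by the argument you give, and I believe this is a genuine gap rather than a presentational one. Two problems. First, it is not true that ``the only way a completion is wasted is through a collision at assignment time.'' If worker $A$ is assigned a fresh index $j$ and worker $B$ is later assigned $j$ (a collision for $B$) but $B$ finishes first, then it is \emph{$A$'s} completion that is wasted even though $A$'s assignment was not a collision. (One can repair this by counting: for an index assigned $a_j$ times, the $a_j-1$ assignments after the first are the collisions and there are at most $a_j-1$ wasted completions of $j$, so wasted completions $\le$ total collisions; but this repair must be made explicit.) Second, and more seriously, the ``geometric-like estimates telescope over $k$'' step does not go through. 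The number of assignments made while $|\mathcal{M}|=k$ is \emph{not} $1+\text{Geometric}$ with success rate $1-\min\{n-1,|\mathcal{S}|-k\}/(|\mathcal{S}|-k)$, because the completions processed while $|\mathcal{M}|=k$ originate from assignments made much earlier (when $|\mathcal{M}|$ was smaller), including the initial $n$ assignments at level $k=0$, and there is no a priori bound on how many of them arrive while $|\mathcal{M}|=k$. The quantity whose increments are cleanly geometric is not $|\mathcal{M}|$. The paper resolves exactly this mismatch by introducing the set $\cU$ of indices \emph{not currently held by a fast worker}: $|\cU|$ decreases deterministically at \emph{assignment} time whenever a fast worker draws from $\cU$, which makes the number of fast-worker assignments at level $|\cU|=p$ genuinely stochastically dominated by $1+\min\{k,S\}/p$, and the harmonic-sum tail drops out correctly. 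A second benefit of the fast/slow split, which your all-worker count also loses, is the clean termination argument: once $\cU=\emptyset$ every outstanding index is on a fast worker, so at most $\tau_k$ additional seconds suffice; a bound only on $N^*$ over all workers does not by itself give an analogue of this, because the last few indices could be in flight on arbitrarily slow workers unless the ``fast'' viewpoint is introduced somewhere.

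So: same asymptotic target, genuinely different route, but the proposal as written rests on an $\mathbb{E}[N^*]$ estimate that is asserted rather than proved, and the sketch for it contains a real flaw in the telescoping step. To rescue your route you would need to replace the $|\mathcal{M}|$-indexed telescoping by a potential that decreases at assignment time (something like the paper's $\cU$, or the set of indices not held by any worker), and you would still need a mechanism to cap the contribution of arbitrarily slow workers --- at which point the argument converges back to the paper's fast-worker decomposition.
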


\textit{Proof Sketch:} While the following proof is technical, the intuition and idea behind it and the algorithm are relatively simple. For simplicity, assume that $n \geq \abs{\cS}.$ The set $\cS \backslash \cM$ includes all indices that have not yet been calculated. Each worker is assigned a new random index from $\cS \backslash \cM$ and starts the calculation of the gradient. At the beginning of the algorithm, when the set $\cS \backslash \cM$ is large, the probability that two workers are assigned the same index is very small. Hence, using the same idea as in the proof of Theorem~\ref{thm:bath_diff}, the workers will calculate $\approx \abs{\cS} - n$ stochastic gradients after 
\begin{align*}
    \approx \min_{j\in[n]} \parens{\parens{\sum_{i=1}^j \frac{1}{\tau_i}}^{-1} (\abs{\cS} - n + j)}
\end{align*}
seconds. However, once the size of $\cS \backslash \cM$ becomes roughly equal to $n$, the probability that two workers sample the same index increases. In the final steps of the algorithm, we encounter the same issue as in the famous \emph{coupon collector's problem}, resulting in an additional factor of $n \log n$ because some stochastic gradients will be calculated multiple times.

\begin{proof}
    Let us define $S = \abs{\cS}$ and take any $k \in [n].$ We refer to the workers with the upper bounds $\tau_i$ such that $\tau_i \leq \tau_{k}$ as ``fast'', and the others will be termed ``slow''.

    Consider the moment when the algorithm samples $j$ from the set $\cS \backslash \cM$ to allocate it to one of the ``fast'' workers (\begin{NoHyper}Line~\ref{line:sample_one} or \ref{line:sample_two}\end{NoHyper}).
    The probability of sampling $j$ such that $\nabla f_j(x)$ is currently being calculated by another ``fast'' worker is
    \begin{align*}
        &\frac{\abs{\{\textnormal{indices from $\cS \backslash \cM$ taken by ``fast'' workers}\}}}{\abs{\{\textnormal{indices from $\cS \backslash \cM$ taken by ``fast'' workers}\}} + \abs{\{\textnormal{indices from $\cS \backslash \cM$ not taken by ``fast'' workers}\}}} \\
        &\leq \frac{\min\{k, S\}}{\min\{k, S\} + \abs{\{\textnormal{indices from $\cS \backslash \cM$ not taken by ``fast'' workers}\}}}
    \end{align*}
    because there are at most $k$ ``fast'' workers and at most $S$ distinct stochastic gradients. Let us define the set
    \begin{align*}
        \cU \eqdef \{\textnormal{indices from $\cS \backslash \cM$ not taken by ``fast'' workers}\}.
    \end{align*}
    A ``fast'' worker can be ``unlucky'' and start calculating a stochastic gradient that is being computed by another ``fast'' worker.
    However, with probability at least
    \begin{align*}
        \geq \frac{\abs{\cU}}{\min\{k, S\} + \abs{\cU}},
    \end{align*}
    it will take a new index $j$ that was not previously taken by another ``fast'' worker.

    Thus, the while loop of the algorithm defines a Markov process that begins with some $\cU \subseteq \cS.$ The size of $\cU$ decreases by one with probability at least $\frac{\abs{\cU}}{\min\{k, S\} + \abs{\cU}}$ in iterations where the algorithm samples $j$ from $\cU$ and asks a ``fast'' worker to calculate the stochastic gradient. Additionally, the size of $\cU$ can decrease by one when a ``slow'' worker finishes calculating a stochastic gradient from $\cU$.

    Let $\bar{t}$ be the time required for the Markov process to reach the state $\cU = \emptyset.$ Then, the while loop in Algorithm~\ref{algorithm:ga_asynch_main_new} will finish after at most 
    \begin{align}
    \label{eq:ItxNEbvoIPCNKKusLH}
    \bar{T} \eqdef \bar{t} + \tau_{k}
    \end{align} 
    seconds because once $\cU = \emptyset,$ all non-processed indices from $\cS \backslash \cM$ are assigned to the ``fast'' workers, so calculating the remaining stochastic gradients will take at most $\tau_{k}$ seconds.

    It remains to estimate $\bar{t}.$ Let $\eta_p$ be the number of iterations of the while loop where the algorithm samples $j$ from $\cS \backslash \cM$ and asks a ``fast'' worker to calculate the stochastic gradient when $\abs{\cU} = p.$ By the definition of the Markov chain, we have
    \begin{align}
        \label{eq:hzHdbQumZOGv}
        \Exp{\eta_p} \leq \left(\frac{p}{\min\{k, S\} + p}\right)^{-1} = 1 + \frac{\min\{k, S\}}{p}
    \end{align}
    because with probability at least $\frac{p}{\min\{k, S\} + p},$ one of the (``lucky'') ``fast'' workers receives $j$ from $\cU$ and decreases the size of $\cU$ by $1$ ($\eta_p$ has a geometric-like distribution).

    Since $\abs{\cU} \leq S$ at the beginning of the while loop, it is sufficient for the ``fast'' workers to calculate at most 
    \begin{align*}
        \sum_{p=1}^{S} \left(\eta_p + 1\right)
    \end{align*}
    stochastic gradients to ensure that $\cU = \emptyset$ (it is possible that some stochastic gradients will be calculated many times). Indeed, if $\abs{\cU} = p$ for the first moment, then after $\eta_p + 1$ calculations of stochastic gradients by the ``fast'' workers, the size of set will be at most $p - 1.$ The last ``plus one'' calculation can only happen when $\abs{\cU} = p - 1.$

    The time required for the ``fast'' workers to process this number of stochastic gradients is at most
    \begin{align*}
        t' = 2 \parens{\sum_{i=1}^{k} \frac{1}{\tau_i}}^{-1} \left(\sum_{p=1}^{S} (\eta_p + 1)\right) + \tau_{k},
    \end{align*}
    because for this choice of $t'$, we have
    \begin{align*}
        \sum_{i=1}^{k} \flr{\frac{t'}{\tau_i}} \geq \frac{1}{2} \sum_{i=1}^{k} \frac{t'}{\tau_i} \geq \sum_{p=1}^{S} (\eta_p + 1),
    \end{align*}
    where $\flr{\frac{t'}{\tau_i}}$ is the number of stochastic gradients that worker $i$ can calculate in $t'$ seconds.
    Taking expectation gives
    \begin{align*}
        \Exp{t'}
        &= 2 \parens{\sum_{i=1}^{k} \frac{1}{\tau_i}}^{-1} \left(\sum_{p=1}^{S} \Exp{\eta_p + 1}\right) + \tau_{k} \\
        &\overset{\eqref{eq:hzHdbQumZOGv}}{\leq} 2 \parens{\sum_{i=1}^{k} \frac{1}{\tau_i}}^{-1} \left(2 S + \sum_{p=1}^{S} \frac{\min\{k, S\}}{p} \right) + \tau_{k} \\
        &\leq 2 \parens{\sum_{i=1}^{k} \frac{1}{\tau_i}}^{-1} \left(2 S + \sum_{p=1}^{S} \frac{\min\{n, S\}}{p} \right) + \tau_{k} \\
        &= 2 \parens{\sum_{i=1}^{k} \frac{1}{\tau_i}}^{-1} \left(2 S + \sum_{p=\min\{n, S\} + 1}^{S} \frac{\min\{n, S\}}{p} + \min\{n, S\} \sum_{p=1}^{\min\{n, S\}} \frac{1}{p} \right) + \tau_{k} \\
        &\leq 2 \parens{\sum_{i=1}^{k} \frac{1}{\tau_i}}^{-1} \left(3 S + \min\{n, S\} \sum_{p=1}^{\min\{n, S\}} \frac{1}{p}\right) + \tau_{k} \\
        &\leq 2 \parens{\sum_{i=1}^{k} \frac{1}{\tau_i}}^{-1} \left(3 S + \min\{n, S\} \left(2 + \log\left(\min\{n, S\}\right)\right)\right) + \tau_{k} \\
        &\leq 10 \parens{\sum_{i=1}^{k} \frac{1}{\tau_i}}^{-1} \left(S + \min\{n, S\} \log\left(\min\{n, S\}\right)\right) + \tau_{k},
    \end{align*}
    where we use the standard bound on the harmonic series. Thus, the expectation of the total time \eqref{eq:ItxNEbvoIPCNKKusLH} can be bounded by
    \begin{align*}
        \Exp{\bar{T}} 
        &\leq 10 \parens{\sum_{i=1}^{k} \frac{1}{\tau_i}}^{-1} \left(S + \min\{n, S\} \log\left(\min\{n, S\}\right)\right) + 2 \tau_{k} \\
        &\leq 10 \parens{\sum_{i=1}^{k} \frac{1}{\tau_i}}^{-1} \left(S + \min\{n, S\} \log\left(\min\{n, S\}\right) + k\right) + 2 \tau_{k},
    \end{align*}
    where in the last line we add $k \geq 0.$
    Recall that $k$ is a parameter we can choose. Let us take
    \begin{align*}
        k = \arg\min_{j\in[n]} \parens{\sum_{i=1}^{j} \frac{1}{\tau_i}}^{-1} \left(S + \min\{n, S\} \log\left(\min\{n, S\}\right) + j\right).
    \end{align*}
    Using Lemma~\ref{lemma:techn1}, we have
    \begin{align*}
        \tau_{k} \leq \min_{j \in [n]}\parens{\sum_{i=1}^{j} \frac{1}{\tau_i}}^{-1} \left(S + \min\{n, S\} \log\left(\min\{n, S\}\right) + j\right)
    \end{align*}
    and hence
    \begin{align*}
        \Exp{\bar{T}} 
        \leq 12 \min_{j \in [n]} \left(\parens{\sum_{i=1}^{j} \frac{1}{\tau_i}}^{-1} \left(S + \min\{n, S\} \log\left(\min\{n, S\}\right) + j\right)\right).
    \end{align*}

\end{proof}

\newpage

\section{Proofs for Algorithm \ref{algorithm:rennala_page_gen} (\algnamelarge{Freya PAGE})}\label{sec:proofs_rennala_page}

The proofs use the simplified notation $\Teq(S) \eqdef \Teq(S, [\tau_i]_{i=1}^n)$ from Definition~\ref{def:time_budget}.

Since the update rule of \algname{PAGE} coincides with that of \algname{Freya PAGE}, one can directly apply the iteration complexity results established in \citet{tyurin2022sharper}.

\THMPAGERENNALAINDEPITER*

\begin{proof}
    The result follows from Theorem 6 of \citet{tyurin2022sharper}, using the parameters from the ``Uniform With Replacement'' line in Table 1 of the same work.
\end{proof}

\THMPAGERENNALATIME*
\begin{proof}
    The result established in Theorem \ref{thm:page_rate} says that the iteration complexity of the algorithm is
    \begin{align*}
        K_{\algname{PAGE}} \eqdef \frac{2 \delta^0}{\varepsilon} \parens{L_- + L_{\pm} \sqrt{\frac{1-p}{pS}}}.
    \end{align*}
    At each iteration, with probability $1-p$, the workers compute $S$ differences of stochastic gradients, which by Theorem~\ref{thm:bath_diff} takes
    \begin{align*}
        4 \min_{j\in[n]} \parens{\parens{\sum_{i=1}^j \frac{1}{\tau_i}}^{-1} (S+j)}
    \end{align*}
    seconds. Otherwise, they collect the full gradient, which can be done (Theorem \ref{thm:full_grad_time}) in 
    \begin{align*}
        &12 \min_{j\in[n]} \parens{\parens{\sum_{i=1}^j \frac{1}{\tau_i}}^{-1} (m + \min\{m,n\} \log \left(\min\{m,n\}\right) + j)} \\
        &\leq 24 \min_{j\in[n]} \parens{\parens{\sum_{i=1}^j \frac{1}{\tau_i}}^{-1} (m+j)}
    \end{align*}
    seconds, where the inequality uses Assumption~\ref{ass:m_is_large}.
    Hence, recalling the notation
    \begin{align*}
        \Teq(S) \eqdef \min_{j\in[n]} \parens{\parens{\sum_{i=1}^j \frac{1}{\tau_i}}^{-1} (S+j)},
    \end{align*}
    the (expected) time complexity of the method is
    \begin{align*}
        T(p,S, [\tau_i]_{i=1}^n) &= 24 \Teq(m) + 24 K_{\algname{PAGE}} \times \parens{p \Teq(m) + (1-p) \Teq(S)},
    \end{align*}
    where the term $\Teq(m)$ corresponds to the preprocessing step, when the algorithm needs to calculate $g^0 = \nabla f(x^0) = \nicefrac{1}{m} \sum_{i=1}^m \nabla f_i(x^0)$.
\end{proof}

\begin{theorem}
    \label{thm:opt_p_indep}
    Up to a constant factor, the time complexity $T(p,S, [\tau_i]_{i=1}^n)$ from \eqref{eq:compl_p_S_indep} is at least 
    \begin{align}
        \label{eq:T_S_indep}
        \Teq(m) + \frac{\delta^0}{\varepsilon}  \min\left\{L_- \Teq(m), L_- \Teq(S) + L_{\pm} \sqrt{\frac{\Teq(m)\Teq(S)}{S}} \right\},
    \end{align}
    and attains this value with 
    \begin{align}\label{eq:optimal_p_indep}
        p^*(S) = \begin{cases}
            1, & L_- \Teq(m) \leq L_- \Teq(S) + L_{\pm} \sqrt{\frac{\Teq(m)\Teq(S)}{S}} \\
            \frac{\Teq(S)}{\Teq(m)}, & \textnormal{otherwise}.
        \end{cases}
    \end{align}
\end{theorem}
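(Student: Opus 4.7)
My plan is to reduce the optimization over $p$ to a one-dimensional analysis and match the resulting minimizer against the stated $p^*(S)$ up to a constant factor. Write $A = \Teq(m)$, $B = \Teq(S)$, $\alpha = L_-$, and $\beta = L_\pm$. The preprocessing term $12 A$ in $T(p,S,[\tau_i]_{i=1}^n)$ is independent of $p$, so the entire task reduces to understanding
\begin{align*}
G(p) \eqdef \left(\alpha + \beta\sqrt{\frac{1-p}{pS}}\right)\left(pA + (1-p)B\right)
\end{align*}
on $p \in (0,1]$, and to showing $\min_p G(p) \asymp \min\{\alpha A,\, \alpha B + \beta\sqrt{AB/S}\}$.

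For the upper bound I would simply evaluate $G$ at the two candidate values of $p$. Plugging in $p = 1$ gives $G(1) = \alpha A$ immediately. For $p = B/A$ (valid whenever $B \leq A$), one has $pA + (1-p)B = B + (1 - B/A)B \leq 2B$ and $\sqrt{(1-p)/(pS)} = \sqrt{(A - B)/(BS)} \leq \sqrt{A/(BS)}$, so $G(B/A) \leq 2\alpha B + 2\beta\sqrt{AB/S}$. Taking the smaller of the two candidates establishes the upper bound in \eqref{eq:T_S_indep} up to a factor of $2$, and the branching condition in the definition of $p^*(S)$ is precisely the threshold that picks whichever of these two values is smaller.

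For the lower bound the convenient substitution is $u = pA + (1-p)B$, which makes $G(p) = \alpha u + \beta u \sqrt{(A-u)/((u-B)S)}$ when $A \geq B$. In the easy sub-regime $A \leq B$, we have $u \in [A,B]$ and $G(p) \geq \alpha u \geq \alpha A$, which already matches the stated minimum since $\alpha A \leq \alpha B + \beta\sqrt{AB/S}$ in this case. For $A > B$, I would split on whether $u \geq A/2$ or $u < A/2$. In the first sub-case $G(p) \geq \alpha u \geq \alpha A/2$. In the second sub-case (which forces $B < A/2$) I use $A - u \geq A/2$ together with $u - B \leq u$ to get $\sqrt{(A-u)/((u-B)S)} \geq \sqrt{A/(2uS)}$; hence the cross term is bounded below by $\beta u \sqrt{A/(2uS)} = \beta \sqrt{uA/(2S)} \geq \beta \sqrt{AB/(2S)}$, while the linear term is bounded below by $\alpha u \geq \alpha B$. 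Combining the two sub-cases yields $G(p) \geq \tfrac{1}{2} \min\{\alpha A,\, \alpha B + \beta\sqrt{AB/S}\}$ for every admissible $p$.

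The main obstacle is keeping the constants clean across the case distinctions; the substitution $u = pA + (1-p)B$ is the key simplification because it turns the two-factor product into a function whose monotonic behavior is transparent and lets the threshold between the two branches of $p^*(S)$ fall out naturally as the boundary between the sub-cases. Once both bounds are in place, the value attained at $p = p^*(S)$ computed in the upper-bound step is within a constant factor of the lower bound, which completes the proof.
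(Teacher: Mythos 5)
Your argument is correct. Both the upper and lower bounds go through: plugging in $p=1$ and $p=B/A$ (in the range where the latter is a valid probability) gives the two candidate values up to a factor of $2$; for the lower bound, the substitution $u = pA + (1-p)B$ with $p = (u-B)/(A-B)$ makes $G(p) = \alpha u + \beta u \sqrt{(A-u)/((u-B)S)}$, and your three-way split ($A\le B$; $A>B$, $u\ge A/2$; $A>B$, $u<A/2$) covers the admissible range and each piece lands on one of the two branches of the minimum with explicit constants.

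The paper proves the same lower bound with a slightly different case decomposition: it keeps $p$ and $S$ as the variables and splits on $p \ge \tfrac12$ versus $p < \tfrac12$ and on $S \ge \tfrac m2$ versus $S < \tfrac m2$; in the easy cases it reads off $L_-\Teq(m)$ directly, and in the hard case ($p<\tfrac12$, $S<\tfrac m2$) it drops $1-p$ to a constant and then applies the AM--GM inequality $p\,\Teq(m)+\Teq(S) \ge 2\sqrt{p\,\Teq(m)\Teq(S)}$ to produce the cross term $L_\pm\sqrt{\Teq(m)\Teq(S)/S}$. Your version replaces the explicit AM--GM step by the change of variable $u$ together with the elementary bounds $A-u\ge A/2$ and $u-B\le u$, and it arrives at the same cross term with a slightly different constant. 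Neither route is appreciably shorter; the paper's split on $p,S$ is a little more direct to state, while your substitution makes the monotonic behavior of the product more transparent and naturally exposes the threshold $u\approx A/2$ (equivalently $p\approx \tfrac12$) at which the branch of $p^*(S)$ switches. One thing your write-up handles slightly more carefully than the paper's: you explicitly note that $p=B/A$ is only admissible when $B\le A$ and that the branching on $p^*(S)$ reduces to $p=1$ otherwise, whereas the paper just asserts that the lower bound ``can be attained.''
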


\begin{proof}
    Up to a constant factor, by Theorem \ref{thm:page_time_compl}, the time complexity of \algname{Freya PAGE} is
    \begin{align*}
        T(p,S, [\tau_i]_{i=1}^n) \eqdef \Teq(m) + \frac{\delta^0}{\varepsilon} \parens{L_- + L_{\pm} \sqrt{\frac{1-p}{p S}}} 
        \parens{p \Teq(m) + (1-p) \Teq(S)}.
    \end{align*}
    Let us denote the second term in the above equation as $T_{p,S}$. Then for all $p \geq \frac{1}{2},$ we have
    \begin{align}\label{eq:tpropto_indep}
        T_{p,S} 
        &\propto \frac{\delta^0}{\varepsilon} \parens{L_- + L_{\pm} \sqrt{\frac{1-p}{pS}}} \parens{p \Teq(m) + (1-p) \Teq(S)}
        \geq \frac{\delta^0}{2\varepsilon} L_- \Teq(m),
    \end{align}
    and for all $S \geq \frac{m}{2}$
    \begin{align*}
        T_{p,S} 
        &\geq \frac{\delta^0}{\varepsilon} L_- \brac{p \Teq(m)
        + (1-p) \min_{j\in[n]} \parens{\parens{\sum_{i=1}^j \frac{1}{\tau_i}}^{-1} \parens{\frac{m}{2}+j}}}
        \geq \frac{\delta^0}{2 \varepsilon} L_- \Teq(m).
    \end{align*}
    Otherwise, when $p < \frac{1}{2}$ and $S < \frac{m}{2}$, we have
    \begin{align*}
        T_{p,S} 
        &\geq \frac{\delta^0}{2 \varepsilon} \parens{L_- + L_{\pm} \sqrt{\frac{1}{p S}}} \parens{p \Teq(m) + \Teq(S)} \\
        &\geq \frac{\delta^0}{2 \varepsilon} L_- \Teq(S) + \frac{\delta^0}{2 \varepsilon} \parens{L_{\pm} \sqrt{\frac{1}{p S}}} \parens{p \Teq(m) + \Teq(S)} \\
        &\geq \frac{\delta^0}{2 \varepsilon} L_- \Teq(S) + \frac{\delta^0}{\varepsilon} \parens{L_{\pm} \sqrt{\frac{1}{p S}}} \sqrt{p \Teq(m)\Teq(S)} \\
        &= \frac{\delta^0}{2 \varepsilon} L_- \Teq(S) + \frac{\delta^0}{\varepsilon} L_{\pm} \sqrt{\frac{\Teq(m)\Teq(S)}{S}}.
    \end{align*}
    Hence, up to a constant factor,
    \begin{align*}
        T(p,S, [\tau_i]_{i=1}^n)
        = \Teq(m) + T_{p,S}
        \geq \Teq(m) + \frac{\delta^0}{\varepsilon}  \min\left\{L_- \Teq(m), L_- \Teq(S) + L_{\pm} \frac{\sqrt{\Teq(m)\Teq(S)}}{\sqrt{S}} \right\}.
    \end{align*}
    It can be easily verified that this bound can be attained (up to a constant factor) using the parameter $p$ as defined in~\eqref{eq:optimal_p_indep}.
\end{proof}

\begin{theorem}\label{thm:opt_S}
    Up to a constant factor, the minimum of the time complexities \eqref{eq:compl_p_S_indep} and \eqref{eq:T_S_indep} is
    \begin{align*}
        \Teq(m) + \frac{\delta^0}{\varepsilon}  \min\left\{L_- \Teq(m), \min_{S \in [m]} \left[L_- \Teq(S) + L_{\pm} \frac{\sqrt{\Teq(m)\Teq(S)}}{\sqrt{S}}\right] \right\}
    \end{align*}
    and is achieved for 
    \begin{align*}
        S^* = \arg\min_{S \in [m]} \min\left\{L_- \Teq(m), L_- \Teq(S) + L_{\pm} \frac{\sqrt{\Teq(m)\Teq(S)}}{\sqrt{S}} \right\}
    \end{align*}
    and $p^* = p^*(S^*),$ where $p^*(S)$ is defined in \eqref{eq:optimal_p_indep}.
\end{theorem}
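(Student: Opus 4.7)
The plan is essentially a one-line reduction to Theorem~\ref{thm:opt_p_indep}. That theorem already establishes that, for any fixed $S$, the time complexity $T(p, S, [\tau_i]_{i=1}^n)$ in~\eqref{eq:compl_p_S_indep} is minimized over $p \in (0,1]$ (up to a constant) by $p^*(S)$, and the resulting value equals the expression~\eqref{eq:T_S_indep},
\[
\Teq(m) + \frac{\delta^0}{\varepsilon}\, \min\!\left\{L_-\Teq(m),\; L_-\Teq(S) + L_{\pm}\sqrt{\tfrac{\Teq(m)\Teq(S)}{S}}\right\}.
\]
So the only remaining task is to minimize this quantity over $S \in [m]$ and read off the optimal parameters.

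The key step is the elementary identity $\min_{S}\min\{A, B(S)\} = \min\{A, \min_{S} B(S)\}$ whenever $A$ does not depend on $S$. Setting $A \eqdef L_-\Teq(m)$ and $B(S) \eqdef L_-\Teq(S) + L_{\pm}\sqrt{\Teq(m)\Teq(S)}/\sqrt{S}$, and noting that the additive prefactor $\Teq(m)$ is also independent of $S$, I can pull the minimization over $S$ through to obtain exactly the expression stated in Theorem~\ref{thm:opt_S}. Defining $S^* \eqdef \arg\min_{S \in [m]} \min\{A, B(S)\}$ and setting $p^* \eqdef p^*(S^*)$, the optimum is attained at $(S^*, p^*)$ by the second part of Theorem~\ref{thm:opt_p_indep}.

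There is no real obstacle here — this is an algebraic swap of minima — but I would include one sanity check: the restriction $S \in [m]$ loses nothing. Indeed, $\Teq(S)$ is non-decreasing in $S$ while the correction term behaves like $\sqrt{\Teq(S)/S}$, which does not improve for $S > m$; moreover, minibatches of size larger than $m$ are not meaningful for the uniform-with-replacement sampling used inside \algname{ComputeBatchDifference}, so the restriction $S \in [m]$ is a natural one. Combining these observations with the direct application of Theorem~\ref{thm:opt_p_indep} completes the proof.
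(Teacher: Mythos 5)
Your proof matches the paper's exactly: the paper's argument for this theorem is literally a one-line reduction to Theorem~\ref{thm:opt_p_indep} followed by minimizing over $S$, which is precisely your min-swap. The only issue is in your sanity check for the restriction $S \in [m]$: the claim that the correction term $\sqrt{\Teq(S)/S}$ ``does not improve for $S > m$'' is backwards — since $\Teq(S)/S = \min_j (\sum_{i \le j}\nicefrac{1}{\tau_i})^{-1}(1 + \nicefrac{j}{S})$ is non-increasing in $S$, that term actually \emph{does} keep decreasing. The correct justification is that the outer minimum with $L_-\Teq(m)$ makes large $S$ irrelevant: for any $S \geq m$, $F(S) \geq L_-\Teq(S) \geq L_-\Teq(m)$ by monotonicity of $\Teq$, so $\min\{L_-\Teq(m), F(S)\} = L_-\Teq(m)$ and no $S > m$ can beat what is already attained inside $[m]$. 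Since this remark is not needed for the reduction itself, the proof as a whole stands, but you should fix the monotonicity claim.
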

\begin{proof}
    This is a simple corollary of Theorem~\ref{thm:opt_p_indep}: we take $S$ that minimizes \eqref{eq:T_S_indep}.
\end{proof}

In certain scenarios, we can derive the optimal parameter values explicitly.

\begin{theorem}\label{thm:rpage_psopt_indep}
    \leavevmode
    \begin{enumerate}
        \item If $n \leq \frac{L_{\pm} \sqrt{m}}{L_-} \leq m$, then (up to constants) $S^* = \frac{L_{\pm} \sqrt{m}}{L_-}$ and $p^* = \frac{L_{\pm}}{L_- \sqrt{m}}$ are optimal parameters and
        \begin{align*}
            T(p^*,S^*,[\tau_i]_{i=1}^n) = \Teq(m) + \frac{\delta^0 L_-}{\varepsilon}
            \Teq\parens{\frac{L_{\pm} \sqrt{m}}{L_-}}.
        \end{align*}
        \item If $\frac{L_{\pm}\sqrt{m}}{L_-} \leq 1$, then (up to constants) $S^* = 1$ and $p^* = \nicefrac{1}{m}$ are optimal parameters and
        \begin{align*}
            T(p^*,S^*,[\tau_i]_{i=1}^n) = \Teq(m) + \frac{\delta^0 L_-}{\varepsilon} \Teq(1).
        \end{align*}
        \item If $\frac{L_{\pm}\sqrt{m}}{L_-} \geq m$, then (up to constants) $p^* = 1$ is an optimal parameter and
        \begin{align*}
            T(p^*,S,[\tau_i]_{i=1}^n) = \frac{\delta^0 L_-}{\varepsilon} \Teq(m)
        \end{align*}
        for any $S\in[m]$.
    \end{enumerate}
\end{theorem}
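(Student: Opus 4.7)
The plan is to reduce the three cases to a balancing exercise inside $F(S):=L_-\Teq(S)+L_{\pm}\sqrt{\Teq(m)\Teq(S)/S}$, the non-trivial branch in the optimized complexity from Theorem~\ref{thm:opt_S}. Two structural properties of $\Teq$ drive the whole argument: since $\Teq(S)=\min_{j\in[n]}(S+j)/H_j$ with $H_j:=\sum_{i=1}^j 1/\tau_i$ is the pointwise minimum of affine functions of $S$, it is concave and non-decreasing; combined with $\Teq(0)\geq 0$, this forces $\Teq(S)/S$ to be non-increasing. Moreover, for $S\geq n$ one has the sandwich $S/H_n\leq\Teq(S)\leq 2S/H_n$ (the lower bound from $(S+j)/H_j\geq S/H_n$, the upper from the choice $j=n$), which also applies to $\Teq(m)$ since $m\geq n$.

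Case 3 ($L_{\pm}\sqrt{m}\geq L_- m$) is immediate: the non-increasing property of $\Teq(S)/S$ gives $\sqrt{\Teq(m)\Teq(S)/S}\geq\Teq(m)/\sqrt{m}$ for every $S\leq m$, so $F(S)\geq L_{\pm}\Teq(m)/\sqrt{m}\geq L_-\Teq(m)$, and the first branch of the min in Theorem~\ref{thm:opt_S} wins. Formula~\eqref{eq:optimal_p_indep} then forces $p^*=1$, yielding $\frac{\delta^0 L_-}{\varepsilon}\Teq(m)$ plus the preprocessing $\Teq(m)$.

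Case 2 ($L_{\pm}\sqrt{m}\leq L_-$) I handle by evaluating at $S=1$: using $\Teq(m)\leq m\Teq(1)$ (the non-increasing property applied to $1\leq m$) and the case hypothesis, $L_{\pm}\sqrt{\Teq(m)\Teq(1)}\leq L_{\pm}\sqrt{m}\,\Teq(1)\leq L_-\Teq(1)$, hence $F(1)\leq 2L_-\Teq(1)$, and $F(S)\geq L_-\Teq(S)\geq L_-\Teq(1)$ is trivial, so $S^*=1$ is optimal up to constants. Substituting $S=1$ and $p=1/m$ directly into~\eqref{eq:compl_p_S_indep} and invoking the case hypothesis once more shows that $L_-+L_{\pm}\sqrt{(1-p)/(pS)}=\Theta(L_-)$ while $p\Teq(m)+(1-p)\Teq(1)=\Theta(\Teq(1))$, matching the claimed $\Teq(m)+\delta^0 L_-\Teq(1)/\varepsilon$.

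Case 1 ($n\leq L_{\pm}\sqrt{m}/L_-\leq m$) is the main step. With $S^*:=L_{\pm}\sqrt{m}/L_-$, both $S^*\geq n$ and $m\geq n$, so the sandwich applies to both $\Teq(S^*)$ and $\Teq(m)$; substituting it into $F(S^*)$ shows each of the two summands equals $\Theta(L_{\pm}\sqrt{m}/H_n)$, so they are balanced by design and $F(S^*)=\Theta(L_-\Teq(S^*))$. The matching lower bound on $\min_{S\in[m]}F(S)$ comes from splitting: for $S\geq S^*$, $F(S)\geq L_-\Teq(S)\geq L_-\Teq(S^*)$; for $S\leq S^*$, the non-increasing term together with $\Teq(S)/S\geq\Teq(m)/m$ gives $F(S)\geq L_{\pm}\Teq(m)/\sqrt{m}\geq L_{\pm}\sqrt{m}/H_n$. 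A last check using $S^*\leq m$ confirms $F(S^*)\leq L_-\Teq(m)$, so $F(S^*)$ is the winning branch in Theorem~\ref{thm:opt_S}, and~\eqref{eq:optimal_p_indep} then delivers $p^*=\Teq(S^*)/\Teq(m)=\Theta(S^*/m)=\Theta(L_{\pm}/(L_-\sqrt{m}))$. The main obstacle throughout is that $\Teq$ has a complicated piecewise-linear shape, so no explicit formula for it is workable; the plan hinges on replacing $\Teq$ by the sandwich bounds in the regime $S\geq n$ and by the monotonicity of $\Teq(S)/S$ elsewhere, which keeps every balancing argument to a two-line calculation.
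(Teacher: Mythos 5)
Your proposal is correct and reaches the same conclusions as the paper's proof. The global strategy is the same in both: reduce to minimizing $F(S)=L_-\Teq(S)+L_{\pm}\sqrt{\Teq(m)\Teq(S)/S}$ via Theorem~\ref{thm:opt_S}, balance the two summands at $S^*$, show $F(S)\gtrsim F(S^*)$ on either side of $S^*$, and read $p^*$ off Theorem~\ref{thm:opt_p_indep}. Where you diverge is in the technical scaffolding. The paper obtains the key identity $\frac{S^*}{m}\Teq(m)=\Theta(\Teq(S^*))$ (its inequalities \eqref{eq:ineq1tm_indep}--\eqref{eq:ineq3tmt_indep}) by direct algebraic manipulation inside the $\min_{j\in[n]}$ expression, exploiting $j\leq n\leq S^*$ and $S^*\leq m$ term by term, and then repeats a similar direct manipulation to lower-bound $F(S)$ for $S<S^*$. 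You instead isolate two reusable structural facts: the sandwich $\Teq(S)\asymp S/H_n$ for $S\geq n$, and the monotone decrease of $\Teq(S)/S$ from concavity of $\Teq$ plus $\Teq(0)\geq 0$. These two facts give the balancing and the one-sided lower bounds in a couple of lines each, whereas the paper's approach requires a fresh inequality chain for every bound. Your version is cleaner to verify and would port more easily to variants of $\Teq$; the paper's version avoids proving the sandwich and concavity lemmas explicitly, but only because they are implicitly re-derived in line each time they are needed. Cases~2 and~3 are handled essentially identically in both proofs (evaluate at $S=1$ / note $\sqrt{\Teq(m)\Teq(S)/S}$ is minimized at $S=m$, respectively).
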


\begin{proof}
    We fist consider the case when $n \leq \frac{L_{\pm} \sqrt{m}}{L_-} \leq m$.
    Since $j \leq n \leq \frac{L_{\pm} \sqrt{m}}{L_-}$, we have
    \begin{align}\label{eq:ineq1tm_indep}
        \frac{L_{\pm} \sqrt{m}}{L_-} + \frac{L_{\pm}}{L_- \sqrt{m}} j \geq \frac{1}{2} \left(\frac{L_{\pm} \sqrt{m}}{L_-} + j\right),
    \end{align}
    and from the assumption that $L_- \geq \frac{L_{\pm}}{\sqrt{m}}$ it follows that
    \begin{align}\label{eq:ineq2tm_indep}
        \frac{L_{\pm} \sqrt{m}}{L_-} + j \geq \frac{L_{\pm} \sqrt{m}}{L_-} + \frac{L_{\pm}}{L_- \sqrt{m}} j
    \end{align}
    for all $j \in [n]$. Thus,
    \begin{align*}
        \frac{L_{\pm}}{L_-\sqrt{m}} \Teq(m)
        = \min_{j\in[n]} \parens{\parens{\sum_{i=1}^j \frac{1}{\tau_i}}^{-1} \parens{\frac{L_{\pm} \sqrt{m}}{L_-} + \frac{L_{\pm}}{L_-\sqrt{m}} j}}
        \overset{\eqref{eq:ineq1tm_indep}}{\geq} \frac{1}{2} \Teq\parens{\frac{L_{\pm} \sqrt{m}}{L_-}},
    \end{align*}
    and
    \begin{align*}
        \frac{L_{\pm}}{L_-\sqrt{m}} \Teq(m)
        = \min_{j\in[n]} \parens{\parens{\sum_{i=1}^j \frac{1}{\tau_i}}^{-1} \parens{\frac{L_{\pm} \sqrt{m}}{L_-} + \frac{L_{\pm}}{L_-\sqrt{m}} j}}
        \overset{\eqref{eq:ineq2tm_indep}}{\leq} \Teq\parens{\frac{L_{\pm} \sqrt{m}}{L_-}}.
    \end{align*}
    It follows that
    \begin{align}
        \label{eq:ineq3tmt_indep}
        \frac{1}{2} \Teq\parens{\frac{L_{\pm} \sqrt{m}}{L_-}} \leq \frac{L_{\pm}}{L_-\sqrt{m}} \Teq(m) \leq \Teq\parens{\frac{L_{\pm} \sqrt{m}}{L_-}}.
    \end{align}
    Since $\frac{L_{\pm} \sqrt{m}}{L_-} \leq m$,  we have
    \begin{align}\label{eq:toptin_indep}
        \min_{S \in [m]} \left\{L_- \Teq(S) + L_{\pm} \frac{\sqrt{\Teq(m)\Teq(S)}}{\sqrt{S}}\right\}
        &\leq L_- \min_{S \in [m]} \left\{\Teq(S) + \sqrt{m} \frac{\sqrt{\Teq(m)\Teq(S)}}{\sqrt{S}}\right\} \nonumber \\
        &\leq 2 L_- \Teq(m),
    \end{align}
    and thus, according to the result from Theorem \ref{thm:opt_S}, it is sufficient to minimize
    \begin{align*}
        t'(S) \eqdef L_- \Teq(S) + L_{\pm} \frac{\sqrt{\Teq(m)\Teq(S)}}{\sqrt{S}}.
    \end{align*}
    First, let us note that
    \begin{align*}
        t'\parens{\frac{L_{\pm} \sqrt{m}}{L_-}} 
        &= L_- \Teq\parens{\frac{L_{\pm} \sqrt{m}}{L_-}}
        + L_{\pm} \sqrt{\frac{L_-}{L_{\pm} \sqrt{m}} \Teq(m) \Teq\parens{\frac{L_{\pm} \sqrt{m}}{L_-}}} \\
        &\overset{\eqref{eq:ineq3tmt_indep}}{\leq} L_- \Teq\parens{\frac{L_{\pm} \sqrt{m}}{L_-}}
        + L_{\pm} \sqrt{\frac{L_-}{L_{\pm} \sqrt{m}} \frac{L_-\sqrt{m}}{L_{\pm}} \Teq\parens{\frac{L_{\pm} \sqrt{m}}{L_-}} \Teq\parens{\frac{L_{\pm} \sqrt{m}}{L_-}}} \\
        &= 2 L_- \Teq\parens{\frac{L_{\pm} \sqrt{m}}{L_-}}.
    \end{align*}
    If $S \geq \frac{L_{\pm} \sqrt{m}}{L_-}$, then
    \begin{align*}
        t'(S) = L_- \Teq(S) + L_{\pm} \sqrt{\frac{\Teq(m)\Teq(S)}{S}} \geq L_- \Teq\parens{\frac{L_{\pm} \sqrt{m}}{L_-}}.
    \end{align*}
    Otherwise, if $S < \frac{L_{\pm} \sqrt{m}}{L_-}$, then
    \begin{align*}
        t'(S) &= L_- \Teq(S) + L_{\pm} \sqrt{\Teq(m)} \sqrt{\min_{j\in[n]} \parens{\parens{\sum_{i=1}^j \frac{1}{\tau_i}}^{-1} \parens{1+ \frac{j}{S}}}} \\
        &\geq L_{\pm} \sqrt{\Teq(m)} \sqrt{\min_{j\in[n]} \parens{\parens{\sum_{i=1}^j \frac{1}{\tau_i}}^{-1} \parens{1+ \frac{L_- j}{L_{\pm} \sqrt{m}}}}} \\
        &= L_{\pm} \sqrt{\Teq(m)} \sqrt{\frac{L_-}{L_{\pm} \sqrt{m}} \min_{j\in[n]} \parens{\parens{\sum_{i=1}^j \frac{1}{\tau_i}}^{-1} \parens{\frac{L_{\pm} \sqrt{m}}{L_-} + j}}} \\
        &= L_{\pm} \sqrt{\Teq(m) \frac{L_-}{L_{\pm} \sqrt{m}} \Teq\parens{\frac{L_{\pm} \sqrt{m}}{L_-}}} \\
        &\overset{\eqref{eq:ineq3tmt_indep}}{\geq} L_{\pm} \sqrt{\frac{1}{2} \Teq\parens{\frac{L_{\pm} \sqrt{m}}{L_-}} \frac{L_-\sqrt{m}}{L_{\pm}} \frac{L_-}{L_{\pm} \sqrt{m}} \Teq\parens{\frac{L_{\pm} \sqrt{m}}{L_-}}} \\
        &= \frac{L_-}{\sqrt{2}} \Teq\parens{\frac{L_{\pm} \sqrt{m}}{L_-}}.
    \end{align*}
    Therefore, the optimal choice is $S^* = \frac{L_{\pm} \sqrt{m}}{L_-}$, and by Theorem \ref{thm:opt_p_indep} and inequality \eqref{eq:toptin_indep}, $p$ should chosen to be
    \begin{align*}
        \frac{\Teq\parens{\frac{L_{\pm} \sqrt{m}}{L_-}}}{\Teq(m)}.
    \end{align*}
    Using \eqref{eq:ineq3tmt_indep}, we can conclude that $p^* = \frac{L_{\pm}}{L_- \sqrt{m}}$ is optimal, proving the first part of the Theorem.

    Next, consider the case when $\frac{L_{\pm}\sqrt{m}}{L_-} \leq 1$.
    By the reasoning above, it is sufficient to minimize
    \begin{align*}
        t'(S) \eqdef L_- \Teq(S) + L_{\pm} \frac{\sqrt{\Teq(m)\Teq(S)}}{\sqrt{S}}.
    \end{align*}
    First, let us note that
    \begin{align*}
        t'(1) &= L_- \Teq(1) + L_{\pm} \sqrt{\Teq(m)\Teq(1)}
        \leq L_- \Teq(1) + L_- \sqrt{\frac{\Teq(m)\Teq(1)}{m}}
        \leq 2 L_- \Teq(1),
    \end{align*}
    where the last inequality follows from the fact that for any $S\in[m]$, $\nicefrac{\Teq(S)}{S} \leq \Teq(1)$.
    On the other hand, if $S \geq 1$, then
    \begin{align*}
        t'(S) = L_- \Teq(S) + L_{\pm} \frac{\sqrt{\Teq(m)\Teq(S)}}{\sqrt{S}}
        \geq L_- \Teq(S)
        \geq L_- \Teq(1).
    \end{align*}
    Therefore, the optimal choice is $S^* = 1$. Then
    \begin{align*}
        &\frac{\delta^0}{\varepsilon} \parens{L_- + L_{\pm} \sqrt{m-1}} \parens{\frac{\Teq(m)}{m} + \parens{1-\frac{1}{m}} \Teq(1)} \\
        &\leq \frac{2\delta^0}{\varepsilon} \parens{L_- + \frac{L_-}{\sqrt{m}} \sqrt{m-1}} \Teq(1) \\
        &\leq \frac{4\delta^0 L_-}{\varepsilon} \Teq(1),
    \end{align*}
    while for any $p$
    \begin{align*}
        &\frac{\delta^0}{\varepsilon} \parens{L_- + L_{\pm} \sqrt{\frac{1-p}{p}}} \parens{p \Teq(m) + (1-p) \Teq(1)} \\
        &\geq \frac{\delta^0 L_-}{\varepsilon} \parens{p \Teq(m) + (1-p) \Teq(1)} \\
        &\geq \frac{\delta^0 L_-}{\varepsilon} \parens{p \Teq(1) + (1-p) \Teq(1)} \\
        &= \frac{\delta^0 L_-}{\varepsilon} \Teq(1).
    \end{align*}
    Hence $p^*=\nicefrac{1}{m}$.

    It remains to prove the third result. Suppose that $L_- < \frac{L_{\pm}}{\sqrt{m}}$. Then, the last part of the theorem follows from the fact that
    \begin{align*}
        \min_{S \in [m]} \left\{L_- \Teq(S) + L_{\pm} \frac{\sqrt{\Teq(m)\Teq(S)}}{\sqrt{S}}\right\} &\geq L_{\pm} \min_{S \in [m]} \left\{\frac{\sqrt{\Teq(m)\Teq(S)}}{\sqrt{S}}\right\} \\
        &= L_{\pm} \left\{\frac{\sqrt{\Teq(m)\Teq(m)}}{\sqrt{m}}\right\} \\
        &\geq L_- \Teq(m).
    \end{align*}
\end{proof}

In practice, the values of smoothness constants are often unknown. However, the algorithm can still be run with close to optimal parameters. 

\THEOREMSIMPLETIMECOMPLEXITYCHOICE*

\begin{proof}
    The proof is the same as in Theorem~\ref{thm:rpage_psopt_indep}. Indeed, 
    up to a constant factor, the time complexity~\eqref{eq:compl_p_S_indep} can be bounded as
    \begin{align*}
        T(p,S, [\tau_i]_{i=1}^n) &=  \Teq(m) + \frac{\delta^0}{\varepsilon} \parens{L_- + L_{\pm} \sqrt{\frac{1-p}{pS}}} \parens{p \Teq(m) + (1-p) \Teq(S)} \\
        &\leq  \Teq(m) + \frac{2 \delta^0 \max\{L_-,L_{\pm}\}}{\varepsilon} \parens{1 + \sqrt{\frac{1-p}{pS}}} \parens{p \Teq(m) + (1-p) \Teq(S)}.
    \end{align*}
    Therefore, by setting $L_{\pm} = L_-$ in Theorem \ref{thm:rpage_psopt_indep}, one can easily derive the parameters $p$ and $S$ that are optimal up to the smoothness constants. The time complexity \eqref{eq:time_comp} can be obtained by applying $S = \ceil{\sqrt{m}}$ and $p = \nicefrac{1}{\sqrt{m}}$ to \eqref{eq:compl_p_S_indep}.
\end{proof}

\newpage

\section{\algnamelarge{Freya PAGE} with Other Samplings}\label{sec:proofs_rennala_page_other}

Algorithm~\ref{algorithm:rennala_page_gen} can be adapted to accommodate other sampling methods. This brings us to the introduction of Algorithm~\ref{algorithm:rennala_page_gen_any}, which supports virtually any sampling strategy, formalized by the following mapping:

\begin{definition}[Sampling]\label{def:sampling}
    A \emph{sampling} is a random mapping $\mS_S$, which takes as an input a set of indices $\cI \eqdef \{a_1,\ldots,a_m\}$ and returns a (multi)set $\{a_{i_1}, \ldots, a_{i_S}\}$, where $a_{i_j} \in \cI$ for all $j\in[S]$.
\end{definition}

\begin{algorithm}[H]
    \caption{\algname{Freya PAGE} (with virtually any sampling)}
    \label{algorithm:rennala_page_gen_any}
    \begin{algorithmic}[1]
    \State \textbf{Parameters:} starting point $x^0 \in \R^d$, learning rate $\gamma > 0$, minibatch size $S$, sampling $\mS_S$, probability $p \in (0, 1]$, initialization $g^0 = \nabla f(x^0)$ using ${\green \textnormal{\algname{ComputeGradient}}([m], x^{0})}$ \quad (Alg.~\ref{algorithm:ga_asynch_main_new})
    \For{$k = 0, 1, \ldots, K-1$}
        \State $x^{k+1} = x^k - \gamma g^k$
        \State Sample $c^k \sim \text{Bernoulli}(p)$
        \If{$c^k=1$}
            \State {\green $\nabla f(x^{k+1}) = \textnormal{\algname{ComputeGradient}}(x^{k+1})$} \hfill (Alg.~\ref{algorithm:ga_asynch_main_new})
            \State $g^{k+1} = \nabla f(x^{k+1})$
        \Else
            \State Sample indices $\cS^k = \mS_S([m])$
            \State {\green $\frac{1}{S} \sum\limits_{i \in \cS^k}\left(\nabla f_i(x^{k+1}) - \nabla f_i(x^{k})\right)$} \newline \hspace*{1cm} {\green $= \textnormal{\algname{ComputeBatchDifferenceAnySampling}}(\cS^k, x^{k+1}, x^k)$} \hfill (Alg.~\ref{algorithm:ga_asynch_main_new_full_diff})
            \State $g^{k+1} = g^k + \frac{1}{S} \sum\limits_{i \in \cS^k}\left(\nabla f_i(x^{k+1}) - \nabla f_i(x^{k})\right)$ 
        \EndIf
    \EndFor
    \end{algorithmic}
\end{algorithm}

\begin{algorithm}[H]
    \centering
    \caption{\algname{ComputeBatchDifferenceAnySampling}($\cS$, $x$, $y$)}
    \label{algorithm:ga_asynch_main_new_full_diff}
    \begin{algorithmic}[1]
    \State \textbf{Input:} multiset $\cS$, points $x,y\in \R^d$
    \State Init $g=0\in \R^d$, multiset $\cM = \emptyset$
    \State Broadcast $x$ to all workers
    \State For each worker, sample $j$ from $\cS$ (uniformly) and ask it to calculate $\nabla f_j(x) - \nabla f_j(y)$
        \While{$\cM \neq \cS$}
            \State Wait for $\nabla f_p(x) - \nabla f_p(y)$ from a worker
            \If{$p \in \cS \backslash \cM$}
            \State $g \gets g + \frac{1}{\abs{\cS}} \left(\nabla f_p(x) - \nabla f_p(y)\right)$
            \State Update $\cM \gets \cM \cup \{p\}$
            \EndIf
            \State Sample $j$ from $\cS \backslash \cM$ (uniformly)
            and ask this worker to calculate $\nabla f_{j}(x) - \nabla f_{j}(y)$
        \EndWhile
        \State Return $g = \frac{1}{\abs{\cS}} \sum\limits_{i \in \cS} \left(\nabla f_i(x) - \nabla f_i(y)\right)$
    \end{algorithmic}
\end{algorithm}

The only difference is that instead of \algname{ComputeBatchDifference} (Algorithm~\ref{algorithm:ga_asynch_main_new_full}), Algorithm~\ref{algorithm:rennala_page_gen_any} uses a new subroutine, called \algname{ComputeBatchDifferenceAnySampling} (Algorithm~\ref{algorithm:ga_asynch_main_diff}). 

For this algorithm, we can prove the following time complexity guarantees.

\begin{restatable}{theorem}{THMFULLGRADTIMEFULLGENERIC}
    \label{thm:calculate_gradient_diff}
    The expected time needed by Algorithm~\ref{algorithm:ga_asynch_main_new_full_diff} to calculate $g = \frac{1}{\abs{\cS}} \sum\limits_{i \in \cS} (\nabla f_i(x) - \nabla f_i(y))$ is at most
    \begin{align}
        24 \min_{j\in[n]} \parens{\parens{\sum_{i=1}^j \frac{1}{\tau_i}}^{-1} (\abs{\cS} + \min\{\abs{\cS},n\} \log \left(\min\{\abs{\cS},n\}\right) + j)}
    \end{align}
    seconds. 
\end{restatable}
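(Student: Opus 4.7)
The plan is to reduce Theorem~\ref{thm:calculate_gradient_diff} directly to Theorem~\ref{thm:compute_batch}. Observe that Algorithm~\ref{algorithm:ga_asynch_main_new_full_diff} is structurally identical to Algorithm~\ref{algorithm:ga_asynch_main_new_full}: same broadcast step, same ``wait for a worker, check whether the returned index is still in $\cS \setminus \cM$, update and resample'' loop, same termination condition $\cM = \cS$. The only difference is what each worker computes per assignment: a stochastic gradient difference $\nabla f_j(x) - \nabla f_j(y)$ rather than a single stochastic gradient $\nabla f_j(x)$. Under the paper's setup, worker $i$ evaluates any one $\nabla f_j$ in at most $\tau_i$ seconds, so it can produce one difference by evaluating the two gradients sequentially in at most $2\tau_i$ seconds.

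Accordingly, I would run the Theorem~\ref{thm:compute_batch} argument verbatim but with the per-task time bound $\tau_i$ replaced by $2\tau_i$. Concretely: fix any $k \in [n]$, split workers into ``fast'' ($\tau_i \leq \tau_k$) and ``slow'' as before; define the Markov process on $\cU = \{$ indices in $\cS \setminus \cM$ not currently assigned to a fast worker$\}$; derive the same geometric-like bound $\Exp{\eta_p} \leq 1 + \min\{k, \abs{\cS}\}/p$ for the number of fast-worker samples at state $\abs{\cU} = p$; and sum over $p = 1, \dots, \abs{\cS}$ to obtain a bound on the total number of fast-worker tasks needed. The only substitution is that, during the time-to-throughput conversion, we use $\sum_{i=1}^{k}\flr{t/(2\tau_i)} \geq \frac{1}{4}\sum_{i=1}^{k} t/\tau_i$ (for $t$ large enough), which exactly doubles the constant appearing in front.

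Carrying this through and choosing the minimising index $k = \arg\min_{j \in [n]} ((\sum_{i=1}^j 1/\tau_i)^{-1}(\abs{\cS} + \min\{\abs{\cS},n\}\log(\min\{\abs{\cS},n\}) + j))$ — with Lemma~\ref{lemma:techn1} supplying $2\tau_k$ bounded by the same min — turns the factor $12$ of Theorem~\ref{thm:compute_batch} into $24$, which is the claimed bound. There is no genuine obstacle here; the statement is essentially Theorem~\ref{thm:compute_batch} applied to a machine model with doubled per-task compute times. The only mild care needed is to make sure every step of the Markov-chain/coupon-collector bookkeeping is insensitive to replacing single-gradient evaluations by gradient-difference evaluations, which follows because the argument depends on the computation pattern only through the time budget $\tau_i$ per assigned task.
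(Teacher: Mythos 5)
Your proposal is correct and matches the paper's own argument: the paper's proof also reduces Theorem~\ref{thm:calculate_gradient_diff} to Theorem~\ref{thm:compute_batch}, noting only that computing $\nabla f_i(x) - \nabla f_i(y)$ doubles the per-task time, which doubles the constant from $12$ to $24$. You have simply spelled out the bookkeeping (replacing $\tau_i$ by $2\tau_i$ in the throughput inequality and in the tail wait of Lemma~\ref{lemma:techn1}) that the paper leaves implicit.
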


\begin{proof}
    The proof of this theorem is the same as the proof of Theorem~\ref{thm:compute_batch}. We only have to multiply~\eqref{eq:COVSRDFTGtaGOfFWmaO} by $2$ because Algorithm~\ref{algorithm:ga_asynch_main_diff} calculates $\nabla f_i(x) - \nabla f_i(y)$ instead of $\nabla f_i(x).$
\end{proof}

While changing the sampling strategy might affect the \emph{iteration complexity} of the method, for a fixed minibatch size $S$, the \emph{time complexity of a single iteration} remains unchanged. Thus, having established the expected time needed by the algorithm to perform a single iteration (i.e., to collect a minibatch of stochastic gradients of the required size), one can simply multiply it by the iteration complexity of the method determined for any supported sampling technique to obtain the resulting time complexity.

With this in mind, we now analyse the time complexity of Algorithm~\ref{algorithm:rennala_page_gen_any} with $2$ different sampling techniques: nice sampling and importance sampling. However, it can be analyzed with virtually any other unbiased sampling \citep{tyurin2022sharper}.

\subsection{Nice sampling}
\label{sec:nice}

\emph{Nice sampling} returns a random subset of fixed cardinality $S$ chosen uniformly from $[m]$. Unlike \emph{uniform sampling with replacement} used in Algorithm~\ref{algorithm:ga_asynch_main_new_full}, which returns a random multiset (that can include repetitions), the samples obtained by nice sampling are distinct. The iteration complexity of Algorithm~\ref{algorithm:rennala_page_gen_any} with nice sampling is given by the following theorem.

\begin{theorem}[\citet{tyurin2022sharper}, Section $3$]\label{thm:page_iter_nice}
    Let Assumptions \ref{as:smooth_lower_bdd}, \ref{as:L+} and \ref{as:L_pm} hold. Choose a minibatch size $S\in[m]$, a probability $p \in (0, 1]$ and the stepsize
    \begin{align*}
        \gamma = \frac{1}{L_- + L_{\pm} \sqrt{\frac{(1-p)(m-S)}{p(m-1)S}}}.
    \end{align*}
    Then, the number of iteration needed by Algorithm~\ref{algorithm:rennala_page_gen_any} with \emph{nice sampling} to reach an $\varepsilon$-stationary point is
    \begin{align}\label{eq:page_iter_nice}
        K = \frac{2 \delta^0}{\varepsilon} \left(L_- + L_{\pm} \sqrt{\frac{(1-p)(m-S)}{p(m-1)S}}\right).
    \end{align}
\end{theorem}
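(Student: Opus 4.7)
The plan is to appeal directly to the general \algname{PAGE} iteration-complexity framework of \citet{tyurin2022sharper}, which analyses the update $g^{k+1} = g^k + \frac{1}{S}\sum_{i\in\cS^k}(\nabla f_i(x^{k+1}) - \nabla f_i(x^k))$ with probability $1-p$ under any unbiased minibatch sampling, parameterized by a single variance constant. Since Algorithm~\ref{algorithm:rennala_page_gen_any} produces exactly this update once $\cS^k$ is drawn, the problem reduces to computing the correct variance constant for nice sampling and substituting into the known stepsize rule.

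First, I would identify the variance quantity that drives the analysis. For any vectors $a_1,\ldots,a_m \in \R^d$ with mean $\bar a$, nice sampling $\cS$ of size $S$ satisfies the finite-population variance identity
\begin{equation*}
\Exp{\norm{\tfrac{1}{S}\sum_{i\in\cS} a_i - \bar a}^2} = \tfrac{m-S}{S(m-1)} \cdot \tfrac{1}{m}\sum_{i=1}^m \norm{a_i - \bar a}^2.
\end{equation*}
Applying this with $a_i = \nabla f_i(x^{k+1}) - \nabla f_i(x^k)$, so that $\bar a = \nabla f(x^{k+1}) - \nabla f(x^k)$, and invoking Assumption~\ref{as:L_pm} yields the conditional variance bound $\Exp{\norm{g^{k+1} - \nabla f(x^{k+1})}^2 \,|\, g^k=\nabla f(x^k)} \leq \tfrac{m-S}{S(m-1)} L_{\pm}^2 \norm{x^{k+1}-x^k}^2$ on the sampling iterations.

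Second, the standard \algname{PAGE} Lyapunov argument combines the $L_-$-smoothness descent lemma $f(x^{k+1}) \leq f(x^k) - \tfrac{\gamma}{2}\norm{\nabla f(x^k)}^2 - (\tfrac{1}{2\gamma} - \tfrac{L_-}{2})\norm{x^{k+1}-x^k}^2 + \tfrac{\gamma}{2}\norm{g^k - \nabla f(x^k)}^2$ with the one-step expansion of $\Exp{\norm{g^{k+1}-\nabla f(x^{k+1})}^2}$, which unrolls geometrically because the full-gradient step resets the error with probability $p$. The resulting Lyapunov function $\Phi^k = f(x^k) - f^* + \tfrac{\gamma(1-p)}{2p}\norm{g^k - \nabla f(x^k)}^2$ decreases by $\tfrac{\gamma}{2}\Exp{\norm{\nabla f(x^k)}^2}$ per iteration provided
\begin{equation*}
\tfrac{1}{\gamma} - L_- \geq L_{\pm}^2 \cdot \tfrac{1-p}{p} \cdot \tfrac{m-S}{S(m-1)} \cdot \gamma,
\end{equation*}
which is solved exactly by the stated $\gamma = (L_- + L_{\pm}\sqrt{(1-p)(m-S)/(p(m-1)S)})^{-1}$. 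Telescoping then gives the iteration count \eqref{eq:page_iter_nice} via $K \geq 2\delta^0/(\gamma \varepsilon)$.

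The main obstacle is essentially bookkeeping: matching the finite-population correction $\tfrac{m-S}{S(m-1)}$ of nice sampling to the generic variance coefficient used in the framework of \citet{tyurin2022sharper}, and verifying that the quadratic in $\gamma$ from the Lyapunov descent is tight at the claimed stepsize. No new probabilistic or analytic idea beyond what appears in the cited work is needed, and the two endpoint cases $S=m$ (recovering \algname{GD}) and $p=1$ (full gradient every iteration) serve as useful sanity checks for the constants.
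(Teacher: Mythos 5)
The paper proves this by a one-line citation to Theorem 6 and Table 1 of \citet{tyurin2022sharper}; your reconstruction of what that citation stands for is essentially correct: the finite-population variance factor $\frac{m-S}{S(m-1)}$ for nice sampling, the descent lemma, the geometric error recursion with reset probability $p$, and the stepsize condition $\frac{1}{\gamma} - L_- \geq \gamma L_{\pm}^2 \frac{(1-p)(m-S)}{p(m-1)S}$ are all exactly what drives the cited result. One small bookkeeping slip: for the $\norm{g^k - \nabla f(x^k)}^2$ terms to telescope in $\Phi^k = f(x^k) - f^* + c\,\norm{g^k - \nabla f(x^k)}^2$, you need $\frac{\gamma}{2} + c(1-p) = c$, which gives $c = \frac{\gamma}{2p}$ rather than $\frac{\gamma(1-p)}{2p}$; since $g^0 = \nabla f(x^0)$ makes $\Phi^0 = \delta^0$ either way and your stepsize condition already matches the corrected coefficient, the final count \eqref{eq:page_iter_nice} is unaffected.
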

\begin{proof}
    The result follows from Theorem $6$ and Table 1 from \citep{tyurin2022sharper}.
\end{proof}

\begin{theorem}\label{thm:page_time_compl_nice}
    Consider the assumptions and parameters from Theorem~\ref{thm:page_iter_nice} and Assumption~\ref{ass:m_is_large}. Up to a constant factor, the time complexity of Algorithm~\ref{algorithm:rennala_page_gen_any} is
    \begin{align}\label{eq:compl_p_S_nice}
        &T(p,S, [\tau_i]_{i=1}^n) = \Teq(m, [\tau_i]_{i=1}^n)
        + \frac{\delta^0}{\varepsilon} \parens{L_- + L_{\pm} \sqrt{\frac{(1-p)(m-S)}{p(m-1)S}}} \times \nonumber \\
        &\times \brac{p \times \Teq(m, [\tau_i]_{i=1}^n) + (1-p)  \times \Teq(S + \min\{S,n\} \log \left(\min\{S,n\}\right), [\tau_i]_{i=1}^n)},
    \end{align}
    where $t^*$ is defined from Definition~\ref{def:time_budget}.
\end{theorem}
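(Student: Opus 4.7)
The plan is to combine the iteration complexity result stated in Theorem~\ref{thm:page_iter_nice} with the per-iteration subroutine time bounds from Theorems~\ref{thm:full_grad_time} and \ref{thm:calculate_gradient_diff}, following the same structure as the proof of Theorem~\ref{thm:page_time_compl}.

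First, I would invoke Theorem~\ref{thm:page_iter_nice} to obtain the iteration complexity
\begin{align*}
    K = \frac{2\delta^0}{\varepsilon}\parens{L_- + L_{\pm}\sqrt{\frac{(1-p)(m-S)}{p(m-1)S}}}
\end{align*}
for Algorithm~\ref{algorithm:rennala_page_gen_any} with nice sampling and the prescribed stepsize $\gamma$. Since the statement only asserts the total time up to a constant factor, multiplying $K$ by the expected per-iteration cost will give the claimed bound.

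Next, I would decompose the per-iteration cost according to the Bernoulli coin $c^k$. When $c^k = 1$ (probability $p$), the algorithm invokes \algname{ComputeGradient} on $[m]$; by Theorem~\ref{thm:full_grad_time} this costs at most $12\cdot\Teq(m + \min\{m,n\}\log(\min\{m,n\}), [\tau_i]_{i=1}^n)$ in expectation, which, under Assumption~\ref{ass:m_is_large} (so that $\min\{m,n\}\log(\min\{m,n\}) \leq n\log n \leq m$), collapses up to a constant factor to $\Teq(m, [\tau_i]_{i=1}^n)$. When $c^k = 0$ (probability $1-p$), the algorithm draws $\cS^k$ by nice sampling (a set of $S$ distinct indices) and calls \algname{ComputeBatchDifferenceAnySampling}; Theorem~\ref{thm:calculate_gradient_diff} then bounds the expected time by $24\cdot\Teq(S + \min\{S,n\}\log(\min\{S,n\}), [\tau_i]_{i=1}^n)$. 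Note that, in contrast to the full-gradient case, we have no assumption guaranteeing $S \geq n\log n$, so this logarithmic term must be carried through into the final expression.

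Finally, I would add the preprocessing cost for forming $g^0 = \nabla f(x^0)$ via \algname{ComputeGradient}$([m], x^0)$, which is again $\Teq(m, [\tau_i]_{i=1}^n)$ up to constants by the same argument as above. Summing the preprocessing term with $K$ copies of the per-iteration expectation gives
\begin{align*}
    T(p,S,[\tau_i]_{i=1}^n) &\lesssim \Teq(m, [\tau_i]_{i=1}^n) \\
    &\quad + K\cdot\brac{p\cdot \Teq(m, [\tau_i]_{i=1}^n) + (1-p)\cdot \Teq(S + \min\{S,n\}\log(\min\{S,n\}), [\tau_i]_{i=1}^n)},
\end{align*}
which, after substituting the value of $K$, matches \eqref{eq:compl_p_S_nice}. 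The main (minor) obstacle is just bookkeeping of logarithmic factors: the $\min\{m,n\}\log(\min\{m,n\})$ contribution is absorbed by Assumption~\ref{ass:m_is_large}, while the $\min\{S,n\}\log(\min\{S,n\})$ contribution genuinely persists in the bound because nice sampling forces the inner routine to collect $S$ \emph{unique} indices, incurring the coupon-collector-type overhead already quantified in Theorem~\ref{thm:calculate_gradient_diff}. Everything else is linearity of expectation.
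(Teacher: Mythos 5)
Your proof is correct and follows essentially the same approach as the paper: combine the iteration complexity from Theorem~\ref{thm:page_iter_nice} with the per-call expected time bounds from Theorems~\ref{thm:full_grad_time} and \ref{thm:calculate_gradient_diff}, use Assumption~\ref{ass:m_is_large} to absorb the $\min\{m,n\}\log(\min\{m,n\})$ term but retain the $\min\{S,n\}\log(\min\{S,n\})$ term, and add the preprocessing cost.
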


\begin{remark}
    Compared to Theorem~\ref{thm:page_time_compl}, which uses \emph{uniform sampling with replacement}, the guarantees for \emph{nice sampling} are slightly worse: the term $\Teq(S, [\tau_i]_{i=1}^n)$ from Theorem~\ref{thm:page_time_compl} here is replaced with $\Teq(S + \min\{S,n\} \log \left(\min\{S,n\}\right), [\tau_i]_{i=1}^n).$ Ignoring the logarithmic term $\log \left(\min\{S,n\}\right)$ ($\leq \log \left(\min\{m,n\}\right)$), the result from Theorem~\ref{thm:page_time_compl_nice} is equivalent to that in Theorem~\ref{thm:page_time_compl}. Thus, Theorems~\ref{thm:opt_S_main} and \ref{cor:nice_upper_bound} hold also for the nice sampling (up to logarithmic factors).
\end{remark}

\begin{proof}
    We use the same reasoning as in the proof of Theorem \ref{thm:page_time_compl}. With probability $p,$ the algorithm calculates the full gradients, which by Theorem~\ref{thm:full_grad_time} requires
    \begin{align*}
        &\Theta\left(\min_{j\in[n]} \parens{\parens{\sum_{i=1}^j \frac{1}{\tau_i}}^{-1} (m + \min\{m,n\} \log \left(\min\{m,n\}\right) + j)}\right) \\
        &=\Theta\left(\min_{j\in[n]} \parens{\parens{\sum_{i=1}^j \frac{1}{\tau_i}}^{-1} (m + j)}\right) \\
    \end{align*}
    seconds, where we use Assumption~\ref{ass:m_is_large}. With probability $1 - p,$ the algorithm calls \algname{ComputeBatchDifferenceAnySampling}, which by Theorem~\ref{thm:calculate_gradient_diff} requires
    \begin{align*}
        \Theta\left(\min_{j\in[n]} \parens{\parens{\sum_{i=1}^j \frac{1}{\tau_i}}^{-1} (S + \min\{S,n\} \log \left(\min\{S,n\}\right) + j)}\right)
    \end{align*}
    seconds. One can obtain the result by multiplying the iteration complexity \eqref{eq:page_iter_nice} by the expected time needed to collect the required number of stochastic gradients per iteration and adding the preprocessing time.
\end{proof}

\subsection{Importance sampling}
\label{sec:importance}

Here we additionally assume $L_i$-smoothness of the local objective functions $f_i$.

\begin{assumption}\label{as:worker_smooth}
    The functions $f_i$ are $L_i$-smooth. We denote $\bar{L} \eqdef \frac{1}{m} \sum_{j=1}^m L_j$ and $L_{\max} \eqdef \max_{i \in [n]} L_i.$
\end{assumption}

Importance sampling is a sampling technique that returns a multiset of indices \emph{with repetitions}. Index $j$ is included in the multiset with probability $\frac{L_j}{\sum_{i=1}^{n} L_j}.$

\begin{theorem}[\citet{tyurin2022sharper}, Section $3$]\label{thm:page_iter_import}
    Let Assumptions \ref{as:smooth_lower_bdd} and \ref{as:worker_smooth} hold. Choose a minibatch size $S\in[m]$, probability $p \in (0, 1]$ and the stepsize
    \begin{align*}
        \gamma = \frac{1}{L_- + \bar{L} \sqrt{\frac{1-p}{pS}}}.
    \end{align*}
    Then, the number of iteration needed by Algorithm~\ref{algorithm:rennala_page_gen} with \emph{importance sampling} to reach an $\varepsilon$-stationary point is
    \begin{align}\label{eq:iter_compl_import}
        K = \frac{2 \delta^0}{\varepsilon} \parens{L_- + \bar{L} \sqrt{\frac{1-p}{pS}}}.
    \end{align}
\end{theorem}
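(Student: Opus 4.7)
The plan is to follow the generic PAGE analysis of \citet{tyurin2022sharper} (Theorem~6), which reduces iteration complexity bounds for variance-reduced \algname{PAGE}-type methods to verifying two properties of the estimator: unbiasedness of the increment, and a variance inequality of the form
\[
\mathbb{E}\bigl[\|g^{k+1}-\nabla f(x^{k+1})\|^2 \mid \mathcal{F}_k\bigr] \leq (1-p)\|g^k-\nabla f(x^k)\|^2 + (1-p)\frac{\mathcal{B}^2}{S}\|x^{k+1}-x^k\|^2.
\]
With these in hand, the framework yields convergence in $K = (2\delta^0/\varepsilon)(L_- + \mathcal B\sqrt{(1-p)/(pS)})$ iterations with stepsize $\gamma = 1/(L_- + \mathcal B\sqrt{(1-p)/(pS)})$. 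All that is specific to the sampling scheme is identifying the constant $\mathcal B$; for importance sampling I will show $\mathcal B = \bar L$.

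First, I would write the importance-sampled increment in its reweighted form: if $j$ is drawn with probability $p_j = L_j/(m\bar L)$, the unbiased finite-difference estimator replacing line~10 of Algorithm~\ref{algorithm:rennala_page_gen_any} is
\[
g^{k+1} = g^k + \frac{1}{S}\sum_{i\in\cS^k}\frac{\bar L}{L_{j_i}}\bigl(\nabla f_{j_i}(x^{k+1})-\nabla f_{j_i}(x^k)\bigr).
\]
Unbiasedness is immediate: $\sum_j p_j \cdot (\bar L/L_j)\cdot \Delta_j = (1/m)\sum_j \Delta_j$ with $\Delta_j \eqdef \nabla f_j(x^{k+1})-\nabla f_j(x^k)$. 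Combined with the initialization $g^0 = \nabla f(x^0)$ and the full-gradient branch $c^k=1$, induction gives $\mathbb{E}[g^{k+1}-\nabla f(x^{k+1}) \mid \mathcal F_k] = (1-p)(g^k-\nabla f(x^k))$.

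Next, I would bound the variance of a single importance-sampled term. A direct calculation gives
\[
\mathbb{E}_j\left\|\frac{\bar L}{L_j}\Delta_j\right\|^2 = \sum_{j=1}^m \frac{L_j}{m\bar L}\cdot\frac{\bar L^2}{L_j^2}\|\Delta_j\|^2 = \frac{\bar L}{m}\sum_{j=1}^m \frac{\|\Delta_j\|^2}{L_j}.
\]
Applying $L_j$-smoothness from Assumption~\ref{as:worker_smooth}, i.e.\ $\|\Delta_j\|^2 \leq L_j^2\|x^{k+1}-x^k\|^2$, telescopes this into $\frac{\bar L}{m}\sum_j L_j\|x^{k+1}-x^k\|^2 = \bar L^2\|x^{k+1}-x^k\|^2$. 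Since the $S$ indices in $\cS^k$ are i.i.d., the minibatched variance is $\bar L^2/S \cdot \|x^{k+1}-x^k\|^2$, which yields exactly the generic variance recursion with $\mathcal B = \bar L$.

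Finally, I would plug $\mathcal B = \bar L$ into the generic Lyapunov argument: take $\Phi^k = f(x^k) - f^* + \tfrac{\gamma}{2p}\|g^k-\nabla f(x^k)\|^2$, apply $L_-$-smoothness of $f$ to the descent step $x^{k+1}=x^k-\gamma g^k$, substitute the variance recursion, and verify that the stepsize $\gamma = (L_- + \bar L\sqrt{(1-p)/(pS)})^{-1}$ makes the coefficient of $\|x^{k+1}-x^k\|^2$ non-positive (using $L_-\gamma + (1-p)\bar L^2\gamma^2/(pS) \leq L_-\gamma + \bar L\sqrt{(1-p)/(pS)}\cdot\gamma = 1$). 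Telescoping $\mathbb{E}[\Phi^{k+1}]\leq \Phi^k - (\gamma/2)\|\nabla f(x^k)\|^2$ over $k=0,\dots,K-1$ and using $\Phi^0 = \delta^0$ (since $g^0=\nabla f(x^0)$) produces \eqref{eq:iter_compl_import}. The only step where the importance-sampling structure enters is the variance calculation above; the Lyapunov analysis is identical to the one already invoked for Theorems~\ref{thm:page_rate} and \ref{thm:page_iter_nice}, and the main (mild) obstacle is correctly handling the cancellation between the sampling weights $L_j/(m\bar L)$, the reweighting factor $\bar L/L_j$, and the $L_j^2$ arising from smoothness.
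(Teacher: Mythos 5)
Your proposal is correct, and it follows the same route the paper relies on. The paper's own "proof" of this theorem is simply a citation to \citet{tyurin2022sharper} (Theorem~6 and Section~3), accompanied by the remark that the only change relative to the uniform-sampling result is the replacement of $L_{\pm}$ by $\bar L$; your proposal reconstructs exactly the calculation behind that claim: identify the reweighted importance-sampling estimator $\frac{\bar L}{L_j}\bigl(\nabla f_j(x^{k+1})-\nabla f_j(x^k)\bigr)$, verify unbiasedness, show the cancellation $p_j\cdot(\bar L/L_j)^2\cdot L_j^2 = \bar L^2/m$ that yields the variance constant $\mathcal B=\bar L$, and plug into the generic PAGE Lyapunov descent with the stepsize condition $L_-\gamma + (1-p)\bar L^2\gamma^2/(pS)\leq 1$. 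This is the same argument as in the cited reference, not a genuinely different approach, so nothing further needs comparing.
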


The complexity \eqref{eq:iter_compl_import} is nearly identical to \eqref{eq:iter_compl_indep} and \eqref{eq:page_iter_nice}, with the only difference being the dependence on $\bar{L}$ rather than $L_{\pm}$. Thus, all the results up to constant and logarithmic factors can be derived using the same methodology as that outlined in Section~\ref{sec:theory}, with the simple substitution of $L_{\pm}$ with $\bar{L}$.

\section{Dynamic Bounds}

As noted in Section \ref{sec:time_var_times}, the results from Section \ref{sec:proofs_rennala_page} can be easily generalized to iteration-dependent processing times.

\begin{theorem}\label{thm:page_time_compl_var}
    Consider the assumptions and the parameters from Theorem~\ref{thm:page_rate} and Assumption~\ref{ass:m_is_large}. Up to a constant factor, the time complexity of \algname{Freya PAGE} (Algorithm~\ref{algorithm:rennala_page_gen}) with iteration-dependent processing times $\{\tau_i^k\},$ which are defined in Section~\ref{sec:time_var_times}, is at most
    {\small\begin{align}\label{eq:compl_p_S_var}
    &\min_{j\in[n]} \parens{\parens{\sum_{i=1}^j \frac{1}{\tau_{\pi_{-1,i}}^{-1}}}^{-1} (m+j)}\nonumber \\
        &\qquad+ \sum_{k=0}^{\ceil{K_{\algname{PAGE}}}} \brac{p \min_{j\in[n]} \parens{\parens{\sum_{i=1}^j \frac{1}{\tau_{\pi_{k,i}}^k}}^{-1} (m+j)}
        + (1-p) \min_{j\in[n]} \parens{\parens{\sum_{i=1}^j \frac{1}{\tau_{\pi_{k,i}}^k}}^{-1} (S+j)}},
    \end{align}}
    to find an $\varepsilon$-stationary point, where $p \in (0, 1]$ and $S \in \N$ are free parameters, and $\pi_{k,\cdot}$ is a permutation such that $\tau_{\pi_{k,1}}^k \leq \dots \leq \tau_{\pi_{k,n}}^k$ for all $k \geq -1.$
\end{theorem}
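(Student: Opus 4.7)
The plan is to decompose the total runtime of Algorithm~\ref{algorithm:rennala_page_gen} into the preprocessing cost plus the sum of per-iteration costs, and then invoke the single-call time complexity results of Theorems~\ref{thm:full_grad_time} and \ref{thm:bath_diff} on each call, now using the processing-time bounds that are in force during that particular call.

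First, I would observe that the iteration complexity bound $K_{\algname{PAGE}}$ from Theorem~\ref{thm:page_rate} is purely an analytic statement about the update rule of \algname{Freya PAGE} and makes no use of $\{\tau_i\}$. It therefore remains valid verbatim in the dynamic setting: after $\ceil{K_{\algname{PAGE}}}$ iterations, $\Exp{\|\nabla f(\hat{x}^K)\|^2}\leq\varepsilon$, regardless of how the bounds $\tau_i^k$ evolve across iterations.

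Next, I would bound the wall-clock cost of each individual subroutine call. The preprocessing call $\textnormal{\algname{ComputeGradient}}(x^0)$ is executed under the bounds $\{\tau_i^{-1}\}_{i=1}^n$. Theorems~\ref{thm:full_grad_time} and \ref{thm:bath_diff} are proved assuming the workers are labelled in nondecreasing order of their bounds, but that ordering is only a notational convention inside the proofs — nothing in Algorithms~\ref{algorithm:ga_asynch_main_new} and~\ref{algorithm:ga_asynch_main_diff} depends on worker identity. Hence, after relabelling the workers in iteration $k$ via the permutation $\pi_{k,\cdot}$ so that $\tau_{\pi_{k,1}}^k\leq\dots\leq\tau_{\pi_{k,n}}^k$, the same theorems apply directly. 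Invoking Theorem~\ref{thm:full_grad_time} together with Assumption~\ref{ass:m_is_large} (which swallows the $\min\{m,n\}\log\min\{m,n\}$ term into a constant factor), the preprocessing step takes at most
\begin{equation*}
    O\!\left(\min_{j\in[n]}\parens{\parens{\sum_{i=1}^j\frac{1}{\tau_{\pi_{-1,i}}^{-1}}}^{-1}(m+j)}\right)
\end{equation*}
seconds in expectation. For each iteration $k\in\{0,\dots,\ceil{K_{\algname{PAGE}}}-1\}$, on the event $c^k=1$ (probability $p$) the algorithm calls \algname{ComputeGradient} under the bounds $\{\tau_i^k\}$, which by the same reasoning costs at most $O(\min_{j\in[n]}((\sum_{i=1}^j 1/\tau_{\pi_{k,i}}^k)^{-1}(m+j)))$ seconds; on the event $c^k=0$ (probability $1-p$) it instead calls \algname{ComputeBatchDifference} with batch size $S$, which by Theorem~\ref{thm:bath_diff} costs at most $O(\min_{j\in[n]}((\sum_{i=1}^j 1/\tau_{\pi_{k,i}}^k)^{-1}(S+j)))$ seconds. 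Taking expectation over $c^k$ and using linearity yields the expected per-iteration cost appearing inside the sum in \eqref{eq:compl_p_S_var}.

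Finally, I would sum the preprocessing cost and the per-iteration costs over $k=0,\dots,\ceil{K_{\algname{PAGE}}}$ to obtain \eqref{eq:compl_p_S_var}. The only mildly subtle point — which is the main thing to verify carefully — is that the per-iteration time bound really depends only on the bounds $\{\tau_i^k\}$ active during iteration $k$, and not on past or future $\{\tau_i^\ell\}$. This is fine because each call to \algname{ComputeGradient} or \algname{ComputeBatchDifference} is a self-contained synchronization barrier: once it returns, the algorithm moves to the next iteration, so the bound $\tau_i^k$ needs to hold only for the duration of that single call, exactly as assumed in Section~\ref{sec:time_var_times}. No other step of the proof of Theorem~\ref{thm:page_time_compl} uses global information about $\{\tau_i\}$, so the generalization goes through essentially unchanged.
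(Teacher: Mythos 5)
Your proposal is correct and matches the paper's argument: the paper likewise observes that the reasoning is identical to Theorem~\ref{thm:page_time_compl} except that the per-iteration expected cost now varies with $k$ and must be summed rather than multiplied, and that the permutations $\pi_{k,\cdot}$ are introduced only to restore the nondecreasing-order labelling that Theorems~\ref{thm:full_grad_time} and~\ref{thm:bath_diff} assume. Your explicit justification that each subroutine call is a self-contained synchronization barrier — so that only the bounds $\{\tau_i^k\}$ in force during iteration $k$ matter — is a useful clarification of a point the paper leaves implicit, but it is the same decomposition and the same ingredients.
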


\begin{remark}
    The theorem can be trivially extended to other samplings by changing $K_{\algname{PAGE}}$ to the iteration complexities from Theorems~\ref{thm:page_iter_nice} and \ref{thm:page_iter_import}.
\end{remark}

\begin{proof}
    The reasoning behind this result is exactly the same as in the proof of Theorem \ref{thm:page_time_compl}. The only difference is that in this more general setting, the expected time per iteration varies across iterations. Therefore, instead of simply multiplying, one needs to sum over the iterations to obtain the total time complexity. 

    We introduce the permutations to ensure that $\{\tau_{\pi_{k,i}}^k\}_{i=1}^n$ are sorted. When $\tau_{i}^k = \tau_{i},$ there is no need to introduce them because, throughout the paper, it is assumed $\tau_1\leq\ldots\leq\tau_n$ (see Section \ref{sec:intro}).
\end{proof}

\newpage

\section{Examples}\label{sec:examples_proofs}

Here we provide the proofs for the examples from Section \ref{sec:discussion}. We will use the notation
\begin{align*}
    t(S,j) \eqdef \parens{\sum_{i=1}^j \frac{1}{\tau_i}}^{-1} (S+j)
\end{align*}
for a fixed $S\in[m]$ and all $j\in[n]$.

\EXAMPLESAMEMAIN*
\begin{proof}
    First, when $\tau_j=\tau$ for all $j\in[n]$, then for any $S\in[m]$, $t(S,j)$ is minimized by taking $j = n$:
    \begin{align*}
        \Teq(S) &\eqdef \min_{j\in[n]} t(S,j) = \min_{j\in[n]} \parens{\parens{\sum_{i=1}^j \frac{1}{\tau_i}}^{-1} (S+j)} \\
        &= \min_{j\in[n]} \parens{\frac{\tau}{j} (S+j)}
        = \Theta\left(\tau \max\left\{\frac{S}{n}, 1\right\}\right).
    \end{align*}
    It remains to substitute this equality in \eqref{eq:time_comp}.

\end{proof}

\EXAMPLEINFFASTMAIN*
\begin{proof}
    The statement follows easily from the fact that for any $S\in[m]$ we have
    \begin{align*}
        \Teq(S) \eqdef \min_{j\in[n]} \parens{\parens{\sum_{i=1}^j \frac{1}{\tau_i}}^{-1} (S+j)}
        \leq \parens{\frac{1}{\tau_1}}^{-1} (S+j)
        = 0.
    \end{align*}
\end{proof}

\EXAMPLEINFSLOWMAIN*
\begin{proof}
    This follows from the fact that for any $S\in[m]$ and any $j\in[n]$ we have
    \begin{align*}
        t(S,j) \eqdef \parens{\sum_{i=1}^j \frac{1}{\tau_i}}^{-1} (S+j) = \infty.
    \end{align*}
\end{proof}

\EXAMPLESLOWMAIN*
\begin{proof}
    Suppose that $B \geq \frac{m+j_B}{\sum_{i=1}^{j_B} \frac{1}{\tau_i}}$ and fix any $k>j_B$. Then, since $\tau_j\geq B$ for all $j > j_{B}$, we have
    \begin{align*}
        \frac{k-j_B}{\sum_{i=j_B+1}^{k} \frac{1}{\tau_i}}
        \geq \frac{k-j_B}{\sum_{i=j_B+1}^{k} \frac{1}{B}}
        = B
        \geq \frac{m+j_B}{\sum_{i=1}^{j_B} \frac{1}{\tau_i}}
        \geq \frac{S+j_B}{\sum_{i=1}^{j_B} \frac{1}{\tau_i}}
        = t(S,j_B)
    \end{align*}
    for any $S\in[m]$. Rearranging and adding $\parens{S+j_B} \sum_{i=1}^{j_B} \frac{1}{\tau_i}$ to both sides of the inequality, we obtain
    \begin{align*}
        \parens{S+j_B} \parens{\sum_{i=1}^{j_B} \frac{1}{\tau_i}} + \parens{S+j_B} \parens{\sum_{i=j_B+1}^{k} \frac{1}{\tau_i}} \leq \parens{S + j_B} \parens{\sum_{i=1}^{j_B} \frac{1}{\tau_i}} + \parens{k - j_B} \parens{\sum_{i=1}^{j_B} \frac{1}{\tau_i}},
    \end{align*}
    meaning that
    \begin{align*}
        t(S,j_B) = \frac{S+j_B}{\sum_{i=1}^{j_B} \frac{1}{\tau_i}} \leq \frac{S+k}{\sum_{i=1}^{k} \frac{1}{\tau_i}} = t(S,k)
    \end{align*}
    for any $k>j_B$ and any $S\in[m]$. Therefore,
    \begin{align*}
        \min_{j\in[n]} \parens{\parens{\sum_{i=1}^j \frac{1}{\tau_i}}^{-1} (S+j)}
        = \min_{j\in[j_B]} \parens{\parens{\sum_{i=1}^j \frac{1}{\tau_i}}^{-1} (S+j)}
    \end{align*}
    for any $S\in[m]$, which proves the claim.
\end{proof}

\newpage

\section{A New Stochastic Gradient Method: \algnamelarge{Freya SGD}}
\label{sec:freya_sgd}

In this section, we introduce a new a non-variance reduced \algname{SGD} method that we call \algname{Freya SGD}. \algname{Freya SGD} is closely aligned with \algname{Rennala SGD} \citep{tyurin2023optimal}, but \algname{Freya SGD} does not require the $\sigma^2$--bounded variance assumption on stochastic gradients.

\begin{algorithm}[H]
    \caption{\algname{Freya SGD}}
    \label{algorithm:rennala_sgd}
    \begin{algorithmic}[1]
    \State \textbf{Parameters:} starting point $x^0 \in \R^d$, learning rate $\gamma > 0$, minibatch size $S$
    \For{$k = 0, 1, \ldots, K-1$}
        \State {\green $\frac{1}{S} \sum\limits_{i \in \cS^k} \nabla f_i(x^{k}) = \textnormal{\algname{ComputeBatch}}(S, x^k)$} \hfill (Alg.~\ref{algorithm:ga_asynch_batch})
        \State $x^{k+1} = x^k - \gamma \frac{1}{S} \sum\limits_{i \in \cS^k} \nabla f_i(x^{k})$
    \EndFor
    \end{algorithmic}
    {\small \hspace*{0.5cm} (note): $\cS^k$ is a set of size $\abs{\cS^k} = S$} of i.i.d. indices sampled from $[m]$ \emph{uniformly with replacement}
\end{algorithm}

\begin{assumption}
    \label{ass:local_lower_bound}
    For all $i \in [n],$ there exists $f^*_i$ such that $f_i(x) \geq f^*_i$ for all $x \in \R^d.$
\end{assumption}
We define
\begin{align*}
    \Delta^* \eqdef \frac{1}{n} \sum_{i=1}^{n} \left(f^* - f^*_i\right).
\end{align*}

\begin{theorem}
    \label{thm:sgd}
    Let Assumptions \ref{as:smooth_lower_bdd}, \ref{as:worker_smooth} and \ref{ass:local_lower_bound} hold. Choose minibatch size $S\in[m]$ and stepsize
    \begin{align*}
        \gamma = \min\left\{\frac{\sqrt{S}}{\sqrt{L L_{\max} K_{\textnormal{SGD}}}}, \frac{1}{L \left(1 - \frac{1}{S}\right)}, \frac{S \varepsilon}{4 L L_{\max}\Delta^*}\right\},
    \end{align*}
    where
    \begin{align*}
        K_{\textnormal{SGD}} \eqdef \frac{12 \delta^0 L}{\varepsilon} \max \left\{1 - \frac{1}{S}, \frac{12 L_{\max} \delta^0}{S \varepsilon}, \frac{4 L_{\max}\Delta^*}{S \varepsilon}\right\}.
    \end{align*}
    Then, the number of iterations needed by Algorithm~\ref{algorithm:rennala_sgd} to reach an $\varepsilon$-stationary point is $\ceil{K_{\textnormal{SGD}}}.$
\end{theorem}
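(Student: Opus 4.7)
The plan is a standard nonconvex \algname{SGD} analysis specialized to the finite-sum setting, replacing the usual bounded-variance assumption with an expected-smoothness bound derived from $L_i$-smoothness and the lower bounds $f_i^*$. Let $g^k \eqdef \frac{1}{S}\sum_{i\in\cS^k}\nabla f_i(x^k)$ and let $\mathbb{E}_k[\cdot]$ denote conditioning on $x^k$. Because $\cS^k$ is i.i.d. uniform with replacement, the standard variance decomposition yields
\[
\mathbb{E}_k[\|g^k\|^2] = \bigl(1-\tfrac{1}{S}\bigr)\|\nabla f(x^k)\|^2 + \tfrac{1}{S}\cdot\tfrac{1}{m}\sum_{i=1}^m\|\nabla f_i(x^k)\|^2.
\]
The key step is to bound the average of squared local gradients. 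For each $L_i$-smooth function lower bounded by $f_i^*$, the standard inequality $\|\nabla f_i(x)\|^2 \leq 2L_i(f_i(x)-f_i^*)$ together with $L_i\leq L_{\max}$ gives $\frac{1}{m}\sum_{i=1}^m\|\nabla f_i(x^k)\|^2 \leq 2L_{\max}\bigl((f(x^k)-f^*)+\Delta^*\bigr)$.

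Applying the $L$-smoothness descent lemma to $f$ and taking conditional expectation, with $D^k \eqdef f(x^k)-f^*$ and $\beta \eqdef L\gamma^2 L_{\max}/S$, I obtain
\[
\mathbb{E}_k[D^{k+1}] \leq (1+\beta)\,D^k \;-\; \gamma\Bigl(1-\tfrac{L\gamma(1-1/S)}{2}\Bigr)\|\nabla f(x^k)\|^2 \;+\; \beta\,\Delta^*.
\]
The first stepsize restriction $\gamma\leq 1/[L(1-1/S)]$ ensures the coefficient of $\|\nabla f(x^k)\|^2$ is at least $\gamma/2$. Iterating the recursion (dropping the gradient term) yields $\mathbb{E}[D^k]\leq (1+\beta)^k \delta^0 + ((1+\beta)^k-1)\Delta^*$, and the second restriction $\gamma\leq\sqrt{S/(L L_{\max}K_{\textnormal{SGD}})}$ forces $K_{\textnormal{SGD}}\beta\leq 1$, so that $(1+\beta)^k\leq e$ and hence $\mathbb{E}[D^k]\leq e\,\delta^0 + (e-1)\Delta^*$ uniformly over $k\leq K_{\textnormal{SGD}}$.

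Summing the descent inequality from $k=0$ to $K-1$, dropping $\mathbb{E}[D^K]\geq 0$, and dividing by $\gamma K/2$ bounds $\frac{1}{K}\sum_k \mathbb{E}[\|\nabla f(x^k)\|^2]$ by three contributions up to absolute constants: $\delta^0/(\gamma K)$, $\gamma L L_{\max}\delta^0/S$ coming from $\beta\sum_k\mathbb{E}[D^k]$, and $\gamma L L_{\max}\Delta^*/S$ coming from $K\beta\Delta^*$. Forcing each to be $O(\varepsilon)$ yields exactly the three conditions defining $\gamma$ in the theorem, and the corresponding lower bound on $K$ is the maximum of $\delta^0 L(1-1/S)/\varepsilon$, $L L_{\max}(\delta^0)^2/(S\varepsilon^2)$, and $L L_{\max}\delta^0\Delta^*/(S\varepsilon^2)$, matching the stated $K_{\textnormal{SGD}}$.

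The main obstacle is bookkeeping rather than conceptual: tracking absolute constants so that the $(1+\beta)^k\leq e$ inflation, the division of the $\varepsilon$ budget across the three error terms, and the interaction between the $\delta^0$ and $\Delta^*$ contributions all align with the factors $12$ and $4$ in the statement. The delicate point is that $\Delta^*$ appears both in the per-iteration bias (which forces the third stepsize restriction) and in the inflation of $\mathbb{E}[D^k]$; one must verify that the latter does not contaminate the $\delta^0$ part of $K_{\textnormal{SGD}}$, which works out because $K\beta\leq 1$ keeps the $\Delta^*$ contribution to $\beta\sum_k\mathbb{E}[D^k]$ of the same order as the piece already controlled by the third stepsize condition.
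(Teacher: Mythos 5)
Your proposal is correct and follows the same underlying analysis as the paper, but the paper does not actually write out this proof: its entire argument is a one-line citation to Corollary~1 and Proposition~3(i) of \citet{khaled2020better}, with the sampling weights $q_i = 1/n$. What you have written is precisely what unrolling that citation gives. Concretely, your bound $\frac{1}{m}\sum_i \|\nabla f_i(x)\|^2 \leq 2L_{\max}\bigl((f(x)-f^*) + \Delta^*\bigr)$ combined with the with-replacement variance decomposition is exactly the ``expected smoothness / ABC'' inequality $\mathbb{E}_k[\|g^k\|^2] \leq 2A(f(x^k)-f^*) + B\|\nabla f(x^k)\|^2 + C$ with $A = L_{\max}/S$, $B = 1-1/S$, $C = 2L_{\max}\Delta^*/S$, which is what their Proposition~3(i) establishes; and your recursion $\mathbb{E}_k[D^{k+1}] \leq (1+\beta)D^k - \tfrac{\gamma}{2}\|\nabla f(x^k)\|^2 + \beta\Delta^*$ followed by the ``$(1+\beta)^k \leq e$ provided $\beta K \leq 1$'' control and the three-way split of the $\varepsilon$ budget is the content of their Theorem~2/Corollary~1. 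So there is no conceptual gap; what remains is only the constant bookkeeping you already flagged (the factors $12$ and $4$), which must be chased through the $\varepsilon$-budget split exactly as in the cited source. One minor notational caveat you implicitly fixed: the paper's definition $\Delta^* = \frac{1}{n}\sum_{i=1}^n (f^* - f_i^*)$ should read $\frac{1}{m}\sum_{i=1}^m(f^* - f_i^*)$, and the $L$ appearing in the theorem's stepsize is the $L_-$ of Assumption~\ref{as:smooth_lower_bdd}; your derivation uses both correctly.
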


\begin{proof}
    The iteration complexity can be proved using Corollary~1 and Proposition~3 (i) of \citet{khaled2020better} (with $q_i = 1 / n$).
\end{proof}

\begin{theorem}\label{thm:rennala_sgd_indep}
    Consider the assumptions and the parameters from Theorem~\ref{thm:sgd}. Up to a constant factor, the time complexity of \algname{Freya SGD} (Algorithm~\ref{algorithm:rennala_sgd}) is at most
    \begin{align*}
        T_{\textnormal{SGD}}(S, [\tau_i]_{i=1}^n)  &\eqdef \frac{\delta^0 L_-}{\varepsilon} \parens{1 - \frac{1}{S} + \frac{L_{\max}}{\varepsilon S} \parens{\delta^0 + \Delta^*}} \times \min_{j\in[n]} \parens{\parens{\sum_{i=1}^j \frac{1}{\tau_i}}^{-1} (S+j)}
    \end{align*}
    and is minimized by choosing
    \begin{align*}
        S^* = \frac{L_{\max}}{\varepsilon} \parens{\delta^0 + \Delta^*}.
    \end{align*}
    Up to a constant factor, we get
    \begin{align*}
        T_{\textnormal{SGD}}(S^*, [\tau_i]_{i=1}^n) = \frac{\delta^0 L_-}{\varepsilon} \min_{j\in[n]} \parens{\parens{\sum_{i=1}^j \frac{1}{\tau_i}}^{-1} \parens{\frac{L_{\max}}{\varepsilon} \parens{\delta^0 + \Delta^*} + j}}.
    \end{align*}
\end{theorem}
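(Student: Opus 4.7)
My plan is to combine the iteration complexity from Theorem~\ref{thm:sgd} with the per-iteration time bound from Theorem~\ref{thm:batch}, and then to optimize the resulting expression over $S$. First, observe that
\begin{align*}
    K_{\textnormal{SGD}} = \frac{12 \delta^0 L_-}{\varepsilon} \max \left\{1 - \frac{1}{S}, \frac{12 L_{\max} \delta^0}{S \varepsilon}, \frac{4 L_{\max}\Delta^*}{S \varepsilon}\right\} = \Theta\!\left(\frac{\delta^0 L_-}{\varepsilon}\left(1 - \frac{1}{S} + \frac{L_{\max}(\delta^0 + \Delta^*)}{S\varepsilon}\right)\right),
\end{align*}
since a maximum of nonnegative terms is within a factor of the number of terms of their sum. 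Each iteration of Algorithm~\ref{algorithm:rennala_sgd} calls \algname{ComputeBatch}$(S, x^k)$, which by Theorem~\ref{thm:batch} finishes in at most $2\,\Teq(S, [\tau_i]_{i=1}^n)$ seconds deterministically. Multiplying the iteration complexity by the per-iteration time and absorbing constants gives the claimed upper bound $T_{\textnormal{SGD}}(S, [\tau_i]_{i=1}^n)$.

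For the optimization over $S$, let $C \eqdef L_{\max}(\delta^0 + \Delta^*)/\varepsilon$, so (up to constants)
\begin{align*}
    T_{\textnormal{SGD}}(S, [\tau_i]_{i=1}^n) \asymp \frac{\delta^0 L_-}{\varepsilon}\,F(S), \qquad F(S) \eqdef \Teq(S, [\tau_i]_{i=1}^n) + \frac{C}{S}\,\Teq(S, [\tau_i]_{i=1}^n).
\end{align*}
The claim is that $S^* = \lceil C \rceil$ minimizes $F$ up to a constant. The candidate value gives $F(S^*) \asymp \Teq(S^*, [\tau_i]_{i=1}^n)$, which plugged back yields the stated final complexity. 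The upper bound $F(S^*) \lesssim \Teq(C, [\tau_i]_{i=1}^n)$ is immediate because $1 + C/S^* \leq 2$.

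The main technical step is the matching lower bound, which I would establish via two monotonicity properties of $\Teq(\cdot, [\tau_i]_{i=1}^n)$. For each fixed $j$, the map $S \mapsto (\sum_{i=1}^j 1/\tau_i)^{-1}(S+j)$ is non-decreasing, and the map $S \mapsto (\sum_{i=1}^j 1/\tau_i)^{-1}(S+j)/S = (\sum_{i=1}^j 1/\tau_i)^{-1}(1 + j/S)$ is non-increasing. Taking the pointwise minimum over $j$ preserves both properties, so $\Teq(S, [\tau_i]_{i=1}^n)$ is non-decreasing in $S$ and $\Teq(S, [\tau_i]_{i=1}^n)/S$ is non-increasing in $S$. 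Hence for $S \geq S^*$, $F(S) \geq \Teq(S, [\tau_i]_{i=1}^n) \geq \Teq(S^*, [\tau_i]_{i=1}^n)$, and for $S \leq S^*$, $F(S) \geq (C/S)\,\Teq(S, [\tau_i]_{i=1}^n) \geq (S^*/S)\,\Teq(S, [\tau_i]_{i=1}^n) \geq \Teq(S^*, [\tau_i]_{i=1}^n)$ by the second monotonicity property.

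I expect the main obstacle to be the lower bound on $F$, specifically verifying the non-increasing property of $\Teq(S, [\tau_i]_{i=1}^n)/S$; once this is in place, the rest of the argument reduces to a two-case comparison. Rounding $S^*$ to an integer and handling the edge cases $C \leq 1$ or $C > m$ are minor, since the $-1/S$ correction in $K_{\textnormal{SGD}}$ only affects the analysis by a constant factor.
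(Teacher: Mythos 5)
Your proof is correct and follows essentially the same approach as the paper: multiply the iteration complexity from Theorem~\ref{thm:sgd} by the per-iteration time bound from Theorem~\ref{thm:batch}, then optimize over $S$ by splitting at $S^*$; your two monotonicity observations about $\Teq(S)$ and $\Teq(S)/S$ are a clean abstraction of the inline $\min_j$ manipulation that the paper's proof performs in its $1 < S \leq S^*$ case. One small correction: since $S^* = \ceil{C} \geq C$ you have $C/S \leq S^*/S$ rather than $\geq$ --- the chain still goes through up to a factor of $2$ once $C \geq 1$, and the edge case $C < 1$ (which you defer) is dispatched in the paper by noting that $\norm{\nabla f(x^0)}^2 \leq 2 L_{\max}\delta^0 \leq \varepsilon$, so $x^0$ is already an $\varepsilon$-stationary point and any $S$ works.
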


\begin{proof}
    At each iteration, the algorithm needs to collect a minibatch of stochastic gradients of size $S$. Multiplying the iteration complexity of Theorem~\ref{thm:sgd} by the time needed to gather such a minibatch (Theorem~\ref{thm:batch}), the resulting time complexity is
    \begin{align*}
        T_{\textnormal{SGD}}(S, [\tau_i]_{i=1}^n) &= \frac{\delta^0 L_-}{\varepsilon} \parens{1 - \frac{1}{S} + \frac{L_{\max}}{\varepsilon S} \parens{\delta^0 + \Delta^*}} \times \min_{j\in[n]} \parens{\parens{\sum_{i=1}^j \frac{1}{\tau_i}}^{-1} (S+j)}.
    \end{align*}
    
        We now find the optimal $S$. Assume first that $\frac{L_{\max}}{\varepsilon} \parens{\delta^0 + \Delta^*} \leq \frac{1}{2}$. Assumption~\ref{as:worker_smooth} ensures that the function $f$ is $L_{\max}$-smooth. Thus, we have $\norm{\nabla f(x^0)}^2 \leq 2L_{\max} \delta^0 \leq \varepsilon,$
        and $x^0$ is an $\varepsilon$-solution. Therefore, we can take any $S \geq 1.$

        Now, suppose that $\frac{L_{\max}}{\varepsilon} \parens{\delta^0 + \Delta^*} > \frac{1}{2}$. Then we have
        \begin{align*}
            T_{\textnormal{SGD}}(\nicefrac{L_{\max}}{\varepsilon} \parens{\delta^0 + \Delta^*}, [\tau_i]_{i=1}^n) &= \frac{\delta^0 L_-}{\varepsilon} \parens{2 - \frac{1}{S}} \min_{j\in[n]} \parens{\parens{\sum_{i=1}^j \frac{1}{\tau_i}}^{-1} \parens{\frac{L_{\max}}{\varepsilon} \parens{\delta^0 + \Delta^*} + j}} \\
            &\leq \frac{2\delta^0 L_-}{\varepsilon} \min_{j\in[n]} \parens{\parens{\sum_{i=1}^j \frac{1}{\tau_i}}^{-1} \parens{\frac{L_{\max}}{\varepsilon} \parens{\delta^0 + \Delta^*} + j}}.
        \end{align*}
        For all $S > \max\left\{\frac{L_{\max}}{\varepsilon} \parens{\delta^0 + \Delta^*}, 1\right\},$ we get $S \geq 2$ and hence
        \begin{align*}
            T_{\textnormal{SGD}}(S, [\tau_i]_{i=1}^n) &= \frac{\delta^0 L_-}{\varepsilon} \parens{1 - \frac{1}{S} + \frac{L_{\max}}{\varepsilon S} \parens{\delta^0 + \Delta^*}} \min_{j\in[n]} \parens{\parens{\sum_{i=1}^j \frac{1}{\tau_i}}^{-1} (S+j)} \\
            &\geq \frac{\delta^0 L_-}{2\varepsilon} \min_{j\in[n]} \parens{\parens{\sum_{i=1}^j \frac{1}{\tau_i}}^{-1} (S+j)} \\
            &\geq \frac{\delta^0 L_-}{2\varepsilon} \min_{j\in[n]} \parens{\parens{\sum_{i=1}^j \frac{1}{\tau_i}}^{-1} \parens{\frac{L_{\max}}{\varepsilon} \parens{\delta^0 + \Delta^*} + j}}.
        \end{align*}
        Let us now consider the case $1 < S \leq \max\left\{\frac{L_{\max}}{\varepsilon} \parens{\delta^0 + \Delta^*}, 1\right\}.$ We can additionally assume that $\frac{L_{\max}}{\varepsilon} \parens{\delta^0 + \Delta^*} > 1$ and $S \leq \frac{L_{\max}}{\varepsilon} \parens{\delta^0 + \Delta^*}$ (otherwise, the set $S$ that satisfies the condition $1 < S \leq \max\left\{\frac{L_{\max}}{\varepsilon} \parens{\delta^0 + \Delta^*}, 1\right\}$ is empty). We get
        \begin{align*}
            T_{\textnormal{SGD}}(S, [\tau_i]_{i=1}^n) &= \frac{\delta^0 L_-}{\varepsilon} \parens{1 - \frac{1}{S} + \frac{L_{\max}}{\varepsilon S} \parens{\delta^0 + \Delta^*}} \min_{j\in[n]} \parens{\parens{\sum_{i=1}^j \frac{1}{\tau_i}}^{-1} (S+j)} \\
            &\geq \frac{\delta^0 L_-}{\varepsilon} \parens{\frac{L_{\max}}{\varepsilon S} \parens{\delta^0 + \Delta^*}} \min_{j\in[n]} \parens{\parens{\sum_{i=1}^j \frac{1}{\tau_i}}^{-1} (S+j)} \\
            &= \frac{\delta^0 L_-}{\varepsilon} \min_{j\in[n]} \parens{\parens{\sum_{i=1}^j \frac{1}{\tau_i}}^{-1} \parens{\frac{L_{\max}}{\varepsilon} \parens{\delta^0 + \Delta^*} + j \frac{L_{\max}}{\varepsilon S} \parens{\delta^0 + \Delta^*}}} \\
            &\geq \frac{\delta^0 L_-}{\varepsilon} \min_{j\in[n]} \parens{\parens{\sum_{i=1}^j \frac{1}{\tau_i}}^{-1} \parens{\frac{L_{\max}}{\varepsilon} \parens{\delta^0 + \Delta^*} + j}}.
        \end{align*}
        Finally, for $S=1,$ we have
        \begin{align*}
            T_{\textnormal{SGD}}(S, [\tau_i]_{i=1}^n) &= \frac{\delta^0 L_- L_{\max}}{\varepsilon^2} \parens{\delta^0 + \Delta^*} \min_{j\in[n]} \parens{\parens{\sum_{i=1}^j \frac{1}{\tau_i}}^{-1} (1+j)} \\
            &= \frac{\delta^0 L_-}{\varepsilon} \min_{j\in[n]} \parens{\parens{\sum_{i=1}^j \frac{1}{\tau_i}}^{-1} \parens{\frac{L_{\max}}{\varepsilon} \parens{\delta^0 + \Delta^*} + \frac{L_{\max}}{\varepsilon} \parens{\delta^0 + \Delta^*} j}} \\
            &\geq \frac{\delta^0 L_-}{2 \varepsilon} \min_{j\in[n]} \parens{\parens{\sum_{i=1}^j \frac{1}{\tau_i}}^{-1} \parens{\frac{L_{\max}}{\varepsilon} \parens{\delta^0 + \Delta^*} + j}}
        \end{align*}
        because we assume $\frac{L_{\max}}{\varepsilon} \parens{\delta^0 + \Delta^*} > \frac{1}{2}.$ Therefore, an optimal choice is $S^* = \frac{L_{\max}}{\varepsilon} \parens{\delta^0 + \Delta^*}$.
\end{proof}

\newpage

\newpage
\section{Setup of the Experiments from Section~\ref{sec:exp_quad}}
\label{sec:exp:setup}
We consider the optimization problem \eqref{eq:problem} with nonconvex quadratic functions. The matrices and vectors defining the objective functions $f_i$ are generated using Algorithm~\ref{algorithm:matrix_generation} with $m = 10000,$ $d = 1000,$ $\lambda = 1\mathrm{e}{-6},$ and $s = 10$. The output is used to construct
\begin{align*}
    f_i(x) = \frac{1}{2} x^\top \mA_i x - b_i^\top x \quad \forall x \in \R^d, \, \forall i \in [m].
\end{align*}

\begin{algorithm}[H]
\caption{Quadratic optimization task generation}
\label{algorithm:matrix_generation}
\begin{algorithmic}[1]
\State \textbf{Parameters:} number of functions $m$, dimension $d$, regularizer $\lambda$, noise scale $s$
\For{$i = 1, \dots, m$}
\State Generate random noises $\nu_i^s = 1 + s \xi_i^s$ and $\nu_i^b = s \xi_i^b,$ i.i.d. $\xi_i^s, \xi_i^b \sim \cN(0, 1)$
\State Let $b_i = \frac{\nu_i^s}{4}(-1 + \nu_i^b, 0, \cdots, 0) \in \R^{d}$
\State Take the initial tridiagonal matrix
\[\mA_i = \frac{\nu_i^s}{4}\left( \begin{array}{cccc}
    2 & -1 & & 0\\
    -1 & \ddots & \ddots & \\
    & \ddots & \ddots & -1 \\
    0 & & -1 & 2 \end{array} \right) \in \R^{d \times d}\]
\EndFor
\State Take the mean of matrices $\mA = \frac{1}{m}\sum_{i=1}^m \mA_i$
\State Find the minimum eigenvalue $\lambda_{\min}(\mA)$
\For{$i = 1, \dots, m$}
\State Update matrix $\mA_i = \mA_i + (\lambda - \lambda_{\min}(\mA)) \mI$
\EndFor
\State Take starting point $x^0 = (\sqrt{d}, 0, \cdots, 0)$
\State \textbf{Output:} matrices $\mA_1, \cdots, \mA_m$, vectors $b_1, \cdots, b_m$, starting point $x^0$
\end{algorithmic}
\end{algorithm}

\newpage

\section{Lower bound}
\label{sec:lower_bound_appendix}

\subsection{Time multiple oracles protocol}
\label{sec:lower_bound_formal}

The classical lower bound frameworks \citep{nemirovskij1983problem, carmon2020lower, arjevani2022lower,nesterov2018lectures} are not convenient in the analysis of parallel algorithms since they are designed to estimate lower bounds on \emph{iteration complexities}. In order to obtain \emph{time complexity} lower bounds, we use the framework by \citet{tyurin2023optimal}. Let us briefly explain the main idea. A more detailed explanation can be found in \citep{tyurin2023optimal}[Sections 3-6].

We start by introducing an appropriate oracle for our setup:
\begin{align*}
  O_{\tau}\,:\, &\underbrace{\R_{\geq 0}}_{\textnormal{time}} \times \underbrace{\R^d}_{\textnormal{point}} \times \underbrace{(\R_{\geq 0} \times \R^d \times \{0, 1\})}_{\textnormal{input state}} \rightarrow \underbrace{(\R_{\geq 0} \times \R^d \times \{0, 1\})}_{\textnormal{output state}} \times \R^d
\end{align*}
\begin{align}
  \label{eq:oracle_inside_random}
  \begin{split}
      &\textnormal{such that } \qquad O_{\tau}(t, x, (s_t, s_x, s_q)) = \left\{
  \begin{aligned}
      &((t, x, 1), &0), \quad & s_q = 0, \\
      &((s_t, s_x, 1), &0), \quad & s_q = 1, t < s_t + \tau,\\
      &((0, 0, 0), & \nabla f_j(s_x)), \quad & s_q = 1, t \geq s_t + \tau,\\
  \end{aligned}
  \right.
  \end{split}
\end{align}
where $j \sim \textnormal{Uniform}([m]),$ i.e., $j$ is a random index sampled uniformly from the set $[m]$. We assume that all draws from $\textnormal{Uniform}([m])$ are i.i.d..

Next, we define the \emph{time multiple oracles protocol}, first introduced in \citep{tyurin2023optimal}.

\begin{protocol}[H]
    \caption{Time Multiple Oracles Protocol}
    \label{alg:time_multiple_oracle_protocol}
    \begin{algorithmic}[1]
    \State \textbf{Input: }function $f = \frac{1}{m} \sum_{i=1}^{m} f_i,$ oracles and distributions $(O_1, ..., O_n) \in \cO(f),$ algorithm $A~\in~\cA$
    \State $s^0_i = 0$ for all $i \in [n]$
    \For{$k = 0, \dots, \infty$}
    \State $({t^{k+1}}, {i^{k+1}}, x^k) = A^k(g^1, \dots, g^{k}),$ \hfill $\rhd \,{t^{k+1} \geq t^{k}}$
    \State $(s^{k+1}_{{i^{k+1}}}, g^{k+1}) = O_{{i^{k+1}}}({t^{k+1}}, x^k, s^{k}_{{i^{k+1}}})$ \hfill $\rhd\,s^{k+1}_j = s^{k}_j \quad \forall j \neq i^{k+1}$
    \EndFor
    \end{algorithmic}
\end{protocol}

Let us explain the behavior of the protocol. At each iteration, the algorithm $A$ returns three outputs, based on the available information/gradients: time $t^{k+1},$ the index of a worker $i^{k+1},$ and a new point $x^k.$ Depending on the current time $t^{k+1}$ and the state of the worker, three options are possible (see~\eqref{eq:oracle_inside_random}). If $s_q = 0,$ then the worker is idle. It then starts calculations at the point $x^k,$ changes the state $s_q$ from $0$ to $1,$ stores the point $x^k$ in $s_x$ (at which a new stochastic gradient should be calculated), and returns a zero vector. If $s_q = 0$ and $t^{k+1} < s_t + \tau,$ then the worker is still calculating a stochastic gradient. It does not change the state and returns a zero vector because the computation has not finished yet. If $s_q = 0$ and $t^{k+1} \geq s_t + \tau,$ the worker can finally return a stochastic gradient at $s_x$ because sufficient time has passed since the worker was idle ($s_q = 0$). Note that with this oracle, the algorithm will never receive the first stochastic gradient before time $\tau$ (assuming that all oracles have the same processing time $\tau;$ in general, we will assume that the processing times are different). 

In the setting considered in this work, there are $n$ oracles that can do calculations in parallel, and an algorithm orchestrates their work. Let the processing times of the oracles be equal to $\tau_1, \dots, \tau_n.$ A reasonable strategy would be to call each oracle with $t^k = 0,$ then to call the fastest worker with $t^k = \min_{i \in [n]} \tau_i$ to get the first stochastic gradients as soon as possible, then to call this worker again with $t^k = \min_{i \in [n]} \tau_i$ to request calculation of the next stochastic gradient, and so on.
One unusual thing about this protocol is that the algorithm controls the time. The oracle is designed to force the algorithm to increase the time; otherwise, the algorithm would not receive new information about the function.

Our goal will be to bound the complexity measure $\mathfrak{m}_{\textnormal{time}}\left(\cA, \cF\right)$, defined as
\begin{align}\label{eq:lower_compl_time}
    \begin{split}
    &\mathfrak{m}_{\textnormal{time}}\left(\cA, \cF\right) \eqdef \inf_{A \in \cA} \sup_{f \in \cF} \sup_{\{O_i\} \in \cO(f)} \inf\left\{t \geq 0\,\middle|\,\Exp{ \inf_{k \in S_t}\norm{\nabla f(x^k)}^2} \leq \varepsilon \right\}, 
    \\
    &S_t \eqdef \left\{k \in \N_0 \middle| t^{k} \leq t\right\},
\end{split}
\end{align}
where the sequences $t^k$ and $x^k$ are generated by Protocol~\ref{alg:time_multiple_oracle_protocol}. Hence, unlike the classical approach, where the lower bounds are obtained for the minimum number of iterations required to find an $\varepsilon$--stationary point, we seek to find the minimum \emph{time} needed to get an $\varepsilon$--stationary point.

We consider a standard for our setup class of functions \citep{fang2018spider}:

\begin{definition}[Function Class $\cF^m_{\delta^0, L_+}$]
  We say that $f \in \cF^m_{\delta^0, L_+}$ if it is $\delta^0$-bounded, i.e., $f(0) - \inf_{x \in \R^d} f(x) \leq \delta^0$, and
  $$f(x) = \frac{1}{m} \sum_{i=1}^{m} f_i(x),$$
  where the functions $f_i \,:\,\R^d \rightarrow \R$ are differentiable and satisfy
  $$\frac{1}{m} \sum_{i=1}^{m} \norm{\nabla f_i(x) - \nabla f_i(y)}^2 \leq L_+^2 \norm{x - y}^2 \quad \forall x, y \in \R^d.$$
  \label{def:func_class}
\end{definition}

Next, we define the class of algorithms we will analyze.

\begin{definition}[Algorithm Class $\cA_{\textnormal{zr}}$]
  \label{def:alg_class}
  Let us consider Protocol~\ref{alg:time_multiple_oracle_protocol}. We say that a sequence of mappings $A = \{A^k\}_{k=0}^{\infty}$ is a \emph{zero-respecting algorithm}, if
  \begin{enumerate}
    \item $A^k\,:\, \underbrace{\R^d \times \dots \times \R^d}_{k \textnormal{ times}} \rightarrow \R_{\geq 0} \times \N \times \R^d$ for all $k \geq 1$ and $A^0 \in \R_{\geq 0} \times \N \times \R^d.$ \label{prop:one}
    \item For all $k \geq 1$ and $g^1, \dots, g^k \in \R^d,$ $t^{k+1} \geq t^k,$
    where $t^{k+1}$ and $t^k$ are defined as $(t^{k+1}, \dots) = A^k(g^1, \dots, g^k)$ and $(t^k, \dots) = A^{k-1}(g^1, \dots, g^{k-1}).$ \label{prop:two}
    \item $\textnormal{supp} \left(x^k\right) \subseteq \bigcup_{j = 1}^k \textnormal{supp} \left(g^j\right)$ 
    for all $k \in \N_0,$ where $\textnormal{supp}(x) \eqdef \{i \in [d]\,|\,x_i \neq 0\}.$ \label{prop:four}
  \end{enumerate}
  We denote the set of all algorithms with these properties as $\cA_{\textnormal{zr}}.$
\end{definition}

In the above definition, property \ref{prop:one} defines the domain of the mappings $A^k$, and property \ref{prop:two} ensures that our algorithm does not ``cheat'' and does not ``travel into the past'': the time can only go forward (see \citep{tyurin2023optimal}[Section 4, Definition 4.1]). Property \ref{prop:four} is a standard assumption for zero-respecting algorithms \citep{arjevani2022lower} that is satisfied by virtually all algorithms, including \algname{Adam} \citep{kingma2014adam}, \algname{SGD}, \algname{PAGE} \citep{li2021page} and \algname{Asynchronous SGD}.

It remains to define an oracle class for our problem that employs oracles from \eqref{eq:oracle_inside_random}. We design oracles that emulate the real behavior of the workers.
\begin{definition}[Computation Oracle Class $\cO_{\tau_1, \dots, \tau_n}$]
  For any $f \in \cF^m_{\delta^0, L_+},$ the oracle class $\cO_{\tau_1, \dots, \tau_n}$ returns oracles $O_i = O_{\tau_i}$, $i \in [n]$, where the mappings $O_{\tau_i}$ are defined in \eqref{eq:oracle_inside_random}.
  \label{def:oracle_class}
\end{definition}

\subsection{The ``worst case'' function in the nonconvex world}
\label{sec:worst_case}

The analysis uses a standard function, commonly employed to derive lower bounds in the nonconvex regime. First, let us define
\begin{align*}
    \textnormal{prog}(x) \eqdef \max \{i \geq 0\,|\,x_i \neq 0\} \quad (x_0 \equiv 1).
\end{align*}
Our choice of the underlying function $F$ follows the construction introduced in \citet{carmon2020lower,arjevani2022lower}: for any $T \in \N,$ define
\begin{align}
    \label{eq:worst_case}
    F_T(x) \eqdef -\Psi(1) \Phi(x_1) + \sum_{i=2}^T \left[\Psi(-x_{i-1})\Phi(-x_i) - \Psi(x_{i-1})\Phi(x_i)\right],
\end{align}
where
\begin{align*}
    \Psi(x) = \begin{cases}
        0, & x \leq 1/2, \\
        \exp\left(1 - \frac{1}{(2x - 1)^2}\right), & x \geq 1/2,
    \end{cases}
    \quad\textnormal{and}\quad
    \Phi(x) = \sqrt{e} \int_{-\infty}^{x}e^{-\frac{1}{2}t^2}dt.
\end{align*}
Throughout the proof, we only rely on the following properties of the function:
\begin{lemma}[\cite{carmon2020lower, arjevani2022lower}]
    \label{lemma:worst_function}
    The function $F_T$ satisfies:
    \begin{enumerate}
        \item $F_T(0) - \inf_{x \in \R^T} F_T(x) \leq \Delta^0 T,$ where $\Delta^0 = 12.$
        \item The function $F_T$ is $l_1$--smooth, where $l_1 = 152.$
        \item For all $x \in \R^T,$ $\norm{\nabla F_T(x)}_{\infty} \leq \gamma_{\infty},$ where $\gamma_{\infty} = 23.$
        \item For all $x \in \R^T,$ $\textnormal{prog}(\nabla F_T(x)) \leq \textnormal{prog}(x) + 1.$
        \item For all $x \in \R^T,$ if $\textnormal{prog}(x) < T,$ then $\norm{\nabla F_T(x)} > 1.$
    \end{enumerate}
\end{lemma}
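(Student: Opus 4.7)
The plan is to verify each of the five claims directly from the explicit formula \eqref{eq:worst_case}, using elementary bounds on the auxiliary functions $\Psi$ and $\Phi$, essentially following the construction of \citet{carmon2020lower,arjevani2022lower}. First I would collect the following analytic facts about $\Psi$ and $\Phi$, each a direct calculus exercise: $\Psi$ is $C^\infty$, nonnegative, identically zero on $(-\infty,1/2]$ (so in particular $\Psi(0)=\Psi'(0)=\Psi''(0)=0$), satisfies $\Psi(1)=1$, is nondecreasing on $[1/2,\infty)$, and $\Psi,\Psi',\Psi''$ are uniformly bounded by explicit constants; $\Phi$ is smooth and strictly positive with $\Phi'(x)=\sqrt{e}\,e^{-x^2/2}$, so $|\Phi'|\leq\sqrt{e}$, $\Phi'(0)=\sqrt{e}$, $|\Phi|\leq\sqrt{2\pi e}$, and $|\Phi''|\leq\sqrt{e}$.

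Given these, claims (1)--(3) follow by straightforward term-by-term bounds. For (1), each of the $T$ summands in \eqref{eq:worst_case} is bounded in absolute value by $\max|\Psi|\cdot\max|\Phi|$, so $F_T(0)-\inf F_T$ is at most a universal constant times $T$, and checking the constants gives the stated $12T$. For (2), the Hessian of $F_T$ is tridiagonal because every summand couples only two consecutive coordinates; hence its spectral norm is controlled by $3\max_{i,j}|\partial^2_{ij}F_T|$, and each second partial expands into a bounded combination of products drawn from $\{\Psi,\Psi',\Psi''\}\times\{\Phi,\Phi',\Phi''\}$, yielding $l_1=152$. For (3), the explicit gradient formula
\begin{align*}
\partial_i F_T(x) = -\Psi(-x_{i-1})\Phi'(-x_i) - \Psi(x_{i-1})\Phi'(x_i) - \Psi'(-x_i)\Phi(-x_{i+1}) - \Psi'(x_i)\Phi(x_{i+1}),
\end{align*}
(with the convention $x_0\equiv 1$ and the last two terms absent when $i=T$) together with the bounds above gives $|\partial_i F_T|\leq 23$ after a numerical check.

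Claim (4) is the zero-chain property at the heart of the construction. If $x_{i-1}=x_i=0$, then every term in the gradient formula contains a factor of the form $\Psi(\pm 0)$ or $\Psi'(\pm 0)$, both of which vanish because $\Psi\equiv 0$ on $(-\infty,1/2]$. Hence whenever $\prog(x)=k$, we have $x_{i-1}=x_i=0$ for all $i\geq k+2$, so $\partial_i F_T(x)=0$ there, giving $\prog(\nabla F_T(x))\leq k+1$.

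For (5), set $i^\star=\prog(x)+1\leq T$. Since $x_{i^\star}=0$, the last two summands of $\partial_{i^\star}F_T(x)$ vanish via $\Psi'(0)=0$, leaving $\partial_{i^\star}F_T(x)=-\sqrt{e}\bigl[\Psi(-x_{i^\star-1})+\Psi(x_{i^\star-1})\bigr]$. When $i^\star=1$, the convention $x_0\equiv 1$ makes this equal to $-\sqrt{e}\,\Psi(1)=-\sqrt{e}$, whose magnitude already exceeds $1$. When $i^\star\geq 2$, I would split on the magnitude of $x_{i^\star-1}$: if $|x_{i^\star-1}|\geq 1$, the monotonicity of $\Psi$ on $[1/2,\infty)$ with $\Psi(1)=1$ gives $\Psi(x_{i^\star-1})+\Psi(-x_{i^\star-1})\geq 1$ and hence $|\partial_{i^\star}F_T|\geq\sqrt{e}>1$; if $|x_{i^\star-1}|<1$, I would additionally examine the neighboring partial $\partial_{i^\star-1}F_T$, which after $x_{i^\star}=0$ simplifies enough that a short case analysis (further splitting on whether $|x_{i^\star-1}|<1/2$ or $1/2\leq|x_{i^\star-1}|<1$) exhibits at least one coordinate of $\nabla F_T(x)$ with absolute value exceeding $1$, exploiting the quantitative behavior of $\Psi$ near its transition point $1/2$. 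The main obstacle is exactly this last case analysis: the naive bound using only $\partial_{i^\star}F_T$ fails when $|x_{i^\star-1}|$ is small, so one must also look at an auxiliary coordinate and exploit specifics of $\Psi$ near $1/2$. The other four claims are essentially mechanical once the elementary estimates on $\Psi$, $\Phi$ and their derivatives are in place.
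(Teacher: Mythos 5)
The paper itself does not prove this lemma: it is stated with a citation to \citet{carmon2020lower,arjevani2022lower} and used as a black box, so your attempt is a re-derivation rather than a reconstruction of an argument the authors wrote out. With that caveat, claims (1)--(4) of your plan are on solid ground. The term-by-term bounds on $\Psi,\Psi',\Psi'',\Phi,\Phi',\Phi''$ do yield constants of the claimed order, the tridiagonal structure of $\nabla^2 F_T$ is the right handle for (2), your gradient formula is correct, and the zero-chain argument for (4) (all four terms of $\partial_i F_T$ contain a factor $\Psi$ or $\Psi'$ evaluated at $\pm x_{i-1}$ or $\pm x_i$, each of which vanishes when the argument lies in $[-1/2,1/2]$) is exactly right.

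Your treatment of (5) has a genuine gap. Setting $i^\star=\prog(x)+1$ and, when $|x_{i^\star-1}|<1$, falling back to the single neighbor $\partial_{i^\star-1}F_T$ is not enough: the large partial derivative can live arbitrarily far to the left. A concrete example with $T=3$ is $x=(0.6,\,0.1,\,0)$. Then $i^\star=3$, and $\partial_3 F_T(x)=-\sqrt{e}\,\Psi(\abs{x_2})=-\sqrt{e}\,\Psi(0.1)=0$ because $\abs{x_2}<1/2$; but also $\partial_2 F_T(x)=-\Psi(0.6)\Phi'(0.1)$, and $\Psi(0.6)=e^{1-1/(0.2)^2}=e^{-24}\approx 0$, so $\abs{\partial_2 F_T(x)}\approx 0$ as well. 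Your case split on $\abs{x_{i^\star-1}}\gtrless 1/2$ together with $\partial_{i^\star-1}F_T$ cannot rescue this: one must go all the way to $\partial_1 F_T(x)$, where $\abs{\partial_1 F_T(x)}\geq\Psi(1)\Phi'(0.6)=\Phi'(0.6)>1$. The clean argument (following \citet{carmon2020lower}) is to pick $i'\eqdef\min\{j\in[T]:\abs{x_j}<1\}$ rather than $\prog(x)+1$; such an $i'$ exists because $x_T=0$. By minimality, either $i'=1$ (so $\Psi(x_{i'-1})=\Psi(1)=1$ under the convention $x_0\equiv 1$) or $\abs{x_{i'-1}}\geq 1$ (so $\Psi(\abs{x_{i'-1}})\geq\Psi(1)=1$). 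Since all four summands in $-\partial_{i'}F_T$ are nonnegative (because $\Psi,\Psi',\Phi,\Phi'\geq 0$), one can drop the last two and bound
\begin{align*}
\abs{\partial_{i'}F_T(x)} \;\geq\; \Psi(-x_{i'-1})\Phi'(-x_{i'})+\Psi(x_{i'-1})\Phi'(x_{i'}) \;\geq\; \Psi(\abs{x_{i'-1}})\cdot\Phi'(\abs{x_{i'}}) \;>\; \Psi(1)\,\Phi'(1)\;=\;1,
\end{align*}
using $\abs{x_{i'}}<1$ and $\Phi'(1)=\sqrt{e}\,e^{-1/2}=1$. This single inequality replaces your case analysis near the transition point $1/2$, which does not close the argument.
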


\subsection{The first lower bound}\label{sec:lower_bound1}

We are ready to present the main results of this section.

\begin{restatable}{theorem}{THEOREMLOWERBOUND}
  \label{theorem:lower_bound}
  Let us consider Protocol~\ref{alg:time_multiple_oracle_protocol}. Without loss of generality, assume that $0 < \tau_1 \leq \dots \leq \tau_n$ and take any $L_{+}, \delta^0, \varepsilon > 0,$ and $m \in \N$ such that $\varepsilon < c_1 L_{+} \delta^0$ and $\frac{\delta^0 L_+}{\varepsilon} > c_2 \sqrt{m}.$ Then, for any algorithm $A \in \cA_{\textnormal{zr}},$ there exists a function $f \in \cF^m_{\delta^0, L_+}$ and computation oracles $(O_1, \dots, O_n) \in \cO_{\tau_1, \dots, \tau_n}(f)$ such that $\Exp{\inf_{k \in S_t} \norm{\nabla f(x^k)}^2} > \varepsilon,$ 
  where $S_t \eqdef \left\{k \in \N_0 \,|\,t^k \leq t\right\}$ and 
$$t = c_3 \times \frac{\delta^0 L_+}{\sqrt{m} \varepsilon} \min_{j \in [n]} \left[\left(\sum_{i=1}^j \frac{1}{\tau_i}\right)^{-1} \left(m + j\right)\right].$$
The quantities $c_1,$$c_2,$ and $c_3$ are universal constants. The sequences $x^k$ and $t^k$ are defined in Protocol~\ref{alg:time_multiple_oracle_protocol}.
\end{restatable}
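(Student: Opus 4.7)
The plan is to adapt the classical chain-function lower bound template (Lemma~\ref{lemma:worst_function}) to the finite-sum asynchronous setting, combining Fang et al.'s SPIDER-style trick that exploits the averaged-smoothness Assumption~\ref{as:L+} to gain a $\sqrt{m}$ factor with a time-to-collect bound induced by Protocol~\ref{alg:time_multiple_oracle_protocol}. First I would fix an integer $T$ (to be tuned) and a uniformly random injection $\sigma:[T]\to[m]$, and set
\[
  f_{\sigma,i}(x)\;=\;m\lambda^2\!\!\sum_{j\,:\,\sigma(j)=i}\!\!T_j(x/\lambda),\qquad f_\sigma \;:=\;\tfrac{1}{m}\sum_i f_{\sigma,i}\;=\;\lambda^2 F_T(\cdot/\lambda),
\]
where $T_j$ is the $j$-th link term of $F_T$ in \eqref{eq:worst_case} and $\lambda>0$ is a scale to be set (the case $T>m$ is handled by packing several links per component, with the same scaling). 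Lemma~\ref{lemma:worst_function} gives $f_\sigma(0)-\inf f_\sigma\leq \lambda^2\Delta^0 T$, while the per-link smoothness $l_1'=O(1)$ makes each $f_{\sigma,i}$ individually $O(m)$-smooth and, crucially, yields $\tfrac{1}{m}\sum_i\|\nabla f_{\sigma,i}(x)-\nabla f_{\sigma,i}(y)\|^2=O(m)\|x-y\|^2$, so Assumption~\ref{as:L+} holds with $L_+\asymp\sqrt{m}\,l_1$. Calibrating $\lambda$ to match the target $L_+$ and to force $\|\nabla f_\sigma\|^2>\varepsilon$ whenever $\textnormal{prog}<T$ then pins down $T=\Theta(\delta^0 L_+/(\sqrt{m}\,\varepsilon))$, which is precisely the $\sqrt{m}$ gain over the non-finite-sum lower bound.

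Next I would establish a per-return progress bound. Using the zero-respecting property (Definition~\ref{def:alg_class}, property~3) and the fact that $\Psi(0)=\Psi'(0)=0$, the gradient $\nabla f_{\sigma,i}(x)$ has a component in coordinate $p{+}1$ only when $i=\sigma(p{+}1)$ and $\textnormal{prog}(x)\geq p$. Hence the frontier $\textnormal{prog}(x^k)$ advances from $p$ to $p+1$ only when some oracle returns a gradient whose i.i.d.\ uniform index $J\sim\mathrm{Unif}([m])$ (Definition~\ref{def:oracle_class}) equals $\sigma(p+1)$, evaluated at a point that already attains frontier $p$. Since $\sigma$ stays independent of the algorithm's transcript until $\sigma(p+1)$ is touched for the first time, each return can advance the frontier with conditional probability at most $1/m$. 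A standard stochastic-domination plus Chernoff argument then gives that the number of returns $N(t)$ received by time $t$ must satisfy $N(t)=\Omega(mT)$ with constant probability in order for $\textnormal{prog}(x^k)\geq T$ to be reached with constant probability; otherwise $\inf_k\|\nabla f_\sigma(x^k)\|^2>\varepsilon$ holds with constant probability.

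Finally I would translate returns into wall-clock time using the oracle semantics \eqref{eq:oracle_inside_random}: worker $i$ has produced at most $\lfloor t/\tau_i\rfloor$ returns by time $t$, so $N(t)\leq\sum_{i=1}^n\lfloor t/\tau_i\rfloor$. A short computation restricted to the fastest $j$ workers (using $\lfloor x\rfloor\leq x$) shows that $N(t)\geq S$ forces $t\geq R(j)(S+j)$ for every $j$, where $R(j):=(\sum_{i\leq j}1/\tau_i)^{-1}$; minimising over $j$ yields $t\geq c\,\Teq(S,[\tau_i]_{i=1}^n)$ for a universal $c>0$. Applying this with $S=\Theta(mT)$ and using the elementary inequality $\Teq(mT,\cdot)\geq\tfrac{T}{2}\Teq(m,\cdot)$ valid for $T\geq 1$ (obtained by bounding $R(j)(mT+j)\geq\tfrac{T}{2}R(j)(m+j)$ whenever $j\leq m$) delivers
\[
  t\;=\;\Omega\!\left(\frac{\delta^0 L_+}{\sqrt{m}\,\varepsilon}\;\Teq(m,[\tau_i]_{i=1}^n)\right).
\]
Taking expectation over $\sigma$ and invoking Yao's principle then turns this into existence of a deterministic hard instance $f\in\cF^m_{\delta^0,L_+}$ against the fixed algorithm $A\in\cA_{\textnormal{zr}}$.

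The hard part will be the per-return bound in the presence of asynchrony: Protocol~\ref{alg:time_multiple_oracle_protocol} allows many queries to be in flight at stale iterates, and returns arriving after a frontier advancement must be shown incapable of causing the \emph{next} advancement. The chain construction handles this via $\Psi(0)=\Psi'(0)=0$ — a query at a point with strictly smaller progress than the current frontier has a zero gradient in the link needed to move forward — but turning this into a clean coupling that certifies the $1/m$ per-return bound conditional on the full asynchronous trace (in particular, ruling out subtle information leakage through the $p-1$ revealed indices of $\sigma$) is where the main technical care sits. A secondary nuisance is verifying the smoothness calibration in the regime $T>m$, where several chain links must be packed per $f_{\sigma,i}$; the averaged $L_+\asymp\sqrt{m}\,l_1$ scaling persists and the final bound is unaffected, but the verification is more delicate than the $T\leq m$ case.
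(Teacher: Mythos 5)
Your construction is genuinely different from the paper's, and it is the more complicated one: you distribute the $T$ chain links across $m$ components via a random injection $\sigma$ and then need to control information leakage about $\sigma$ and invoke Yao's principle. The paper simply sets $f_1 = \frac{\sqrt m L_+ \lambda^2}{l_1}F_T(x/\lambda)$ and $f_i\equiv 0$ for $i\geq 2$, so the target index is always $j=1$, the ``information-leakage'' concern disappears, no Yao step is needed, and the $\sqrt m$ factor appears directly from the $1/m$ vs.\ $1/\sqrt m$ mismatch in verifying $L_+$ and $\delta^0$. If you can avoid the $\sigma$-construction, you should.

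The substantive gap is in the time-accounting step. You argue that reaching progress $T$ requires $N(t)=\Omega(mT)$ returns, convert this to $t\gtrsim\Teq(mT)$, and then invoke $\Teq(mT)\geq\tfrac{T}{2}\Teq(m)$. That last inequality is false in general: take $\tau_i\equiv 1$ and $n=m^2$. Then
\begin{align*}
\Teq(m) &= \min_{j\in[n]}\tfrac{m+j}{j}=1+\tfrac1m, \qquad
\Teq(mT)=\min_{j\in[n]}\tfrac{mT+j}{j}=1+\tfrac{T}{m},
\end{align*}
so with $T=m$ we get $\Teq(mT)=2$ while $\tfrac{T}{2}\Teq(m)=\tfrac{m+1}{2}$, which is much larger. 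Your bound $j\leq m$ on the minimizing index does not hold; when workers are plentiful, the optimal $j$ can far exceed $m$. More fundamentally, ``$N(t)\geq\Omega(mT)$'' is a necessary condition but not the binding one: the frontier advances \emph{sequentially}, and every query issued at a stale iterate (with progress below the current frontier) is wasted for the \emph{next} advance, no matter which index it returns. So total return count cannot be amortized across advances, and a lower bound on $N(t)$ alone cannot force the correct scaling $t=\Omega(T\cdot\Teq(m))$. The paper's proof handles this correctly by writing the time as $\sum_{k=1}^{T}\min_{i\in[n]}\tau_i\eta_i^k$ with \emph{fresh} i.i.d.\ geometric variables $\eta_i^k\sim\mathrm{Geom}(1/m)$ for each of the $T$ advances, and then applying a Chernoff bound to this sum directly (taking $t'=\tfrac18\Teq(m)$ and $s=1/t'$); the key point is that the additive structure over $T$ independent waiting periods is what yields the $T\cdot\Teq(m)$ product, and any argument that batches all $mT$ returns into a single $\Teq(mT)$ call provably undershoots when $n>m$.
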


\begin{proof}
    \textbf{(Step 1: Construction of a hard problem)}\\
    We start our proof by constructing an appropriate function $f \in \cF^m_{\delta^0, L_+}.$ Let us fix any $T \geq \N$ and define $f_i \,:\,\R^T \rightarrow \R$ such that
    \begin{align*}
        f_1 \eqdef \frac{\sqrt{m} L_+ \lambda^2}{l_1} F_T\left(\frac{x}{\lambda}\right) 
    \end{align*}
    for all $x \in \R^T,$ and $f_i(x) = 0$ for all $i \in \{2, \dots, m\}$ and $x \in \R^T.$ Essentially, all information about the function $f = \frac{1}{m} \sum_{i=1}^{m} f_i$ is in the first function. Note that
    \begin{align*}
        \frac{1}{m} \sum_{i=1}^{m} \norm{\nabla f_i(x) - \nabla f_i(y)}^2 
        &= \frac{1}{m} \norm{\nabla f_1(x) - \nabla f_1(y)}^2 \\
        &= \frac{1}{m} \norm{\frac{\sqrt{m} L_+ \lambda}{l_1} \nabla F_T\left(\frac{x}{\lambda}\right) - \frac{\sqrt{m} L_+ \lambda}{l_1} \nabla F_T\left(\frac{y}{\lambda}\right)}^2 \\
        &= \frac{L_+^2 \lambda^2}{l_1^2}  \norm{\nabla F_T\left(\frac{x}{\lambda}\right) - \nabla F_T\left(\frac{y}{\lambda}\right)}^2.
    \end{align*}
    Then, using Lemma~\ref{lemma:worst_function}, we have
    \begin{align}
        \label{eq:oZptwxozdpoFk}
        \frac{1}{m} \sum_{i=1}^{m} \norm{\nabla f_i(x) - \nabla f_i(y)}^2 
        \leq L_+^2 \lambda^2  \norm{\frac{x}{\lambda} - \frac{y}{\lambda}}^2 = L_+^2 \norm{x - y}^2.
    \end{align}
    Taking
    \begin{align*}
        T = \flr{\frac{\sqrt{m} \delta^0 l_1}{L_+ \lambda^2 \Delta^0}},
    \end{align*}
    we ensure that
    \begin{align}
        \label{eq:eqhNFkSc}
        f(0) - \inf_{x \in \R^T} f(x) &= \frac{1}{m} \left(\frac{\sqrt{m} L_+ \lambda^2}{l_1} F_T\left(0\right) - \inf_{x \in \R^T} \frac{\sqrt{m} L_+ \lambda^2}{l_1} F_T\left(x\right)\right) \nonumber\\
        &= \frac{L_+ \lambda^2}{\sqrt{m} l_1} \left(F_T\left(0\right) - \inf_{x \in \R^T} F_T\left(x\right)\right) \leq \frac{L_+ \lambda^2 \Delta^0 T}{\sqrt{m} l_1} \leq \delta^0,
    \end{align}
    where in the inequalities we use Lemma~\ref{lemma:worst_function} and the choice of $T.$ Now, inequalities \eqref{eq:oZptwxozdpoFk} and \eqref{eq:eqhNFkSc} imply that $f = \frac{1}{m} \sum_{i=1}^{m} f_i \in \cF^m_{\delta^0, L_+}$, and hence, using Lemma~\ref{lemma:worst_function} again, we get
    \begin{align*}
        \inf_{k \in S^t} \norm{\nabla f(x^k)}^2 &= \inf_{k \in S^t} \norm{\frac{1}{m} \times \frac{\sqrt{m} L_+ \lambda}{l_1} \nabla F_T\left(\frac{x^k}{\lambda}\right)}^2 \\
        &= \frac{L_+^2 \lambda^2}{m l_1^2} \inf_{k \in S^t} \norm{\nabla F_T\left(\frac{x}{\lambda}\right)}^2 > \frac{L_+^2 \lambda^2}{m l_1^2} \inf_{k \in S^t} \mathbbm{1}\left[\textnormal{prog}(x^k) < T\right].
    \end{align*}
    Let us take 
    \begin{align*}
        \lambda = \frac{2 l_1 \sqrt{m}\sqrt{\varepsilon}}{L_+}.
    \end{align*}
    Then 
    \begin{align}
        \label{eq:YfxCfV}
        \inf_{k \in S^t} \norm{\nabla f(x^k)}^2 > 4 \varepsilon \inf_{k \in S^t} \mathbbm{1}\left[\textnormal{prog}(x^k) < T\right]
    \end{align}
    and
    \begin{align*}
        T = \flr{\frac{\delta^0 L_+}{4 l_1 \Delta^0 \sqrt{m} \varepsilon}}.
    \end{align*}
    The last inequality means that while $\textnormal{prog}(x^k) < T$ for all $k \in S^t,$ all gradients are large. The function $F_T$ is a \emph{zero-chain}: due to Lemma~\ref{lemma:worst_function}, we know that $\textnormal{prog}(\nabla F_T(x)) \leq \textnormal{prog}(x) + 1$ for all $x \in \R^T.$ This implies that we can discover at most one new non-zero coordinate by calculating the gradient of the function $\nabla f_1.$ Since the algorithm $A \in \cA_{\textnormal{zr}}$ is zero-respecting, by definition it cannot return a point $x^k$ with progress greater than that of the vectors returned by the oracles. In view of this, it is necessary to calculate the gradient of $f_1$ at least $T$ times to get $\textnormal{prog}(x^k) \geq T.$

    The gradient of $f_1$ can be calculated if and only if $j = 1,$ where $j \sim \textnormal{Uniform}([m])$ (see \eqref{eq:oracle_inside_random}).
    Consider worker $i$ and define $\eta_i^1$ to be the number of draws from $\textnormal{Uniform}([m])$ until the index $j = 1$ is sampled. Clearly, $\eta_i^1$ is a Geometric random variable with parameter $\Prob{j = 1} = \frac{1}{m}.$ Recall that the workers can do the computations in parallel, and by the design of the oracles, worker $i$ needs at least $\tau_i \eta_i^1$ seconds to calculate $\eta_i^1$ stochastic gradients.
    Hence, it is impossible to calculate $\nabla f_1$ before the time
    \begin{align*}
        \min_{i \in [n]} \tau_i \eta_i^1.
    \end{align*}    
    Once the algorithm calculates $\nabla f_1$ for the first time, it needs to do so at least $T - 1$ times more to achieve $\textnormal{prog}(x^k) \geq T.$ Thus, one should wait at least 
    \begin{align*}
        \sum_{k=1}^{T} \min_{i \in [n]} \tau_i \eta_i^k
    \end{align*}
    seconds, where $\eta_i^k \overset{\textnormal{i.i.d.}}{\sim} \textnormal{Geometric}(\nicefrac{1}{m})$. We can conclude that
    \begin{align}
        \label{eq:aGAVfIUGkbWG}
        \Prob{\inf_{k \in S^t} \mathbbm{1}\left[\textnormal{prog}(x^k) < T\right] = 0} \leq \Prob{\sum_{k=1}^{T} \min_{i \in [n]} \tau_i \eta_i^k \leq t}.
    \end{align} \\
    \textbf{(Step 2: The Chernoff Method)}\\
    The theorem's proof is now reduced to the analysis of the concentration of $\sum_{k=1}^{T} \min_{i \in [n]} \tau_i \eta_i^k.$
    Using the Chernoff method, for all $s >0$, we have 
    \begin{align*}
        \Prob{\sum_{k=1}^{T} \min_{i \in [n]} \tau_i \eta_i^k \leq t} &= \Prob{- s \sum_{k=1}^{T} \min_{i \in [n]} \tau_i \eta_i^k \geq - s t} \\
        &= \Prob{e^{- s \sum_{k=1}^{T} \min_{i \in [n]} \tau_i \eta_i^k} \geq e^{- s t}} \\
        &\leq e^{s t} \Exp{\exp\left(- s \sum_{k=1}^{T} \min_{i \in [n]} \tau_i \eta_i^k\right)}.
    \end{align*}
    Independence gives
    \begin{align}
        \label{eq:IuiyqWW}
        \Prob{\sum_{k=1}^{T} \min_{i \in [n]} \tau_i \eta_i^k \leq t}
        &\leq e^{s t} \prod_{k=1}^{T} \Exp{\exp\left(- s \min_{i \in [n]} \tau_i \eta_i^k\right)} \nonumber\\
        &\overset{\textnormal{i.i.d.}}{=}  e^{s t} \left(\Exp{\exp\left(- s \min_{i \in [n]} \tau_i \eta_i^1\right)}\right)^T.
    \end{align}
    Let us consider the term in the last bracket separately. For a fixed $t' > 0$, we have
    \begin{eqnarray}
        &&\hspace{-1cm}\Exp{\exp\left(- s \min_{i \in [n]} \tau_i \eta_i^1\right)} \\
        &=& \Exp{\max_{i \in [n]}\exp\left(- s \tau_i \eta_i^1\right)} \nonumber\\
        &=& \Exp{\max_{i \in [n]} \left(\mathbbm{1}\left[\tau_i \eta_i^1 \leq t'\right] \exp\left(- s \tau_i \eta_i^1\right) + \left(1 - \mathbbm{1}\left[\tau_i \eta_i^1 \leq t'\right]\right) \exp\left(- s \tau_i \eta_i^1\right)\right)} \nonumber \\
        &\leq& \Exp{\max_{i \in [n]} \left(\mathbbm{1}\left[\tau_i \eta_i^1 \leq t'\right] + \left(1 - \mathbbm{1}\left[\tau_i \eta_i^1 \leq t'\right]\right) \exp\left(- s t'\right)\right)} \nonumber \\
        &=& \exp\left(- s t'\right) + (1 - \exp\left(- s t'\right)) \Exp{\max_{i \in [n]} \left(\mathbbm{1}\left[\tau_i \eta_i^1 \leq t'\right]\right)}.
    \label{eq:xOIbDHbliybIEmlrat}
    \end{eqnarray}

    We now consider the last term. Due to the independence, we have
    \begin{align*}
        \Exp{\max_{i \in [n]} \left(\mathbbm{1}\left[\tau_i \eta_i^1 \leq t'\right]\right)} 
        = 1 - \prod_{i=1}^n \Prob{\tau_i \eta_i^1 > t'}
        = 1 - \prod_{i=1}^n \left(1 - p\right)^{\flr{\frac{t'}{\tau_i}}},
    \end{align*}
    where we use the cumulative distribution function of a geometric random variable and temporarily define $p \eqdef \frac{1}{m}.$ Using Lemma~\ref{lemma:sum_prod}, we get
    \begin{align}
        \Exp{\max_{i \in [n]} \left(\mathbbm{1}\left[\tau_i \eta_i^1 \leq t'\right]\right)} \leq p \sum_{i=1}^{n} \flr{\frac{t'}{\tau_i}}.
        \label{eq:bNNMYRoZFUY}
    \end{align}
    Let us take 
    \begin{align*}
        t' = \frac{1}{8} \times \min_{j \in [n]} \left(\sum_{i=1}^j \frac{1}{\tau_i}\right)^{-1} \left(\frac{1}{p} + j\right) = \frac{1}{8} \times \min_{j \in [n]} g(j),
    \end{align*}
    where $g(j) \eqdef \left(\sum_{i=1}^j \frac{1}{\tau_i}\right)^{-1} \left(\frac{1}{p} + j\right)$ for all $j \in [n]$ and assume that $j^*$ is the largest index such that $\min_{j \in [n]} g(j) = g(j^*).$ Then, Lemma \ref{lemma:techn1} gives
    \begin{align}
        \label{eq:STaSs}
        \tau_{j^*} \leq \min_{j \in [n]} g(j) < \tau_{j^* + 1},
    \end{align}
    where we let $\tau_{n + 1} \equiv \infty$. Therefore, $t' < \tau_{j^* + 1}$ and \eqref{eq:bNNMYRoZFUY} gives
    \begin{align*}
        \Exp{\max_{i \in [n]} \left(\mathbbm{1}\left[\tau_i \eta_i^1 \leq t'\right]\right)} 
        &\leq p \sum_{i=1}^{n} \flr{\frac{t'}{\tau_i}} = p \sum_{i=1}^{j^*} \flr{\frac{t'}{\tau_i}}.
    \end{align*}
    Using \eqref{eq:STaSs}, we get $\frac{t'}{\tau_i} = \frac{\min_{j \in [n]} g(j)}{8 \tau_i} \geq \frac{1}{8}$ for all $i \leq j^*.$
    Since $\flr{x} \leq 2 x - \frac{1}{4}$ for all $x \geq \frac{1}{8},$ we obtain
    \begin{align*}
        \Exp{\max_{i \in [n]} \left(\mathbbm{1}\left[\tau_i \eta_i^1 \leq t'\right]\right)} 
        &\leq p \sum_{i=1}^{j^*} \left(\frac{2 t'}{\tau_i} - \frac{1}{4}\right) = 2 p t' \left(\sum_{i=1}^{j^*} \frac{1}{\tau_i}\right) - \frac{p j^*}{4} \\
        &=2 p \times \frac{1}{8} \left(\sum_{i=1}^{j^*} \frac{1}{\tau_i}\right)^{-1} \left(\frac{1}{p} + j^*\right) \times \left(\sum_{i=1}^{j^*} \frac{1}{\tau_i}\right) - \frac{p j^*}{4} \\
        &=\frac{p}{4} \left(\frac{1}{p} + j^*\right) - \frac{p j^*}{4} = \frac{1}{4}.
    \end{align*}
    Substituting the last inequality to \eqref{eq:xOIbDHbliybIEmlrat} gives
    \begin{align*}
        \Exp{\exp\left(- s \min_{i \in [n]} \tau_i \eta_i^1\right)} 
        &\leq \exp\left(- s t'\right) + \frac{1}{4} (1 - \exp\left(- s t'\right)).
    \end{align*}
    We now take $s = 1 / t'$ to obtain
    \begin{align*}
        \Exp{\exp\left(- s \min_{i \in [n]} \tau_i \eta_i^1\right)} \leq e^{-1} + \frac{1}{4} (1 - e^{-1}) \leq e^{-\frac{1}{2}}.
    \end{align*}
    Substituting this inequality in \eqref{eq:aGAVfIUGkbWG} and \eqref{eq:IuiyqWW} gives
    \begin{align*}
        \Prob{\inf_{k \in S^t} \mathbbm{1}\left[\textnormal{prog}(x^k) < T\right] = 0} \leq \Prob{\sum_{k=1}^{T} \min_{i \in [n]} \tau_i \eta_i^k \leq t} \leq e^{\frac{t}{t'} - \frac{T}{2}}.
    \end{align*}
    Therefore,
    \begin{align*}
        \Prob{\inf_{k \in S^t} \mathbbm{1}\left[\textnormal{prog}(x^k) < T\right] = 0} \leq \rho
    \end{align*}
    for all 
    \begin{align*}
        t \leq \frac{1}{8} \min_{j \in [n]} \left[\left(\sum_{i=1}^j \frac{1}{\tau_i}\right)^{-1} \left(\frac{1}{p} + j\right)\right] \left(\frac{T}{2} + \log \rho\right)
    \end{align*}
    and $\rho > 0.$ Using the bound on the probability with $\rho = \frac{1}{2}$ and \eqref{eq:YfxCfV}, we finally conclude 
    \begin{align*}
        \Exp{\inf_{k \in S^t} \norm{\nabla f(x^k)}^2} > 4 \varepsilon \Prob{\inf_{k \in S^t} \mathbbm{1}\left[\textnormal{prog}(x^k) < T\right] = 1} \geq 2 \varepsilon
    \end{align*}
    for all
    \begin{align*}
        t 
        &\leq \frac{1}{8} \min_{j \in [n]} \left[\left(\sum_{i=1}^j \frac{1}{\tau_i}\right)^{-1} \left(\frac{1}{p} + j\right)\right] \left(\frac{T}{2} + \log \frac{1}{2}\right) \\
        &= \frac{1}{8} \min_{j \in [n]} \left[\left(\sum_{i=1}^j \frac{1}{\tau_i}\right)^{-1} \left(m + j\right)\right] \left(\frac{1}{2} \flr{\frac{\delta^0 L_+}{4 l_1 \Delta^0 \sqrt{m} \varepsilon}} + \log \frac{1}{2}\right).
    \end{align*}
    It remains to use the conditions $\varepsilon < c_1 L \delta^0$ and $\frac{\delta^0 L_+}{\varepsilon} > c_2 \sqrt{m}$ from the theorem to finish the proof.
\end{proof}

\subsection{The second lower bound}\label{sec:lower_bound2}

\begin{restatable}{theorem}{THEOREMLOWERBOUNDSECOND}
    \label{theorem:second_lower_bound}
    Let us consider Protocol~\ref{alg:time_multiple_oracle_protocol}. Without loss of generality, assume that $0 < \tau_1 \leq \dots \leq \tau_n$ and take any $L_{+}, \delta^0, \varepsilon > 0,$ and $m \in \N$ such that $\varepsilon < c_1 L_{+} \delta^0.$ Then, for any algorithm $A \in \cA_{\textnormal{zr}},$ there exists a function $f \in \cF^m_{\delta^0, L_+}$ and computation oracles $(O_1, \dots, O_n) \in \cO_{\tau_1, \dots, \tau_n}(f)$ such that $\Exp{\inf_{k \in S_t} \norm{\nabla f(x^k)}^2} > \varepsilon,$ 
    where $S_t \eqdef \left\{k \in \N_0 \,|\,t^k \leq t\right\}$ and 
  $$t = c_2 \times \min_{j \in [n]} \left(\sum_{i=1}^j \frac{1}{\tau_i}\right)^{-1} \left(m + j\right).$$
  The quantities $c_1$ and $c_2$ are universal constants. The sequences $x^k$ and $t^k$ are defined in Protocol~\ref{alg:time_multiple_oracle_protocol}.
\end{restatable}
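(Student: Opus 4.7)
The plan is to leverage the oracle protocol framework of Theorem \ref{theorem:lower_bound} but replace the single-coordinate construction with a ``coupon-type'' hard instance whose lower bound is purely deterministic. The intuition is that $\Teq(m)$ is the natural time for the $n$ parallel workers to produce $m$ gradient samples, so we want a function in $\cF^m_{\delta^0,L_+}$ where the algorithm has to touch a constant fraction of the $m$ components before it can even approximately minimize. Unlike Theorem \ref{theorem:lower_bound}, which set only $f_1$ to be nontrivial and then needed Chernoff concentration on geometric waiting times, here every $f_i$ carries a distinct piece of information.

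Concretely, I would take $d = m$ and define $f_i(x) = \tfrac{L_+}{2}\|x - \alpha e_i\|^2$ with $\alpha = \sqrt{2m\delta^0/L_+}$. Direct computation gives $\nabla f(x) = L_+(x - \bar a)$ with $\bar a = (\alpha/m)\mathbf 1$, the identities $f(0) - \inf f = \delta^0$ and $\tfrac{1}{m}\sum_i\|\nabla f_i(x) - \nabla f_i(y)\|^2 = L_+^2\|x-y\|^2$, so $f \in \cF^m_{\delta^0, L_+}$. The key property of this construction is that $\nabla f_j(y) = L_+(y-\alpha e_j)$ has support contained in $\textnormal{supp}(y) \cup \{j\}$, so an induction using the zero-respecting property (Definition \ref{def:alg_class}) yields $\textnormal{supp}(x^k) \subseteq S^k$, where $S^k$ is the random set of indices $j$ sampled by the oracles up to iteration $k$. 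Consequently $\|\nabla f(x^k)\|^2 \geq L_+^2\sum_{i\notin S^k}\bar a_i^2 = 2(m-|S^k|)L_+\delta^0/m$, and for $\varepsilon < c_1 L_+\delta^0$ with $c_1$ sufficiently small it suffices to show that $|S^k| \leq m/2$ throughout the window $\{k : t^k \leq t\}$.

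To bound $|S^k|$, I would use the deterministic observation from the oracle rules \eqref{eq:oracle_inside_random}: worker $i$ can complete at most $\lfloor t^k/\tau_i\rfloor$ gradient computations by algorithm-time $t^k$, so $|S^k| \leq \sum_i \lfloor t^k/\tau_i\rfloor$ regardless of the random indices drawn. Let $j^*$ be a minimizer defining $\Teq(m)$; by Lemma \ref{lemma:techn1}, $\tau_{j^*+1} > \Teq(m)$ (with $\tau_{n+1}=\infty$), so for $t < \Teq(m)$ every worker with $i > j^*$ has returned nothing, yielding $|S^k| \leq t\sum_{i\leq j^*} 1/\tau_i = t(m+j^*)/\Teq(m)$. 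Choosing $t = c_2 \Teq(m)$ with $c_2$ a small universal constant (e.g.\ $c_2 = 1/8$, and using $j^* \leq n \leq m$ in the regime of interest, with the case $n > m$ absorbed into $c_2$) forces $|S^k| \leq m/2$ for every $k \in S_t$, so $\inf_{k \in S_t}\|\nabla f(x^k)\|^2 > \varepsilon$ holds on every sample path, and the expectation bound follows immediately.

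The main technical point is the support-propagation induction: one must track carefully that when a worker returns $0$ (still busy) the set $S^k$ is unchanged, and when it returns $\nabla f_{j_l}(s_x)$ the support of $x^k$ grows by at most $\{j_l\}$, because $s_x$ is itself an earlier iterate with $\textnormal{supp}(s_x) \subseteq S^{k-1}$. The beauty of this argument, compared to that of Theorem \ref{theorem:lower_bound}, is that no concentration inequality is needed: the coupon-collector effect is replaced by a hard pigeonhole bound on the number of samples that can be delivered in time $c_2\Teq(m)$, which is what makes the hypothesis the mild $\varepsilon < c_1 L_+\delta^0$ rather than the stronger condition required in the first lower bound.
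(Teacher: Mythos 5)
Your hard instance $f_i(x) = \tfrac{L_+}{2}\|x - \alpha e_i\|^2$ is, up to an additive constant and a sign, exactly the paper's instance $f_i(x) = c\langle e_i,x\rangle + \tfrac{L_+}{2}\|x\|^2$ specialized to $T=m$ (with $c = -L_+\alpha = -\sqrt{2L_+m\delta^0}$), the zero-respecting support-propagation argument is the same, and the overall strategy of replacing the Chernoff argument by a deterministic pigeonhole count is the same. So the route is not genuinely different.

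There is, however, a concrete gap in the final counting step. You bound $|S^k| \leq t\sum_{i\le j^*}1/\tau_i = c_2(m+j^*)$ by discarding the floors, and to get $|S^k|\le m/2$ with a \emph{universal} $c_2$ you need $j^*\lesssim m$. Nothing forces that: $j^*$ is the minimizer of $g(j) = (\sum_{i\le j}1/\tau_i)^{-1}(m+j)$, and e.g.\ when all $\tau_i$ are equal $g$ is decreasing in $j$, so $j^* = n$, which may vastly exceed $m$. Appealing to ``$j^*\le n\le m$ in the regime of interest'' and ``absorbing $n>m$ into $c_2$'' is not legitimate because the theorem demands $c_2$ be a universal constant. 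The paper closes exactly this hole by not throwing away the floors: since Lemma~\ref{lemma:techn1} gives $t/\tau_i \ge 1/8$ for all $i\le j^*$ when $t=\tfrac18\Teq(m)$, one may use $\lfloor x\rfloor \le 2x - \tfrac14$ for $x\ge\tfrac18$, so that $\sum_{i\le j^*}\lfloor t/\tau_i\rfloor \le 2t\sum_{i\le j^*}1/\tau_i - j^*/4 = \tfrac14(m+j^*)-j^*/4 = m/4$ and the $j^*$ term cancels identically, with no assumption relating $m$ and $n$. Replacing your crude $\lfloor x\rfloor\le x$ with this inequality makes the rest of your argument correct.
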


\begin{proof}
    We use the same construction as in the proof of Theorem $2$ of \citet{li2021page}. Let us consider 
    \begin{align}\label{eq:worst_case_f2}
        f_i(x) \eqdef c \inp{v_i}{x} + \frac{L_{+}}{2} \norm{x}^2
    \end{align}
    for all $x \in \R^{T},$ where $c \in \R$ and $v_i \in \R^{T}$, $i \in [n]$ are parameters that we define later.
    Then 
    \begin{align*}
        \frac{1}{m} \sum_{i=1}^{m} \norm{\nabla f_i(x) - \nabla f_i(y)}^2 \leq L_{+}^2 \norm{x - y}^2
    \end{align*}
    for all $x, y \in \R^d$ and 
    \begin{align*}
        f(0) - \inf_{x \in \R^T} f(x) = - \inf_{x \in \R^T} \left[c \inp{\frac{1}{m} \sum_{i=1}^{m} v_i}{x} + \frac{L_{+}}{2} \norm{x}^2\right] 
        = \frac{c^2}{2 L_{+} m^2} \norm{\sum_{i=1}^{m} v_i}^2 = \delta^0,
    \end{align*}
    where we take 
    \begin{align*}
        c \eqdef \sqrt{\frac{2 L_{+} m^2\delta^0}{\norm{\sum_{i=1}^{m} v_i}^2}}.
    \end{align*}
    Thus, we have $f = \frac{1}{m} \sum_{i=1}^{m} f_i \in \cF^m_{\delta^0, L_+}.$ Now, let 
    \begin{align*}
        &v_1 = (\underbrace{1, \dots, 1}_{\frac{T}{m}}, 0, \dots, 0)^\top \in \R^T, \\
        &v_2 = (\underbrace{0, \dots, 0}_{\frac{T}{m}}, \underbrace{1, \dots, 1}_{\frac{T}{m}}, 0, \dots, 0)^\top \in \R^T, \\
        &v_3 = (\underbrace{0, \dots, 0}_{\frac{T}{m}}, \underbrace{0, \dots, 0}_{\frac{T}{m}}, \underbrace{1, \dots, 1}_{\frac{T}{m}}, 0, \dots, 0)^\top \in \R^T \\
        &\dots  \\
        &v_n = (0, \dots, 0, \underbrace{1, \dots, 1}_{\frac{T}{m}})^\top \in \R^T.
    \end{align*}
    and choose any $T \in \{s m \,|\, s \in \N\}$ (one can always take $T = m$).
    Then 
    \begin{align*}
        c = \sqrt{\frac{2 L_{+} m^2\delta^0}{T}}.
    \end{align*}
    Let us fix some time $t > 0$ to be determined later and recall that the workers can calculate the stochastic gradients $\nabla f_i(x) = c v_i + L_{+} x$ in parallel. Suppose that up to time $t$, fewer than $\frac{m}{2}$ stochastic gradients have been computed. Then, since the algorithm is a zero-respecting algorithm, it cannot have discovered more than $\frac{m}{2} \times \frac{T}{m} = \frac{T}{2}$ coordinates. Thus, at least $\frac{T}{2}$ coordinates are equal to $0$ and 
    \begin{align*}
        \norm{\nabla f(x^k)}^2 = \norm{\frac{c}{m} \sum_{i=1}^m v_i + L_{+} x^k}^2 \geq \frac{c^2}{m^2} \times \frac{T}{2}.
    \end{align*}
    for all $k \in \left\{k \in \N_0 \,|\,t^k \leq t\right\}$. Therefore, substituting our choice of $c$ and using the assumptions from the theorem, we get
    \begin{align}
        \label{eq:AzUpLSPcSM}
        \norm{\nabla f(x^k)}^2 \geq L_{+}\delta^0 > 2 \varepsilon.
    \end{align}

    It remains to find the time $t > 0.$ The workers work in parallel, so in $t$ seconds they calculate at most 
    \begin{align*}
        \sum_{i=1}^{n} \flr{\frac{t}{\tau_i}}
    \end{align*}
    stochastic gradients. Now, let
    \begin{align}
        \label{eq:KzAfFVwxs}
        t = \frac{1}{8} \min_{j \in [n]} \left(\sum_{i=1}^j \frac{1}{\tau_i}\right)^{-1} \left(m + j\right),
    \end{align}
    and define $g(j) \eqdef \left(\sum_{i=1}^j \frac{1}{\tau_i}\right)^{-1} \left(m + j\right)$ for all $j \in [n].$ Assume that $j^*$ is the largest index such that $\min_{j \in [n]} g(j) = g(j^*).$ Using Lemma~\ref{lemma:techn1}, we have
    \begin{align*}
        \sum_{i=1}^{n} \flr{\frac{t}{\tau_i}} = \sum_{i=1}^{j^*} \flr{\frac{t}{\tau_i}}.
    \end{align*}
    Since $\frac{t}{\tau_i} \geq \frac{1}{8}$ for all $i \leq j^*$ and $\flr{x} \leq 2 x - \frac{1}{4}$ for all $x \geq \frac{1}{8},$ we obtain
    \begin{align*}
        \sum_{i=1}^{n} \flr{\frac{t}{\tau_i}} \leq \sum_{i=1}^{j^*} \frac{2 t}{\tau_i} - \frac{j^*}{4} = \frac{1}{4} \left(\sum_{i=1}^{j^*} \frac{1}{\tau_i}\right)^{-1} \left(m + j^*\right) \sum_{i=1}^{j^*} \frac{1}{\tau_i} - \frac{j^*}{4} = \frac{m}{4}.
    \end{align*}
    Therefore, it is possible to calculate at most $\frac{m}{4}$ stochastic gradient in time \eqref{eq:KzAfFVwxs} and we can finally conclude that inequality \eqref{eq:AzUpLSPcSM} holds for any time that is less than or equal \eqref{eq:KzAfFVwxs}.
\end{proof}

\newpage

\section{Useful Identities and Inequalities}

\begin{lemma}[\citep{szlendak2021permutation}] \label{lemma:smoothness_consts}
    It holds that $L_- \leq L_+$, $L_- \leq \frac{1}{m} \sum_{i=1}^m L_i$ and $L_+^2 - L_-^2 \leq L_{\pm}^2 \leq L_+^2 \leq \frac{1}{m} \sum_{i=1}^m L_i^2$.
\end{lemma}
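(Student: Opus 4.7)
The plan is to handle the five inequalities using two elementary tools: the Jensen-type bound $\norm{\frac{1}{m}\sum_i a_i}^2 \leq \frac{1}{m}\sum_i \norm{a_i}^2$ (with its triangle-inequality counterpart), and the variance decomposition
\begin{equation*}
    \textstyle \frac{1}{m}\sum_{i=1}^m \norm{a_i - \bar a}^2 = \frac{1}{m}\sum_{i=1}^m \norm{a_i}^2 - \norm{\bar a}^2, \qquad \bar a \eqdef \frac{1}{m}\sum_{i=1}^m a_i.
\end{equation*}
Throughout, I apply these to the vectors $a_i(x,y) \eqdef \nabla f_i(x) - \nabla f_i(y)$, whose average is $\bar a(x,y) = \nabla f(x) - \nabla f(y)$.

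The three ``averaging'' inequalities are then immediate. For $L_- \leq L_+$, Jensen gives $\norm{\nabla f(x) - \nabla f(y)}^2 \leq \frac{1}{m}\sum_i \norm{\nabla f_i(x) - \nabla f_i(y)}^2 \leq L_+^2 \norm{x-y}^2$ by Assumption~\ref{as:L+}, exhibiting $L_+$ as an admissible smoothness constant for $f$. For $L_- \leq \frac{1}{m}\sum_i L_i$, I instead invoke the triangle inequality to get $\norm{\nabla f(x) - \nabla f(y)} \leq \frac{1}{m}\sum_i L_i \norm{x-y}$ under Assumption~\ref{as:worker_smooth}. Finally, $L_+^2 \leq \frac{1}{m}\sum_i L_i^2$ follows term-by-term by squaring the individual $L_i$-smoothness bounds and averaging.

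The two nontrivial inequalities come from applying the variance decomposition with the choice $a_i = \nabla f_i(x) - \nabla f_i(y)$:
\begin{equation*}
    \textstyle \frac{1}{m}\sum_i \norm{\nabla f_i(x) - \nabla f_i(y) - (\nabla f(x) - \nabla f(y))}^2 = \frac{1}{m}\sum_i \norm{\nabla f_i(x) - \nabla f_i(y)}^2 - \norm{\nabla f(x) - \nabla f(y)}^2.
\end{equation*}
Upper-bounding the right-hand side by $L_+^2 \norm{x-y}^2$ (dropping the non-positive term) and invoking Assumption~\ref{as:L_pm} on the left gives $L_{\pm}^2 \leq L_+^2$. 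Rearranging the same identity as $\frac{1}{m}\sum_i \norm{a_i}^2 \leq \norm{\bar a}^2 + L_{\pm}^2 \norm{x-y}^2 \leq (L_-^2 + L_{\pm}^2)\norm{x-y}^2$ shows that $L_-^2 + L_{\pm}^2$ is an admissible value of the squared constant in Assumption~\ref{as:L+}, hence $L_+^2 \leq L_-^2 + L_{\pm}^2$, i.e., $L_+^2 - L_-^2 \leq L_{\pm}^2$.

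There is essentially no serious obstacle here; every step is a one-line identity. The only subtlety is an interpretational one: one must read $L_-$, $L_+$, $L_{\pm}$ as the \emph{infima} of admissible constants in their respective assumptions, so that pointwise-in-$(x,y)$ inequalities promote to inequalities on the constants themselves. Since Assumptions~\ref{as:smooth_lower_bdd}, \ref{as:L+}, and \ref{as:L_pm} all hold uniformly in $x,y$, this promotion is automatic and requires no extra argument.
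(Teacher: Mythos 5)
Your proof is correct. A small contextual point first: the paper itself does not prove this lemma --- it is stated as a citation to \citet{szlendak2021permutation} in the ``Useful Identities and Inequalities'' section --- so there is no in-paper proof to compare against. Your argument via the bias--variance decomposition
$\frac{1}{m}\sum_i \norm{a_i - \bar a}^2 = \frac{1}{m}\sum_i \norm{a_i}^2 - \norm{\bar a}^2$
applied to $a_i = \nabla f_i(x) - \nabla f_i(y)$ is the standard and most direct route; it recovers both nontrivial bounds ($L_\pm^2 \leq L_+^2$ by dropping $-\norm{\bar a}^2$, and $L_+^2 - L_-^2 \leq L_\pm^2$ by rearranging and bounding $\norm{\bar a}^2 \leq L_-^2\norm{x-y}^2$), and the three remaining inequalities follow by Jensen, triangle inequality, and term-by-term averaging as you state. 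You are also right to flag the interpretational subtlety that the constants must be read as the tightest admissible ones for the chain of inequalities to be about the constants themselves rather than about one arbitrary admissible choice; the paper acknowledges this implicitly when it remarks after Assumption~\ref{as:L_pm} that if Assumption~\ref{as:L+} holds with $L_+$ then Assumption~\ref{as:L_pm} holds with some $L_\pm \leq L_+$. No gaps.
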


\begin{lemma}
\label{lemma:sum_prod}
Consider a sequence $q_1, \dots, q_n \in [0, 1]$. Then
\begin{align*}
    1 - \sum_{m=1}^n q_m \leq \prod_{m=1}^n \left(1  - q_m\right).
\end{align*}
\end{lemma}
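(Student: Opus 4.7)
The plan is to proceed by induction on $n$, with a small case split to handle the sign of the left-hand side. The base case $n=1$ is immediate, since the stated inequality reduces to $1-q_1 \leq 1-q_1$. For the inductive step, assume the inequality holds for $n-1$, and write
\[
\prod_{m=1}^{n}(1-q_m) = (1-q_n)\prod_{m=1}^{n-1}(1-q_m).
\]
Since each $q_m \in [0,1]$, the factor $(1-q_n)$ is non-negative, so multiplying the inductive hypothesis by it preserves the direction of the inequality (provided the hypothesis side is non-negative; see the case split below).

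The main step is then the algebraic expansion: if $1 - \sum_{m=1}^{n-1} q_m \geq 0$, then
\[
(1-q_n)\prod_{m=1}^{n-1}(1-q_m) \geq (1-q_n)\Bigl(1 - \sum_{m=1}^{n-1} q_m\Bigr) = 1 - \sum_{m=1}^{n} q_m + q_n\sum_{m=1}^{n-1} q_m \geq 1 - \sum_{m=1}^{n} q_m,
\]
where the final inequality uses $q_n \geq 0$ and $\sum_{m=1}^{n-1} q_m \geq 0$. This closes the induction in the ``interesting'' case.

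The only minor obstacle is the complementary case $\sum_{m=1}^{n-1} q_m > 1$, where the inductive hypothesis would give a vacuous (negative) lower bound and cannot be multiplied through directly. But in this case, $\sum_{m=1}^{n} q_m > 1$ as well (adding the non-negative $q_n$), so the left-hand side $1 - \sum_{m=1}^n q_m$ is strictly negative, while the right-hand side $\prod_{m=1}^n (1-q_m)$ is a product of numbers in $[0,1]$ and hence non-negative; the inequality therefore holds trivially. Combining the two cases completes the induction and the proof.
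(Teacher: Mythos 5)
Your proof is correct and takes essentially the same inductive approach as the paper, but you have introduced an unnecessary case split. You state that multiplying the inductive hypothesis by $(1-q_n)$ preserves the inequality ``provided the hypothesis side is non-negative.'' This condition is not actually required: for any real numbers $a \leq b$ and any $c \geq 0$, one always has $ca \leq cb$, regardless of the signs of $a$ and $b$. After multiplying through, the expansion
\begin{align*}
(1-q_n)\Bigl(1 - \sum_{m=1}^{n-1} q_m\Bigr) = 1 - \sum_{m=1}^{n} q_m + q_n\sum_{m=1}^{n-1} q_m \geq 1 - \sum_{m=1}^{n} q_m
\end{align*}
relies only on $q_n \geq 0$ and $\sum_{m=1}^{n-1} q_m \geq 0$, which hold unconditionally since each $q_m \in [0,1]$. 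This is exactly the paper's argument, which handles the step uniformly in a single chain. Your second case is therefore redundant: the first computation already covers it. The result is unaffected, but you should drop the parenthetical claim and the case split to avoid suggesting a restriction that does not exist.
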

\begin{proof}
We prove the result by induction. It is clearly true for $n = 1$: $1 - \sum_{m=1}^{1} q_m =\prod_{m=1}^1 \left(1  - q_m\right).$ Now, assume that that it holds for $n - 1$, meaning that
\begin{align*}
1 - \sum_{m=1}^{n-1} q_m \leq \prod_{m=1}^{n-1} \left(1  - q_m\right).
\end{align*}
Multiplying both sides of the inequality by $1 - q_n \in [0, 1]$ gives
\begin{align*}
\prod_{m=1}^{n} \left(1  - q_m\right) \geq \left(1 - q_n\right)\left(1 - \sum_{m=1}^{n-1} q_m\right) = 1 - \sum_{m=1}^{n-1} q_m - q_n + q_n \left(\sum_{m=1}^{n-1} q_m\right) \geq 1 - \sum_{m=1}^{n} q_m
\end{align*}
since $q_m \in [0, 1]$ for all $m \in [n].$
\end{proof}

\begin{theorem}
    \label{eq:simple_upper_bounds_for_eq_time}
    Let us consider the equilibrium time mapping from Definition~\ref{def:time_budget}. Then
    \begin{enumerate}
        \item $\Teq(S, [\tau_i]_{i=1}^{n}) \leq 2 \tau_n \max\left\{\frac{S}{n}, 1\right\}.$ \\ 
        \item $\Teq(S, [\tau_i]_{i=1}^{n}) \leq 2 \tau_1 \max\left\{S, 1\right\}$
    \end{enumerate}
    for all $S \geq 0,$ $\tau_i \in [0, \infty]$ for all $i \in [n],$ and $\tau_1 \leq \dots \leq \tau_n.$
\end{theorem}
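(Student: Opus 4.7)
The plan is to prove each bound by simply evaluating the minimum in the definition of $\Teq$ at a single well-chosen index $j$, which of course upper-bounds the minimum over all $j \in [n]$.

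For part 1, I would take $j = n$ in the expression
\[
\Teq(S, [\tau_i]_{i=1}^n) = \min_{j\in[n]} \parens{\parens{\sum_{i=1}^j \tfrac{1}{\tau_i}}^{-1}(S+j)}.
\]
Since $\tau_1 \leq \dots \leq \tau_n$, we have $\sum_{i=1}^n \tfrac{1}{\tau_i} \geq \tfrac{n}{\tau_n}$, so the $j=n$ term is bounded by $\tfrac{\tau_n}{n}(S + n) = \tau_n\parens{\tfrac{S}{n} + 1}$. Finishing then uses the elementary inequality $a + 1 \leq 2\max\{a, 1\}$ for $a \geq 0$ (check the two cases $a \geq 1$ and $a < 1$), applied to $a = S/n$.

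For part 2, I would instead choose $j = 1$, which gives the even simpler estimate $\tau_1(S+1)$, and again apply $S + 1 \leq 2\max\{S,1\}$.

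There is no real obstacle here: both inequalities are consequences of monotonicity of $\{\tau_i\}$ plus a one-line case split on whether the normalized batch size is above or below $1$. The only mild care needed is to allow $\tau_i \in [0,\infty]$, but the argument extends with the usual conventions $\tfrac{1}{0} = \infty$ and $\tfrac{1}{\infty} = 0$: the bound in part 2 is trivial if $\tau_1 = \infty$, and if $\tau_1 = 0$ then the $j=1$ term is already $0$, so the inequality holds vacuously. Similarly part 1 is trivial when $\tau_n = \infty$, and when $\tau_n = 0$ all $\tau_i = 0$ so the bound holds. No additional machinery is required.
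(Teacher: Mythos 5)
Your argument is correct and is precisely the natural proof. The paper actually states Theorem~\ref{eq:simple_upper_bounds_for_eq_time} in the appendix without an accompanying proof (it is treated as an easy consequence of the definition), so there is nothing to diverge from; plugging $j=n$ (resp.\ $j=1$) into the minimum, using $\sum_{i=1}^{n}\tfrac{1}{\tau_i}\geq \tfrac{n}{\tau_n}$ from monotonicity, and then invoking $a+1\leq 2\max\{a,1\}$ is exactly the intended argument, and your handling of the degenerate cases $\tau_i\in\{0,\infty\}$ is also right (the only small inaccuracy is calling the $\tau_1=0$ case ``vacuous''; it holds as $0\leq 0$, not vacuously).
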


\begin{remark}
    \label{remark:better}
    Assume that $\tau_1 = \dots = \tau_{n - 1} = \tau$ and $\tau_n \to \infty,$ then
    \begin{align*}
        \lim\limits_{\tau_n \to \infty} \Teq(S, [\tau_i]_{i=1}^{n}) = \Teq(S, [\tau_i]_{i=1}^{n - 1}) 
        = \tau\parens{\frac{S}{n - 1} + 1} \leq 2 \tau \max\left\{\frac{S}{n - 1}, 1\right\},
    \end{align*}
    \begin{align*}
        \lim\limits_{\tau_n \to \infty} 2 \tau_n \max\left\{\frac{S}{n}, 1\right\} = \infty,
    \end{align*}
    and
    \begin{align*}
        \lim\limits_{\tau_n \to \infty} 2 \tau_1 \max\left\{S, 1\right\} = 2 \tau \max\left\{S, 1\right\}.
    \end{align*}
    Thus, $\Teq(S, [\tau_i]_{i=1}^{n})$ can be arbitrarily smaller than $2 \tau_n \max\left\{\frac{S}{n}, 1\right\}$ and $2 \tau_1 \max\left\{S, 1\right\}.$ This implies that our new complexities \eqref{eq:best_time_comp} and \eqref{eq:time_comp} can be arbitrarily better than $T_{\textnormal{Soviet PAGE}}$ and $T_{\textnormal{Hero PAGE}}.$
\end{remark}

\begin{lemma}\label{lemma:techn1}
    Consider a sequence $0<\tau_1\leq\ldots\leq\tau_n$ and fix some $S>0$. For all $j \in [n]$, define $$g(j) \eqdef \left(\sum_{i=1}^j \frac{1}{\tau_i}\right)^{-1} \left(S + j\right).$$ 
    \begin{enumerate}
        \item Let $j^*_{\max}$ be the largest index such that $\min\limits_{j \in [n]} g(j) = g(j^*_{\max}).$ For $j^*_{\max} < n,$ we have
        \begin{align*}
            \min_{j \in [n]} g(j) < \tau_{(j^*_{\max} + 1)}.
        \end{align*}
        \item Let $j^*$ be any index such that $\min\limits_{j \in [n]} g(j) = g(j^*).$ For $j^* < n,$ we have
        \begin{align*}
            \min_{j \in [n]} g(j) \leq \tau_{(j^* + 1)}.
        \end{align*}
        \item Let $j^*_{\min}$ be the smallest index such that $\min\limits_{j \in [n]} g(j) = g(j^*_{\min})$. Then
        \begin{align*}
            \tau_{j^*_{\min}} < \min_{j \in [n]} g(j).
        \end{align*}
        \item Let $j^*$ be any index such that $\min\limits_{j \in [n]} g(j) = g(j^*).$ Then
        \begin{align*}
            \tau_{j^*} \leq \min_{j \in [n]} g(j).
        \end{align*}
    \end{enumerate}
\end{lemma}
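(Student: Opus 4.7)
}

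My plan is to reduce all four claims to a single pair of elementary equivalences relating the monotonicity of $g$ across consecutive indices to the comparison between $\tau_j$ and $g(\cdot)$. Write $A_j \eqdef \sum_{i=1}^j \tau_i^{-1}$, so $g(j) = (S+j)/A_j$ and $g(j+1) = (S+j+1)/(A_j + \tau_{j+1}^{-1})$. Clearing denominators in $g(j+1) \leq g(j)$ gives $A_j(S+j+1) \leq (S+j)(A_j + \tau_{j+1}^{-1})$, which simplifies to $\tau_{j+1} A_j \leq S+j$, i.e., $\tau_{j+1} \leq g(j)$. An analogous calculation shows $g(j+1) \geq \tau_{j+1} \iff g(j) \geq \tau_{j+1}$. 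Chaining these, I obtain, for every $j\in[n-1]$,
\begin{equation*}
g(j+1) \leq g(j) \iff \tau_{j+1} \leq g(j) \iff \tau_{j+1} \leq g(j+1),
\end{equation*}
together with the strict version (just replace $\leq$ by $<$ throughout, the calculation is the same). This is the only technical step; it is a short algebraic manipulation, but it is the heart of the lemma.

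Given these equivalences, the four parts become one-line deductions. For part 2, since $j^*$ is a minimizer and $j^* < n$, we have $g(j^*+1) \geq g(j^*)$, so $\tau_{j^*+1} \geq g(j^*) = \min_j g(j)$. Part 1 is the same argument with strict inequality: maximality of $j^*_{\max}$ forces $g(j^*_{\max}+1) > g(j^*_{\max})$, hence $\tau_{j^*_{\max}+1} > \min_j g(j)$. For part 4, if $j^* = 1$ then $g(1) = \tau_1(S+1) \geq \tau_1$ since $S > 0$; if $j^* \geq 2$, then $g(j^*) \leq g(j^*-1)$, and the equivalence (applied with $j$ replaced by $j^*-1$) yields $\tau_{j^*} \leq g(j^*)$. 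Part 3 mirrors this with a strict inequality: for $j^*_{\min} = 1$ we again get $g(1) = \tau_1(S+1) > \tau_1$, while for $j^*_{\min} \geq 2$, minimality of $j^*_{\min}$ forces $g(j^*_{\min}) < g(j^*_{\min}-1)$, whence $\tau_{j^*_{\min}} < g(j^*_{\min})$.

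The main (and only) obstacle is getting the algebra in the key equivalences correct and then remembering which of strict versus non-strict inequality is needed in each of the four claims; once I have the chain $g(j+1) \lessgtr g(j) \iff \tau_{j+1} \lessgtr g(j) \iff \tau_{j+1} \lessgtr g(j+1)$ (in both strict and non-strict forms), the four conclusions of the lemma follow immediately by choosing $j$ adjacent to the relevant minimizer and using definiteness of ``smallest'' versus ``largest'' minimizer to decide strictness. No nontrivial case analysis beyond the boundary cases $j^* = 1$ and $j^* = n$ is required.
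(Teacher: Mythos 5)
Your proposal is correct and takes essentially the same approach as the paper: the paper also proves each part by clearing denominators in the appropriate comparison of $g(j^*)$ with $g(j^*\pm 1)$ and simplifying to the claimed bound on $\tau_{\cdot}$, handling the boundary case $j^*_{\min}=1$ separately. The only (cosmetic) difference is that you factor out the algebra into a single reusable chain of equivalences $g(j+1)\lessgtr g(j)\iff\tau_{j+1}\lessgtr g(j)\iff\tau_{j+1}\lessgtr g(j+1)$ before specializing, while the paper redoes the same manipulation inline for each part.
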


\begin{proof}
    We first prove the first inequality.
    Suppose that $j^*_{\max} < n$. Then $g(j^*_{\max}) < g(j^*_{\max} + 1)$, meaning that
    \begin{align*}
        \left(\sum_{i=1}^{j^*_{\max}} \frac{1}{\tau_i}\right)^{-1} \left(S + j^*_{\max}\right) < \left(\sum_{i=1}^{j^*_{\max} + 1} \frac{1}{\tau_i}\right)^{-1} \left(S + j^*_{\max} + 1\right).
    \end{align*}
    This implies the following series of inequalities:
    \begin{align*}
        &\left(\sum_{i=1}^{j^*_{\max} + 1} \frac{1}{\tau_i}\right) \left(S + j^*_{\max}\right) < \left(\sum_{i=1}^{j^*_{\max}} \frac{1}{\tau_i}\right) \left(S + j^*_{\max} + 1\right) \\
        &\Leftrightarrow\quad \frac{1}{\tau_{(j^*_{\max} + 1)}} \left(S + j^*_{\max}\right) < \sum_{i=1}^{j^*_{\max}} \frac{1}{\tau_i} \\
        &\Leftrightarrow\quad \tau_{(j^*_{\max} + 1)} > \left(\sum_{i=1}^{j^*_{\max}} \frac{1}{\tau_i}\right)^{-1}\left(S + j^*_{\max}\right) = g(j^*_{\max}) = \min_{j \in [n]} g(j).
    \end{align*}
    The proof of the second inequality is the same, but with non-strict inequalities.
    To prove the third inequality, first suppose that $j^*_{\min} = 1$. Then $g(j^*_{\min}) = \tau_{j^*_{\min}} \left(S + 1\right) > \tau_{j^*_{\min}}$ as needed. On the other hand, $j^*_{\min} > 1$ implies
    \begin{align*}
        &g(j^*_{\min}) < g(j^*_{\min} - 1) \\
        &\Leftrightarrow \left(\sum_{i=1}^{j^*_{\min}} \frac{1}{\tau_i}\right)^{-1} \left(S + j^*_{\min}\right) < \left(\sum_{i=1}^{j^*_{\min} - 1} \frac{1}{\tau_i}\right)^{-1} \left(S + j^*_{\min} - 1\right) \\
        &\Leftrightarrow\quad \left(\sum_{i=1}^{j^*_{\min} - 1} \frac{1}{\tau_i}\right) \left(S + j^*_{\min}\right) < \left(\sum_{i=1}^{j^*_{\min}} \frac{1}{\tau_i}\right) \left(S + j^*_{\min} - 1\right) \\
        &\Leftrightarrow\quad \sum_{i=1}^{j^*_{\min}} \frac{1}{\tau_i} < \frac{1}{\tau_{j^*_{\min}}} \left(S + j^*_{\min}\right) \\
        &\Leftrightarrow\quad \tau_{j^*_{\min}} < \left(\sum_{i=1}^{j^*_{\min}} \frac{1}{\tau_i}\right)^{-1} \left(S + j^*_{\min}\right) = \min_{j \in [n]} g(j).
    \end{align*}
    The proof of the fourth inequality is the same, but with non-strict inequalities.
\end{proof}

\newpage

\section*{NeurIPS Paper Checklist}

\begin{enumerate}

\item {\bf Claims}
    \item[] Question: Do the main claims made in the abstract and introduction accurately reflect the paper's contributions and scope?
    \item[] Answer: \answerYes{} 
    \item[] Justification: Sections~\ref{sec:theory} and \ref{sec:lower_bound_simple}, Table~\ref{tbl:main}
    \item[] Guidelines:
    \begin{itemize}
        \item The answer NA means that the abstract and introduction do not include the claims made in the paper.
        \item The abstract and/or introduction should clearly state the claims made, including the contributions made in the paper and important assumptions and limitations. A No or NA answer to this question will not be perceived well by the reviewers. 
        \item The claims made should match theoretical and experimental results, and reflect how much the results can be expected to generalize to other settings. 
        \item It is fine to include aspirational goals as motivation as long as it is clear that these goals are not attained by the paper. 
    \end{itemize}

\item {\bf Limitations}
    \item[] Question: Does the paper discuss the limitations of the work performed by the authors?
    \item[] Answer: \answerYes{} 
    \item[] Justification: Sections~\ref{sec:design}, \ref{sec:opt_params}, and \ref{sec:lower_bound_simple}
    \item[] Guidelines:
    \begin{itemize}
        \item The answer NA means that the paper has no limitation while the answer No means that the paper has limitations, but those are not discussed in the paper. 
        \item The authors are encouraged to create a separate ''Limitations'' section in their paper.
        \item The paper should point out any strong assumptions and how robust the results are to violations of these assumptions (e.g., independence assumptions, noiseless settings, model well-specification, asymptotic approximations only holding locally). The authors should reflect on how these assumptions might be violated in practice and what the implications would be.
        \item The authors should reflect on the scope of the claims made, e.g., if the approach was only tested on a few datasets or with a few runs. In general, empirical results often depend on implicit assumptions, which should be articulated.
        \item The authors should reflect on the factors that influence the performance of the approach. For example, a facial recognition algorithm may perform poorly when image resolution is low or images are taken in low lighting. Or a speech-to-text system might not be used reliably to provide closed captions for online lectures because it fails to handle technical jargon.
        \item The authors should discuss the computational efficiency of the proposed algorithms and how they scale with dataset size.
        \item If applicable, the authors should discuss possible limitations of their approach to address problems of privacy and fairness.
        \item While the authors might fear that complete honesty about limitations might be used by reviewers as grounds for rejection, a worse outcome might be that reviewers discover limitations that aren't acknowledged in the paper. The authors should use their best judgment and recognize that individual actions in favor of transparency play an important role in developing norms that preserve the integrity of the community. Reviewers will be specifically instructed to not penalize honesty concerning limitations.
    \end{itemize}

\item {\bf Theory Assumptions and Proofs}
    \item[] Question: For each theoretical result, does the paper provide the full set of assumptions and a complete (and correct) proof?
    \item[] Answer: \answerYes{} 
    \item[] Justification: Sections~\ref{sec:assumptions}, \ref{sec:theory}, and the appendix
    \item[] Guidelines:
    \begin{itemize}
        \item The answer NA means that the paper does not include theoretical results. 
        \item All the theorems, formulas, and proofs in the paper should be numbered and cross-referenced.
        \item All assumptions should be clearly stated or referenced in the statement of any theorems.
        \item The proofs can either appear in the main paper or the supplemental material, but if they appear in the supplemental material, the authors are encouraged to provide a short proof sketch to provide intuition. 
        \item Inversely, any informal proof provided in the core of the paper should be complemented by formal proofs provided in appendix or supplemental material.
        \item Theorems and Lemmas that the proof relies upon should be properly referenced. 
    \end{itemize}

    \item {\bf Experimental Result Reproducibility}
    \item[] Question: Does the paper fully disclose all the information needed to reproduce the main experimental results of the paper to the extent that it affects the main claims and/or conclusions of the paper (regardless of whether the code and data are provided or not)?
    \item[] Answer: \answerYes{} 
    \item[] Justification: Section~\ref{sec:experiments}
    \item[] Guidelines:
    \begin{itemize}
        \item The answer NA means that the paper does not include experiments.
        \item If the paper includes experiments, a No answer to this question will not be perceived well by the reviewers: Making the paper reproducible is important, regardless of whether the code and data are provided or not.
        \item If the contribution is a dataset and/or model, the authors should describe the steps taken to make their results reproducible or verifiable. 
        \item Depending on the contribution, reproducibility can be accomplished in various ways. For example, if the contribution is a novel architecture, describing the architecture fully might suffice, or if the contribution is a specific model and empirical evaluation, it may be necessary to either make it possible for others to replicate the model with the same dataset, or provide access to the model. In general. releasing code and data is often one good way to accomplish this, but reproducibility can also be provided via detailed instructions for how to replicate the results, access to a hosted model (e.g., in the case of a large language model), releasing of a model checkpoint, or other means that are appropriate to the research performed.
        \item While NeurIPS does not require releasing code, the conference does require all submissions to provide some reasonable avenue for reproducibility, which may depend on the nature of the contribution. For example
        \begin{enumerate}
            \item If the contribution is primarily a new algorithm, the paper should make it clear how to reproduce that algorithm.
            \item If the contribution is primarily a new model architecture, the paper should describe the architecture clearly and fully.
            \item If the contribution is a new model (e.g., a large language model), then there should either be a way to access this model for reproducing the results or a way to reproduce the model (e.g., with an open-source dataset or instructions for how to construct the dataset).
            \item We recognize that reproducibility may be tricky in some cases, in which case authors are welcome to describe the particular way they provide for reproducibility. In the case of closed-source models, it may be that access to the model is limited in some way (e.g., to registered users), but it should be possible for other researchers to have some path to reproducing or verifying the results.
        \end{enumerate}
    \end{itemize}

\item {\bf Open access to data and code}
    \item[] Question: Does the paper provide open access to the data and code, with sufficient instructions to faithfully reproduce the main experimental results, as described in supplemental material?
    \item[] Answer: \answerYes{} 
    \item[] Justification: In the supplementary materials.
    \item[] Guidelines:
    \begin{itemize}
        \item The answer NA means that paper does not include experiments requiring code.
        \item Please see the NeurIPS code and data submission guidelines (\url{https://nips.cc/public/guides/CodeSubmissionPolicy}) for more details.
        \item While we encourage the release of code and data, we understand that this might not be possible, so “No” is an acceptable answer. Papers cannot be rejected simply for not including code, unless this is central to the contribution (e.g., for a new open-source benchmark).
        \item The instructions should contain the exact command and environment needed to run to reproduce the results. See the NeurIPS code and data submission guidelines (\url{https://nips.cc/public/guides/CodeSubmissionPolicy}) for more details.
        \item The authors should provide instructions on data access and preparation, including how to access the raw data, preprocessed data, intermediate data, and generated data, etc.
        \item The authors should provide scripts to reproduce all experimental results for the new proposed method and baselines. If only a subset of experiments are reproducible, they should state which ones are omitted from the script and why.
        \item At submission time, to preserve anonymity, the authors should release anonymized versions (if applicable).
        \item Providing as much information as possible in supplemental material (appended to the paper) is recommended, but including URLs to data and code is permitted.
    \end{itemize}

\item {\bf Experimental Setting/Details}
    \item[] Question: Does the paper specify all the training and test details (e.g., data splits, hyperparameters, how they were chosen, type of optimizer, etc.) necessary to understand the results?
    \item[] Answer: \answerYes{} 
    \item[] Justification: Section~\ref{sec:experiments}
    \item[] Guidelines:
    \begin{itemize}
        \item The answer NA means that the paper does not include experiments.
        \item The experimental setting should be presented in the core of the paper to a level of detail that is necessary to appreciate the results and make sense of them.
        \item The full details can be provided either with the code, in appendix, or as supplemental material.
    \end{itemize}

\item {\bf Experiment Statistical Significance}
    \item[] Question: Does the paper report error bars suitably and correctly defined or other appropriate information about the statistical significance of the experiments?
    \item[] Answer: \answerYes{} 
    \item[] Justification: We provide two plots for each algorithm to ensure the soundness of our experiments. We also calculate the variance of accuracies in Table~\ref{tbl:acc}.
    \item[] Guidelines:
    \begin{itemize}
        \item The answer NA means that the paper does not include experiments.
        \item The authors should answer ''Yes'' if the results are accompanied by error bars, confidence intervals, or statistical significance tests, at least for the experiments that support the main claims of the paper.
        \item The factors of variability that the error bars are capturing should be clearly stated (for example, train/test split, initialization, random drawing of some parameter, or overall run with given experimental conditions).
        \item The method for calculating the error bars should be explained (closed form formula, call to a library function, bootstrap, etc.)
        \item The assumptions made should be given (e.g., Normally distributed errors).
        \item It should be clear whether the error bar is the standard deviation or the standard error of the mean.
        \item It is OK to report 1-sigma error bars, but one should state it. The authors should preferably report a 2-sigma error bar than state that they have a 96\% CI, if the hypothesis of Normality of errors is not verified.
        \item For asymmetric distributions, the authors should be careful not to show in tables or figures symmetric error bars that would yield results that are out of range (e.g. negative error rates).
        \item If error bars are reported in tables or plots, The authors should explain in the text how they were calculated and reference the corresponding figures or tables in the text.
    \end{itemize}

\item {\bf Experiments Compute Resources}
    \item[] Question: For each experiment, does the paper provide sufficient information on the computer resources (type of compute workers, memory, time of execution) needed to reproduce the experiments?
    \item[] Answer: \answerYes{} 
    \item[] Justification: Section~\ref{sec:experiments}
    \item[] Guidelines:
    \begin{itemize}
        \item The answer NA means that the paper does not include experiments.
        \item The paper should indicate the type of compute workers CPU or GPU, internal cluster, or cloud provider, including relevant memory and storage.
        \item The paper should provide the amount of compute required for each of the individual experimental runs as well as estimate the total compute. 
        \item The paper should disclose whether the full research project required more compute than the experiments reported in the paper (e.g., preliminary or failed experiments that didn't make it into the paper). 
    \end{itemize}
    
\item {\bf Code Of Ethics}
    \item[] Question: Does the research conducted in the paper conform, in every respect, with the NeurIPS Code of Ethics \url{https://neurips.cc/public/EthicsGuidelines}?
    \item[] Answer: \answerYes{} 
    \item[] Justification: We have thoroughly reviewed the code of ethics and we can confidently state that our paper is fully compliant with it.
    \item[] Guidelines:
    \begin{itemize}
        \item The answer NA means that the authors have not reviewed the NeurIPS Code of Ethics.
        \item If the authors answer No, they should explain the special circumstances that require a deviation from the Code of Ethics.
        \item The authors should make sure to preserve anonymity (e.g., if there is a special consideration due to laws or regulations in their jurisdiction).
    \end{itemize}

\item {\bf Broader Impacts}
    \item[] Question: Does the paper discuss both potential positive societal impacts and negative societal impacts of the work performed?
    \item[] Answer: \answerNA{} 
    \item[] Justification: Our paper considers a mathematical topic.
    \item[] Guidelines:
    \begin{itemize}
        \item The answer NA means that there is no societal impact of the work performed.
        \item If the authors answer NA or No, they should explain why their work has no societal impact or why the paper does not address societal impact.
        \item Examples of negative societal impacts include potential malicious or unintended uses (e.g., disinformation, generating fake profiles, surveillance), fairness considerations (e.g., deployment of technologies that could make decisions that unfairly impact specific groups), privacy considerations, and security considerations.
        \item The conference expects that many papers will be foundational research and not tied to particular applications, let alone deployments. However, if there is a direct path to any negative applications, the authors should point it out. For example, it is legitimate to point out that an improvement in the quality of generative models could be used to generate deepfakes for disinformation. On the other hand, it is not needed to point out that a generic algorithm for optimizing neural networks could enable people to train models that generate Deepfakes faster.
        \item The authors should consider possible harms that could arise when the technology is being used as intended and functioning correctly, harms that could arise when the technology is being used as intended but gives incorrect results, and harms following from (intentional or unintentional) misuse of the technology.
        \item If there are negative societal impacts, the authors could also discuss possible mitigation strategies (e.g., gated release of models, providing defenses in addition to attacks, mechanisms for monitoring misuse, mechanisms to monitor how a system learns from feedback over time, improving the efficiency and accessibility of ML).
    \end{itemize}
    
\item {\bf Safeguards}
    \item[] Question: Does the paper describe safeguards that have been put in place for responsible release of data or models that have a high risk for misuse (e.g., pretrained language models, image generators, or scraped datasets)?
    \item[] Answer: \answerNA{} 
    \item[] Justification:
    \item[] Guidelines:
    \begin{itemize}
        \item The answer NA means that the paper poses no such risks.
        \item Released models that have a high risk for misuse or dual-use should be released with necessary safeguards to allow for controlled use of the model, for example by requiring that users adhere to usage guidelines or restrictions to access the model or implementing safety filters. 
        \item Datasets that have been scraped from the Internet could pose safety risks. The authors should describe how they avoided releasing unsafe images.
        \item We recognize that providing effective safeguards is challenging, and many papers do not require this, but we encourage authors to take this into account and make a best faith effort.
    \end{itemize}

\item {\bf Licenses for existing assets}
    \item[] Question: Are the creators or original owners of assets (e.g., code, data, models), used in the paper, properly credited and are the license and terms of use explicitly mentioned and properly respected?
    \item[] Answer: \answerYes{} 
    \item[] Justification: Section~\ref{sec:experiments}
    \item[] Guidelines:
    \begin{itemize}
        \item The answer NA means that the paper does not use existing assets.
        \item The authors should cite the original paper that produced the code package or dataset.
        \item The authors should state which version of the asset is used and, if possible, include a URL.
        \item The name of the license (e.g., CC-BY 4.0) should be included for each asset.
        \item For scraped data from a particular source (e.g., website), the copyright and terms of service of that source should be provided.
        \item If assets are released, the license, copyright information, and terms of use in the package should be provided. For popular datasets, \url{paperswithcode.com/datasets} has curated licenses for some datasets. Their licensing guide can help determine the license of a dataset.
        \item For existing datasets that are re-packaged, both the original license and the license of the derived asset (if it has changed) should be provided.
        \item If this information is not available online, the authors are encouraged to reach out to the asset's creators.
    \end{itemize}

\item {\bf New Assets}
    \item[] Question: Are new assets introduced in the paper well documented and is the documentation provided alongside the assets?
    \item[] Answer: \answerYes{} 
    \item[] Justification: In the supplementary materials.
    \item[] Guidelines:
    \begin{itemize}
        \item The answer NA means that the paper does not release new assets.
        \item Researchers should communicate the details of the dataset/code/model as part of their submissions via structured templates. This includes details about training, license, limitations, etc. 
        \item The paper should discuss whether and how consent was obtained from people whose asset is used.
        \item At submission time, remember to anonymize your assets (if applicable). You can either create an anonymized URL or include an anonymized zip file.
    \end{itemize}

\item {\bf Crowdsourcing and Research with Human Subjects}
    \item[] Question: For crowdsourcing experiments and research with human subjects, does the paper include the full text of instructions given to participants and screenshots, if applicable, as well as details about compensation (if any)? 
    \item[] Answer: \answerNA{} 
    \item[] Justification: 
    \item[] Guidelines:
    \begin{itemize}
        \item The answer NA means that the paper does not involve crowdsourcing nor research with human subjects.
        \item Including this information in the supplemental material is fine, but if the main contribution of the paper involves human subjects, then as much detail as possible should be included in the main paper. 
        \item According to the NeurIPS Code of Ethics, workers involved in data collection, curation, or other labor should be paid at least the minimum wage in the country of the data collector. 
    \end{itemize}

\item {\bf Institutional Review Board (IRB) Approvals or Equivalent for Research with Human Subjects}
    \item[] Question: Does the paper describe potential risks incurred by study participants, whether such risks were disclosed to the subjects, and whether Institutional Review Board (IRB) approvals (or an equivalent approval/review based on the requirements of your country or institution) were obtained?
    \item[] Answer: \answerNA{} 
    \item[] Justification: 
    \item[] Guidelines:
    \begin{itemize}
        \item The answer NA means that the paper does not involve crowdsourcing nor research with human subjects.
        \item Depending on the country in which research is conducted, IRB approval (or equivalent) may be required for any human subjects research. If you obtained IRB approval, you should clearly state this in the paper. 
        \item We recognize that the procedures for this may vary significantly between institutions and locations, and we expect authors to adhere to the NeurIPS Code of Ethics and the guidelines for their institution. 
        \item For initial submissions, do not include any information that would break anonymity (if applicable), such as the institution conducting the review.
    \end{itemize}

\end{enumerate}

\end{document}